\def\makeautorefname#1#2{\expandafter\def\csname#1autorefname\endcsname{#2}}
\def\equationautorefname~#1\null{(#1)\null}
\theoremstyle{plain}
\newtheorem{theorem}{Theorem}[section]
\newtheorem{corollary}{Corollary}[section]
\newtheorem{proposition}{Proposition}[section]
\newtheorem{lemma}{Lemma}[section]
\newtheorem{problem}{Problem}[section]
\newtheorem*{claim}{Claim}
\newtheorem{fact}{Fact}[section]
\theoremstyle{definition}
\newtheorem{definition}{Definition}[section]
\newtheorem{example}{Example}[section]
\newtheorem{notation}{Notation}[section]
\newtheorem{remark}{Remark}[section]
\newtheorem{observation}{Observation}[section]
\newtheorem{proviso}{Proviso}[section]
\let\c@observation=\c@theorem
\let\c@corollary=\c@theorem
\let\c@proposition=\c@theorem
\let\c@lemma=\c@theorem
\let\c@problem=\c@theorem
\let\c@construction=\c@theorem
\let\c@conjecture=\c@theorem
\let\c@definition=\c@theorem
\let\c@notation=\c@theorem
\let\c@notations=\c@theorem
\let\c@example=\c@theorem
\let\c@axiom=\c@theorem
\let\c@property=\c@theorem
\let\c@assumption=\c@theorem
\let\c@warning=\c@theorem
\let\c@remark=\c@theorem
\let\c@scholium=\c@theorem
\let\c@nota=\c@theorem
\let\c@equation\c@theorem
\let\c@fact\c@theorem
\numberwithin{equation}{section}
\title{On approximate classification of theories}
\author{Alexander Burka}
\date{\today}
\address{Logic Group, University of California, Berkeley, 910 Evans Hall, Berkeley, CA 94720, United States}
\email{burka@berkeley.edu}
\keywords{Model Theory, Stability, Simplicity}
\subjclass[2020]{03C45}
\begin{document}

\begin{abstract}
We propose a framework for model-theoretic classification theory in an approximate first-order setting and generalize some classical results.
\end{abstract}

\maketitle

\tableofcontents

\section{Introduction}

In its earliest forms \cite{morley}, \cite{bl}, model-theoretic stability was understood as a global property: a complete first-order theory $T$ is \emph{stable} if there exists a cardinal $\lambda \geq \aleph_0$ such that for every model $\M \models T$, $A \subset M$, and $n \in \omega$, $|A|\leq \lambda$ implies $|S_n(A,\M)|\leq \lambda$. A fundamental observation of Shelah's is that stability, even in its global formulation, is better examined locally, i.e. formula-by-formula: a partitioned formula $\phi = \phi(x,y)$ is \emph{stable} in $T$ if there exists $\lambda \geq \aleph_0$ such that for every model $\M \models T$ and $A \subset M$, if $|A|\leq \lambda$, then the space $S_\phi(A,\M)$ of complete $\phi$-types over $A$ has cardinality at most $\lambda$ \cite{ct}. Shelah proves that stability of a formula is equivalent to a number of properties: $\phi$ is stable in $T$ if and only if the $\phi$-$2$-rank $R(x=x,\phi,2)$ is finite, if and only if every complete $\phi$-type is definable over its own domain, if and only if $\phi$ does not have the order property, etc. This approach not only recovers the global formulation ($T$ is stable if and only if every partitioned formula is stable in $T$), but also lends itself to fruitful applications elsewhere, notably in extremal graph theory. Indeed, the dividing line between stable and unstable theories can be understood as a structure/randomness dichotomy in the class of complete first-order theories. Such themes were developed in parallel, albeit independently, in the extremal graph theory community, and it was not until recent years that direct connections were drawn. The celebrated stable regularity lemmas mark the first such contact: using quantitative, finitary refinements of local stability-theoretic notions, Malliaris--Shelah show that stable graphs, i.e. those whose edge relation forbids half-graphs of height $k$ for some $k$, admit $\epsilon$-regular decompositions into polynomially (in $\epsilon^{-1}$) many pieces without irregular pairs \cite{rlfsg}. As a consequence, the authors prove that the class of finite $k$-stable graphs has the Erd\H{o}s--Hajnal property: there exists $\delta = \delta_k > 0$ such that every finite $k$-stable graph $G$ contains a clique or anticlique of size $|G|^\delta$. Soon after, alternative proofs of stable regularity and stable Erd\H{o}s--Hajnal emerged that implemented \emph{bona fide} stability-theoretic arguments through the lens of pseudofinite model theory \cite{cs}, \cite{mp}. 

Stability is a sensitive property, yet the graph removal lemma suggests that applications of the method to problems in extremal combinatorics like the ones mentioned above ought to be agnostic to perturbations by negligible errors. (For an elaboration of this point, see \cite{mppw}.) To address this issue, various authors have proposed relaxations of the classical definition of stability that are robust to such perturbations but still lead to good structure theorems, at least at a generic scale. Perhaps the earliest approach to this sort of extension of classical stability theory is due to Hrushovski in his work on approximate subgroups \cite{approxsg}, where an invariant S1 ideal (e.g., the null ideal of an invariant Keisler measure) plays the role of Shelah's forking ideal. In \cite{cm}, Coregliano--Malliaris consider \emph{almost stable graphons} and establish strong connections to the Erd\H{o}s--Hajnal conjecture and approximations thereof. In the context of graded probability spaces, Chernikov--Towsner develop the notion of \emph{$\mu$-stability} towards strengthening regularity lemmas for stable graphs to hypergraphs \cite{chernikov–towsner}. Martin-Pizarro--Palacin--Wolf consider the related notion of \emph{robust stability} in the context of ultraproducts of finite groups \cite{mppw}, and the recent preprint \cite{giron} of Gir\'{o}n substantially extends this approach to arbitrary invariant S1 ideals in a general first-order setting. 

The present paper contributes to this growing body of literature from a complementary perspective. The common feature of the approaches to approximate stability in \cite{cm}, \cite{chernikov–towsner}, \cite{mppw}, and \cite{giron} is the negligibility of the set of half-graphs with respect to an invariant S1 ideal. In \cite{mppw}, for instance, a relation $R(x,y)$ definable in an ultraproduct $\frak{G}$ of finite structures is called \emph{robustly $k$-stable} if the set 
\[ \left\{ (a_0,\dots,a_{k-1},b_0,\dots,b_{k-1}) \in \frak{G}_x^k \times \frak{G}_y^k : \frak{G} \models \bigwedge_{i,j < n} R(a_i,b_j)^{\text{if } i < j} \right\} \]
of $k$-length half-graphs induced by $R$ has measure $0$ with respect to the appropriate power of the nonstandard Loeb measure on $\frak{G}$. The null ideals of such measures are S1; it is precisely this property that simplifies the concomitant forking calculus and leads to good generalizations of classical stationarity principles.

Not all ideals which occur in practice are S1: in the context of pseudofinite model theory, the ideal of definable subsets of coarse pseudofinite dimension less than $1$ is one such example \cite{approxsg}. Nonetheless, it is $\bigvee$-definable in a suitable language, and this setting represents the main case of interest in the present work.

We begin with the observation that the Stone space of a quotient Boolean algebra $B/I$ is homeomorphic to the closed subspace of $S(B)$ of \emph{$I$-wide ultrafilters}, i.e. those ultrafilters which avoid the ideal $I$. Intuitively, such ultrafilters are nonnegligible with respect to $I$, viewed as a notion of smallness in $B$. When $B = L_x(\frak{U})$ is the Boolean algebra of parametrically definable subsets (in a given context $x$) of a model $\frak{U}$ of a complete first-order theory $T$, this observation leads to the notions of a \emph{wide formula} and a \emph{wide type}.

If $I$ is $\bigvee$-definable and $\frak{U}$ is sufficiently saturated, we show that $I$ determines a functor 
\[ \M \mapsto \left(L_x(\M),I(\M)\right) \]
from the category of models of $T$ with elementary embeddings to the category of Boolean algebras with a distinguished ideal. The syntactic constructions that give rise to such functors are called \emph{schemas of $\bigvee$-definable ideals} for $T$; these objects lay the technical foundation for the rest of the paper by formalizing what it means to have a coherent assignment of an ideal $I(\M) \subset L_x(\M)$ to every model $\M$ of $T$.

Shelah's local $2$-rank naturally generalizes to this setting. In the classical setup, a definable set has rank at least $0$ if and only if it is nonempty; here, we localize to $I$ and say that a definable set $X$ has rank at least $0$ if and only if it is not in the ideal $I(\frak{M})$ for any model $\frak{M} \models T$ containing parameters over which $X$ is defined. When $I$ is $\bigvee$-definable, we obtain the following generalization of Shelah's stable formula theorem (cf. \cite{ct}, Theorem 2.2):

\begin{theorem}
Let $I$ be a schema of $\bigvee$-definable ideals for a complete theory $T$ and $\phi = \phi(x,y)$ a partitioned formula. The following are equivalent:
\begin{enumerate}[a.]
\item For every $\lambda \geq 2^{|L|}$, $\M \models T$, and $A \subset M$, if $|A| \leq \lambda$, then $\left|S^I_\phi(A,\M)\right| \leq \lambda$.
\item There exists $\lambda \geq \aleph_0$ such that for every $\M \models T$ and $A \subset M$, if $|A| \leq \lambda$, then $\left|S^I_\phi(A,\M)\right| \leq \lambda$.
\item $R^I(x=x,\phi,2) < \infty$.
\item $\Tree[I,\phi,\omega]$ is inconsistent with $T$.
\item For every $\omega$-saturated $\M \models T$ and $p \in S_\phi(\M)$, if $p$ is $I$-wide, then $p$ is definable.
\item No model $\M \models T$ contains sequences $\b{a} \in \M_x^\omega$, $\b{b} \in \M_y^{\omega}$ such that for all $i \in \omega$, $\tp(a_i/\b{b})$ is $I$-wide and for all $i$, $j\in \omega$ $\M \models \phi(a_i,b_j)$ holds iff $i < j$.
\end{enumerate}
\end{theorem}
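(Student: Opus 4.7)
The plan is to follow Shelah's classical cycle for the stable formula theorem, adapted to the $I$-wide setting, with the $\bigvee$-definability of $I$ playing the role that ordinary compactness plays in the classical proof. Specifically, I would establish $(a) \Rightarrow (b) \Rightarrow (d) \Leftrightarrow (c) \Rightarrow (f) \Rightarrow (e) \Rightarrow (a)$. The common thread is that $\bigvee$-definability makes ``$\phi(x,b)$ is $I$-wide'' a type-definable condition on $b$, so $I$-wideness is preserved under the compactness, branch-selection, and Ramsey constructions that appear throughout.

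Several of these implications are nearly classical. $(a) \Rightarrow (b)$ is immediate. $(c) \Leftrightarrow (d)$ is a K\"onig-style compactness argument on the tree of $I$-wide finite approximations, since $\Tree[I,\phi,\omega]$ is designed precisely to witness $R^I(x=x,\phi,2) = \infty$. For $(e) \Rightarrow (a)$ I would pass to an $\omega$-saturated $\mathfrak{U} \succeq \M$, extend every $p \in S^I_\phi(A,\M)$ to an $I$-wide $\phi$-type $\hat{p}$ over $\mathfrak{U}$ (using $\bigvee$-definability to preserve wideness under extension), apply (e) to obtain a $\phi$-definition $\psi(y) \in L_y(A)$, and then count: there are at most $|A| + |L| + \aleph_0 \leq \lambda$ such formulas whenever $|A| \leq \lambda \geq 2^{|L|}$.

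The three implications $(b) \Rightarrow (d)$, $(d) \Rightarrow (f)$, and $(f) \Rightarrow (e)$ carry the substantive content. For $(b) \Rightarrow (d)$ I would argue contrapositively: a consistent $\Tree[I,\phi,\omega]$ can be iterated by compactness to produce, over parameter sets of small cardinality, more pairwise inconsistent $I$-wide complete $\phi$-types than $\lambda$ can bound. For $(d) \Rightarrow (f)$, I would extract a $\phi$-indiscernible order sequence $\b{a} \in \M_x^\omega$, $\b{b} \in \M_y^\omega$ from the tree by Ramsey, with wideness of each $\tp(a_i/\b{b})$ ensured by applying the $\bigvee$-definable description of $I$ at the indiscernibility step. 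Finally, $(f) \Rightarrow (e)$ is the classical alternating construction: if a wide $p \in S_\phi(\M)$ over an $\omega$-saturated $\M$ fails to be definable, one inductively builds an order sequence witnessing $\neg$(f), maintaining $I$-wideness of $\tp(a_i/\b{b})$ at each step via $\omega$-saturation and the type-definable nature of wideness.

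The main obstacle, and what distinguishes this proof from the classical one, is the uniform preservation of $I$-wideness through all of these constructions. Classically one preserves only consistency, which is automatic along chains of definable sets; here one must preserve non-membership in $I(\mathfrak{U})$, a strictly stronger closedness condition. The $\bigvee$-definability hypothesis is designed precisely to turn wideness into a compactness-friendly property and provides the technical backbone of the argument. I expect most of the proof's effort to lie in verifying these preservation statements cleanly, especially in the Ramsey step of $(d) \Rightarrow (f)$, where a naive application of Ramsey's theorem could easily lose $I$-wideness when passing to an indiscernible subsequence.
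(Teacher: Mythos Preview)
Your cycle routes through $(f)$ where the paper does not: the paper proves $(c)\Rightarrow(e)$ directly---using finiteness of $R^I$ to find a finite $p_0\subset p$ of maximal rank, then observing that both $\{b:\phi(x,b)\in p\}$ and $\{b:\neg\phi(x,b)\in p\}$ are type-definable (via $\Tree_{p_0\wedge\phi(x,y)^t}[I,\phi,k]$) and separating them by a single formula via compactness---and handles $(f)$ separately by showing $(a)\Leftrightarrow(f)$ through Erd\H{o}s--Makkai and a Dedekind-number count.

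Two genuine problems with your route. First, for $(d)\Rightarrow(f)$ you describe extracting an order from a tree by Ramsey; that construction establishes $\neg(d)\Rightarrow\neg(f)$, i.e.\ $(f)\Rightarrow(d)$, which is the wrong direction for your cycle. The contrapositive you actually need is \emph{wide order $\Rightarrow$ tree}: bisect a length-$2^n$ wide order to get a height-$n$ wide tree, with branch wideness inherited from wideness of the $\tp(a_i/\b{b})$. Second, and more seriously, the alternating construction you propose for $(f)\Rightarrow(e)$ does not obviously preserve $I$-wideness. The $a_i$'s must realize \emph{distinct} $\phi$-types over $\b{b}$ (otherwise they all share the same pattern on each $b_j$ and no order appears), so $a_i$ can only be chosen to realize $p\upharpoonright\{b_0,\dots,b_i\}$; once later $b_j$'s are added there is no mechanism forcing $\tp_\phi(a_i/\b{b})$ to remain $I$-wide. $\bigvee$-definability makes wideness a type-definable condition, but that condition is in the parameter variables, and nothing in the construction produces a witness for it. This is exactly why the paper goes through the rank instead. (A smaller issue in $(e)\Rightarrow(a)$: definability gives the $\phi$-definition in $L_y(U)$, not $L_y(A)$, so you must arrange $|U|\le\lambda$; the paper does this by first shrinking to a small model containing $A$ and then passing to an $\omega$-saturated extension of size $\le |A|+2^{|\L|}\le\lambda$.)
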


We say that \emph{$\phi$ is $I$-stable in $T$} if any of these equivalent conditions holds. 

In \cite{approxeq}, Hrushovski considers the Fubini product $I \otimes J$ of invariant ideals, and shows that it behaves particularly well in the $\bigvee$-definable case. We extend these results and combine them with the preceding theorem to obtain a good notion of global stability modulo a schema of $\bigvee$-definable ideals:

\begin{theorem}
The following are equivalent:
\begin{enumerate}[a.]
\item Every partitioned formula $\phi = \phi(x,y)$ is $I^{\otimes l(x)}$-stable.
\item Every partitioned formula $\phi = \phi(x,y)$ with $l(x) = 1$ is $I$-stable.
\item For every $\lambda = \lambda^{|T|}$, $\M \models T$, and $A \subset M$, if $|A| \leq \lambda$ then $\left|S^I_1(A,\M)\right| \leq \lambda$.
\item For every $\lambda = \lambda^{|T|}$, $\M \models T$, $A \subset M$, and $n \in \omega \setminus\left\{0\right\}$, if $|A| \leq \lambda$, then $\left|S^I_n(A,\M)\right|\leq\lambda$.
\end{enumerate}
\end{theorem}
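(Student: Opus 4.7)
The strategy is to establish the cycle $\text{(a)} \Rightarrow \text{(b)} \Rightarrow \text{(c)} \Rightarrow \text{(d)} \Rightarrow \text{(a)}$. The implication $\text{(a)} \Rightarrow \text{(b)}$ is immediate by specialization to $l(x) = 1$, for which $I^{\otimes 1}$ coincides with $I$.

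For $\text{(b)} \Rightarrow \text{(c)}$, fix $\lambda = \lambda^{|T|}$, $\M \models T$, and $A \subseteq M$ with $|A| \leq \lambda$. Since $\lambda \geq 2^{|T|} \geq 2^{|L|}$, the previous theorem's condition (a) applies to each partitioned formula $\phi(x,y)$ with $l(x) = 1$, yielding $|S^I_\phi(A,\M)| \leq \lambda$. A complete $I$-wide $1$-type is determined by the family of its $\phi$-types as $\phi$ ranges over such partitioned formulas, and each of these $\phi$-types is itself $I$-wide: a coarsening of a wide type remains wide by downward closure of $I$. Thus $S^I_1(A,\M)$ embeds into $\prod_\phi S^I_\phi(A,\M)$, giving $|S^I_1(A,\M)| \leq \lambda^{|T|} = \lambda$.

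For $\text{(c)} \Rightarrow \text{(d)}$, I proceed by induction on $n$, with base $n = 1$ supplied by (c). For the step, consider the projection $\pi : S^I_{n+1}(A,\M) \to S^I_n(A,\M)$ forgetting the last variable; this is well-defined because the preimage of a negligible set under a coordinate projection lies in $I$. Given $q \in S^I_n(A,\M)$, realize it by $(a_1,\ldots,a_n)$ in a sufficiently saturated $\M^* \succeq \M$. Appealing to the Fubini-type decomposition established earlier in the paper for $\bigvee$-definable ideal schemas (extending Hrushovski \cite{approxeq}), each $p \in \pi^{-1}(q)$ is pinned down by an $I$-wide $1$-type over $A \cup \{a_1,\ldots,a_n\}$ in $\M^*$. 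Since this parameter set has cardinality at most $\lambda$, condition (c) bounds $|\pi^{-1}(q)| \leq \lambda$. Combined with $|S^I_n(A,\M)| \leq \lambda$ from the inductive hypothesis, we obtain $|S^I_{n+1}(A,\M)| \leq \lambda$. I expect this step to be the main obstacle, since it rests on the Fubini-compatibility of $I$-wideness, a property that leans essentially on $\bigvee$-definability.

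For $\text{(d)} \Rightarrow \text{(a)}$, fix $\phi(x,y)$ with $l(x) = n$. Any $I^{\otimes n}$-wide complete $\phi$-type $q$ over $A$ extends, via Zorn's lemma applied in the Stone space of $I^{\otimes n}$-wide ultrafilters, to a complete $I^{\otimes n}$-wide $n$-type $p$. Because $I \subseteq I^{\otimes n}$ as ideals on $n$-tuples, $p$ lies in $S^I_n(A,\M)$ as well, giving an injection $S^{I^{\otimes n}}_\phi(A,\M) \hookrightarrow S^I_n(A,\M)$. Combined with (d) this yields $|S^{I^{\otimes n}}_\phi(A,\M)| \leq \lambda$, and the previous theorem concludes that $\phi$ is $I^{\otimes n}$-stable.
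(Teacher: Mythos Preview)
Your proof follows the same cycle $\text{(a)} \Rightarrow \text{(b)} \Rightarrow \text{(c)} \Rightarrow \text{(d)} \Rightarrow \text{(a)}$ as the paper, and the arguments for each implication are essentially identical: the embedding into $\prod_\phi S^I_\phi(A)$ for $\text{(b)}\Rightarrow\text{(c)}$, the fibered count via the Fubini decomposition for $\text{(c)}\Rightarrow\text{(d)}$, and extension of wide $\phi$-types to wide $n$-types for $\text{(d)}\Rightarrow\text{(a)}$.

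One point deserves correction. In $\text{(d)}\Rightarrow\text{(a)}$ you write ``Because $I \subseteq I^{\otimes n}$ as ideals on $n$-tuples, $p$ lies in $S^I_n(A,\M)$ as well.'' This sentence is both unnecessary and confused: $I$ is an ideal in one free variable, so the inclusion $I \subseteq I^{\otimes n}$ is not meaningful as stated, and in any case the paper's notational convention (declared immediately before the theorem) is that $S^I_n(A,\M) := S^{I^{\otimes n}}_n(A,\M)$. So once you have that $p$ is $I^{\otimes n}$-wide, $p \in S^I_n(A,\M)$ holds by definition---no ideal comparison is needed. With that remark removed, your argument matches the paper's.
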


The final section of the paper develops a suitable theory of independence for wide types. Classically, a formula \emph{divides} if it is inconsistent along an indiscernible sequence in the type of its parameters. In our setting, we say that a formula \emph{$I$-divides} if it is $I$-thin (i.e., not $I$-wide) along an indiscernible sequence in the type of its parameters. Control over the lengths of uniformly $I$-dividing sequences gives rise to a good notion of approximate simplicity. In particular, we prove the following:

\begin{theorem}
The following are equivalent:
\begin{enumerate}[a.]
\item $T$ is $I$-simple: for every $\phi(x,y)$, $k \in \omega$, and $A$ there are no infinite uniformly $\phi$-$k$-$I$-dividing sequences over $A$.
\item $T$ satisfies wide local character with respect to $I$: for every $A \subset \frak{U}$, $p(x) \in S^I(A)$,\footnote{$x$ may be an arbitrary (finite) tuple of free variables.} $p(x)$ does not $I$-divide over some $A_0 \subset A$ with $|A_0|\leq|T|$.
\item For every $\omega$-saturated $\M \models T$ and $p(x) \in S^I(\M)$, $p(x)$ does not $I$-divide over some $A_0 \subset M$ with $|A_0|\leq|T|$.
\end{enumerate}
\end{theorem}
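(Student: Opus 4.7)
The plan is to establish the cycle $(a) \Rightarrow (b) \Rightarrow (c) \Rightarrow (a)$. The implication $(b) \Rightarrow (c)$ is immediate, as an $\omega$-saturated $\M$ is in particular a set of parameters over which wide types live, and the cardinality bound $|T|$ transfers without change.

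For $(a) \Rightarrow (b)$, fix $A$ and $p(x) \in S^I(A)$ and suppose toward contradiction that $p$ $I$-divides over every $A_0 \subset A$ with $|A_0| \leq |T|$. I would recursively construct an increasing chain $(A_\alpha)_{\alpha < |T|^+}$ of subsets of $A$, each of size $\leq |T|$, together with data $(\psi_\alpha(x,y_\alpha), k_\alpha, b_\alpha, (c^\alpha_i)_{i<\omega})$, where $\psi_\alpha(x,b_\alpha) \in p$, $b_\alpha \in A$, and $(c^\alpha_i)_{i<\omega}$ is an $A_\alpha$-indiscernible sequence with $c^\alpha_0 = b_\alpha$ witnessing that $\psi_\alpha(x,b_\alpha)$ $k_\alpha$-$I$-divides over $A_\alpha$; absorb each such witness into $A_{\alpha+1}$ and take unions at limits. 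By pigeonhole on $(\psi_\alpha, k_\alpha) \in L \times \omega$ (a set of size $|T|$), some fixed pair $(\phi,k)$ recurs on a cofinal subset $S \subset |T|^+$. A standard Erd\H{o}s--Rado extraction applied to the corresponding family $(b_\alpha)_{\alpha \in S}$ yields an $A$-indiscernible sequence along which $k$-wise conjunctions of $\phi(x,\cdot)$ remain $I$-thin; this is an infinite uniformly $\phi$-$k$-$I$-dividing sequence over $A$, contradicting $(a)$.

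For $(c) \Rightarrow (a)$, argue contrapositively. Suppose $(b_i)_{i<\omega}$ is an infinite uniformly $\phi$-$k$-$I$-dividing sequence over some $A$. Embed $A \cup \{b_i : i < \omega\}$ into an $\omega$-saturated model $\M$ and consider the partial type $\Sigma(x) = \{\phi(x,b_i) : i < \omega\}$. Uniform $\phi$-$k$-$I$-dividing implies that every $(k{-}1)$-wise subconjunction is $I$-wide; using the $\bigvee$-definability of $I$ (and the attendant fact, established earlier in the paper, that $I$-wide partial types extend to $I$-wide complete types), extend $\Sigma$ to some $p \in S^I(\M)$. Any $A_0 \subset M$ of size $\leq |T|$ fails to capture cofinitely many of the $b_i$, and the remaining tail continues to witness that $p$ $I$-divides over $A_0$, contradicting $(c)$.

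The main obstacle is the extraction step in $(a) \Rightarrow (b)$: the recursively produced $b_\alpha$ each come with their own \emph{local} witnessing indiscernible $(c^\alpha_i)$, and it is not \emph{a priori} clear that the $b_\alpha$ themselves can be reassembled into a single $A$-indiscernible sequence along which $I$-thinness holds \emph{uniformly} on all $k$-tuples, as demanded by the definition in $(a)$. The essential input is that, for fixed $\phi$ and $k$, the condition ``$\bigwedge_{j<k} \phi(x,y_j)$ is $I$-thin'' is a type-definable condition on the parameter tuple $(y_0,\ldots,y_{k-1})$, which holds by the $\bigvee$-definability of the ideal schema $I$. Since Erd\H{o}s--Rado extraction preserves type-definable conditions along the resulting indiscernible, this type-definability transports pointwise $I$-thinness into uniform $I$-thinness along all increasing $k$-tuples of the extracted sequence, closing the argument.
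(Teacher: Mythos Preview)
Your $(a)\Rightarrow(b)$ has the right skeleton but the finish is both unnecessary and incorrect. The claim that ``$\bigwedge_{j<k}\phi(x,y_j)$ is $I$-thin'' is type-definable in $(y_0,\ldots,y_{k-1})$ is backwards: $\bigvee$-definability of $I$ says precisely that \emph{wideness} is type-definable, so thinness is only $\bigvee$-definable. More fundamentally, $k$-fold conjunctions among your $b_\alpha$ are not thin at all --- each $\phi(x,b_\alpha)$ lies in the wide type $p$, so the whole family $\{\phi(x,b_\alpha)\}$ is $I$-wide. What you have is that each $\phi(x,b_\alpha)$ $k$-$I$-\emph{divides} over $A_\alpha$, witnessed by a separate sequence $(c^\alpha_i)$; that is exactly what condition (iv) in the definition of a uniformly $\phi$-$k$-$I$-dividing sequence asks for, and Erd\H{o}s--Rado extraction on the $b_\alpha$ does nothing to produce it. The correct finish is simpler: include the finite witness $q_\alpha\subset \neg Ix.\bigwedge_{j<k}\phi(x,y_j)$ in your pigeonhole. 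There are at most $|T|$ such finite subsets, so after refining to a single triple $(\phi,k,q)$ the resulting subsequence already satisfies the definition verbatim (dividing over $A_\alpha$ implies dividing over the smaller set of predecessors, with the same $q$). This is what the paper does.

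Your $(c)\Rightarrow(a)$ has a more serious gap. First, the cardinality step fails outright: there are only $\omega$ many $b_i$ while $|A_0|\le|T|$ permits $A_0$ to contain all of them. One can stretch the sequence to length $|T|^+$ by compactness (the condition of affording a fixed $q$ for uniform $\phi$-$k$-$I$-dividing is type-definable, as the paper observes), but the real problem remains. You need $p$ to $I$-divide over an \emph{arbitrary} small $A_0\subset M$, yet each $\phi(x,b_i)$ is only known to $I$-divide over $A\cup\{b_j:j<i\}$, and $I$-dividing does \emph{not} pass to supersets --- the implication runs the other way. An arbitrary $A_0$ may contain elements of $M$ that kill every witnessing indiscernible sequence for $\phi(x,b_i)$. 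The paper addresses this with a genuinely more delicate construction: alongside the parameters $a_i$ it builds, by transfinite recursion with automorphism-conjugation at each step (via Proposition~\ref{I-dividing basics}.iii), an elementary chain of $\omega$-saturated models $M_j$ so that $\phi(x,a_j)$ $k$-$I$-divides over the \emph{model} $M_j$, not merely over $\{a_l:l<j\}$. Then any small $A_0$ lands inside some $M_j$ by regularity of $|T|^+$, and $\phi(x,a_j)\in p$ divides over $M_j\supseteq A_0$. Your sketch omits this mechanism entirely.
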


As a corollary, we deduce that $I$-stability implies $I$-simplicity. We then prove some additional properties of $I$-forking in $I$-simple theories, such as the following generalization of Kim's lemma \cite{kim}:

\begin{theorem}
Let $T$ be $I$-simple. Let $\pi(x,y)$ be a partial type over $A$ and $\b{b} = (b_i : i \in \omega)$ an infinite $I$-Morley sequence over $A$. If $\bigcup_{i \in \omega} \pi(x,b_i)$ is $I$-wide, then $\pi(x,b_0)$ does not $I$-divide over $A$. 
\end{theorem}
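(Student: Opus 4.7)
The plan is to imitate the classical proof of Kim's lemma, replacing ``consistent'' by ``$I$-wide'' and ``(non-)forking'' by ``(non-)$I$-dividing'' throughout. The three ingredients are: an extension property allowing $I$-wide partial types to lift to $I$-wide complete types over a larger parameter set; wide local character (condition~b.\ of the preceding theorem), available because $T$ is $I$-simple; and a base-change step exploiting the $I$-Morley structure of $\b{b}$.

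Assume $\bigcup_{i \in \omega} \pi(x, b_i)$ is $I$-wide. Working in a sufficiently saturated $\frak{U}$, realize this wide partial type by some $a$ whose complete type $\tp(a/A\b{b})$ is itself $I$-wide. Such an $a$ exists by an extension property for wide types, which in the $\bigvee$-definable framework is a formal consequence of the Stone-theoretic description of wide ultrafilters as those avoiding the ideal. By construction, $\pi(x, b_0) \subseteq \tp(a/A\b{b})$, so it suffices to show that $\tp(a/A\b{b})$ does not $I$-divide over $A$.

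By wide local character, $\tp(a/A\b{b})$ does not $I$-divide over some $A_0 \subseteq A \cup \b{b}$ with $|A_0| \leq |T|$. Since $\b{b}$ is indexed by $\omega$ and $A_0$ is small, one may assume after a harmless re-indexing that $A_0 \cap \b{b} \subseteq \b{b}_{<N}$ for some $N < \omega$. Next, extend $\b{b}$ to a longer $A$-indiscernible $I$-Morley sequence and apply an $A$-automorphism that shifts the initial segment $\b{b}_{<N}$ off the sequence, together with the fact that any tail of an $I$-Morley sequence is itself $I$-Morley over the union of $A$ with the head. Combined with invariance of $I$-dividing under $A$-automorphisms, this descent forces $A_0 \subseteq A$, so $\tp(a/A\b{b})$ does not $I$-divide over $A$, and in particular neither does $\pi(x, b_0)$.

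The main obstacle is the final base-change step. In the classical setting the analogous descent rests on the symmetry and transitivity of non-forking in simple theories; here one needs the corresponding invariance and transitivity properties of $I$-dividing along $I$-Morley sequences. Provided the preceding section develops enough of the $I$-forking calculus --- in particular, that tails of $I$-Morley sequences remain $I$-Morley over extended bases and that non-$I$-dividing is preserved under $A$-automorphisms --- the argument is essentially mechanical. Otherwise, establishing these auxiliary facts (most of which are mild adaptations of their classical analogues, now phrased in terms of wide types and the $\bigvee$-definable ideal schema) will be the chief technical overhead.
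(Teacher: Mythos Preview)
There are two genuine gaps in your argument, both in the descent step.

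\textbf{First, a cardinality error.} You claim that since $A_0$ is small and $\b{b}$ is indexed by $\omega$, one may assume $A_0 \cap \b{b} \subseteq \b{b}_{<N}$ for some finite $N$. But ``small'' here means $|A_0| \leq |T|$, and $|T| \geq \aleph_0$; so $A_0$ may well contain all of $\b{b}$. The fix is to stretch $\b{b}$ to an $I$-Morley sequence $\b{c}$ of length $|T|^+$ \emph{before} applying wide local character, so that regularity of $|T|^+$ forces the base inside a proper initial segment. This is what the paper does, and the stretching must come first, not after you have already chosen $A_0$.

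\textbf{Second, and more seriously, your base-change relies on symmetry.} Even after stretching, you would have $\tp(a/A\b{c})$ not $I$-dividing over $A c_{<\alpha}$, and you want to conclude that $\pi(x,c_\beta)$ does not $I$-divide over $A$ for some $\beta \geq \alpha$. Passing from non-$I$-dividing over $A c_{<\alpha}$ down to non-$I$-dividing over $A$ requires knowing that $c_{<\alpha}$ is $I$-independent from $c_\beta$ over $A$ in the direction compatible with transitivity. The $I$-Morley property gives you $c_\beta \anch^I_A c_{<\beta}$, not the reverse; to flip this you would need symmetry of $\anch^I$, which is explicitly left open in the paper (see the final Problem). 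The paper circumvents this by indexing $\b{c}$ with the \emph{reverse} well-order $(|T|^+,<^*)$. Then local character yields a base of the form $A \cup \{c_i : i_0 <^* i\}$, and the $I$-Morley condition now reads: each $c_i$ with $i_0 <^* i$ is $I$-independent from $c_{<^*i} \ni c_{i_0}$ over $A$. Iterated transitivity (the Corollary following Lemma~\ref{I-dividing_and_indiscernibles}) then gives $(c_i : i_0 <^* i)^\frown d$ $I$-independent from $c_{i_0}$ over $A$, whence $\pi(x,c_{i_0})$ does not $I$-divide over $A$. No symmetry is needed.
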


It follows that $I$-forking and $I$-dividing coincide in $I$-simple theories. We conclude with an open problem regarding the symmetry of $I$-independence in $I$-simple theories.

\medskip

\noindent\textbf{Acknowledgements.} The results of this paper are part of the author's Ph.D. dissertation, supervised by Tom Scanlon. The author thanks him for his guidance throughout the course of this project. The author also thanks Maryanthe Malliaris and Julia Wolf for helpful conversations. Finally, the author thanks L.~C. Brown for contributing Example \ref{type-WUFT counterexample}.

This research has been partially supported by the National Science Foundation (Grant No.\ DMS-22010405).
\subsection{Preliminaries}

The first two parts of Section 2 deal with abstract Boolean algebras and their quotients. In this connection it is useful to define the symmetric difference $a \Delta b := \left( a \wedge b' \right) \vee \left( a' \wedge b \right)$. We recall the celebrated

\begin{theorem}
(Stone representation theorem; \cite{stone}) Every Boolean algebra $B$ is isomorphic to the algebra $\text{Clop}\left(S\left(B\right)\right)$ of clopen subsets of its Stone space $S(B)$. In fact, the contravariant functors $B \mapsto S(B)$, $X \mapsto \text{Clop}(X)$ determine an anti-equivalence of categories between the category of Boolean algebras with lattice homomorphisms and the category of Stone (i.e., compact, Hausdorff, and totally disconnected) topological spaces with continuous maps.
\end{theorem}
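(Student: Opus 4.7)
The plan is to exhibit, for each Boolean algebra $B$, the candidate isomorphism $\eta_B : B \to \text{Clop}(S(B))$ sending $a \mapsto N_a$; then, for each Stone space $X$, to construct the ``dual'' map $\epsilon_X : X \to S(\text{Clop}(X))$; and finally to verify that both are natural isomorphisms, whence the anti-equivalence follows formally.

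First I would check that $\eta_B$ is a Boolean algebra homomorphism. The identities $N_{a \vee b} = N_a \cup N_b$, $N_{a \wedge b} = N_a \cap N_b$, and $N_{a'} = S(B) \setminus N_a$ follow directly from closure of ultrafilters under finite meets, upward closure, and the defining dichotomy $a \in \U$ or $a' \in \U$. For injectivity, the key technical input is the ultrafilter lemma: if $a \neq b$, then without loss of generality $a \wedge b' \neq \bot$, so $\{a \wedge b'\}$ generates a proper filter which, by Zorn's lemma, extends to an ultrafilter $\U$ with $a \in \U$ and $b \notin \U$; thus $N_a \neq N_b$. For surjectivity, an arbitrary clopen $K \subseteq S(B)$ is open, hence a union $\bigcup_i N_{a_i}$ of basic opens; since $K$ is also closed in the compact space $S(B)$, finitely many suffice, yielding $K = N_{a_1 \vee \cdots \vee a_n}$.

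Next, for a Stone space $X$, define $\epsilon_X : X \to S(\text{Clop}(X))$ by $x \mapsto \{K \in \text{Clop}(X) : x \in K\}$. Each $\epsilon_X(x)$ is an ultrafilter: the filter conditions are immediate from set-theoretic operations, and for any clopen $K$ either $x \in K$ or $x \in X \setminus K$. Continuity follows from the computation $\epsilon_X^{-1}(N_K) = K$. Injectivity uses the Hausdorff property of $X$ together with the fact that clopens form a basis in a Stone space, so any two distinct points are separated by a clopen. For surjectivity, given $\U \in S(\text{Clop}(X))$, the members of $\U$ form a family of closed subsets of $X$ with the finite intersection property; compactness of $X$ yields some $x \in \bigcap \U$, and then $\epsilon_X(x) \supseteq \U$, with equality by maximality.

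Finally, naturality of $\eta$ and $\epsilon$ is a routine diagram chase: for a Boolean homomorphism $f : A \to B$, commutativity of the square involving $\eta_A$, $\eta_B$, and $(f^*)^*$ reduces to the identity $N_{f(a)} = (f^*)^{-1}(N_a)$, and similarly for continuous maps between Stone spaces. The principal obstacle is the first part: one needs the ultrafilter lemma to produce enough ultrafilters to separate distinct elements of $B$. Once this is in hand, the rest of the argument is a formal interplay between the basic open sets $N_a$ and compactness in the Stone space.
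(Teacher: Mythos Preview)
Your argument is correct and is the standard proof of Stone duality. However, the paper does not actually prove this theorem: it is stated in the background section with a citation to \cite{stone} and no proof is given, so there is no ``paper's own proof'' to compare against. Your sketch---establishing that $a \mapsto N_a$ is an isomorphism via the ultrafilter lemma for injectivity and compactness for surjectivity, then building the inverse $\epsilon_X$ using compactness and the clopen basis of a Stone space---is exactly the classical route and would be accepted as a proof of the cited result.
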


\noindent For more details, we refer the reader to a standard reference (e.g. \cite{halmos}).

The remaining portion of the paper deals with model-theoretic classification theory. Our notation is standard, mostly following \cite{ct} barring a few exceptions which we clarify below.

Fix a (possibly multisorted) first-order language $\L$, an $\L$-structure $\M$, and a subset $A \subset M$. For $\phi \in \L$, we use the convention $\phi^1 := \phi$ and $\phi^0 := \neg\phi$. $\L_x$ denotes the set of partitioned $\L$-formulas\footnote{Formally, a partitioned formula is a tuple $(\phi,x,y)$ such that $\phi$ is an $\L$-formula, $x$ and $y$ are disjoint tuples of variables in $\L$, and the free variables of $\phi$ are among those in the tuple $x^\frown y$. If $(\phi,x,y)$ is a partitioned formula, we write $\phi = \phi(x,y)$ and say that $\phi$ is a partitioned $\L$-formula in the \emph{context} $x$ and \emph{parameters} $y$.} in the context $x$; $\L_x(A)$ denotes the set of formulas of the expanded language $\L(A)$ whose free variables are among those in $x$. Note that under this convention, $\L_x$ and $\L_x(\emptyset)$ are distinct. We write $\M_x$ for the set of $x$-tuples of elements of the domain of $\M$.

By a definable subset of a model, we mean definable with parameters. The quotient $\L_x(A)/{\sim_\M}$, where $\phi\sim_\M\psi$ if and only if $\M\models\forall x(\phi\leftrightarrow\psi)$, acquires the structure of a Boolean algebra $L_x(A,\M)$ whose Boolean operations are defined in terms of the corresponding logical operations on representatives; it is naturally isomorphic to the subalgebra $\{ \phi(\M,a) : \phi(x,y) \in \L_x(A) \}$ of $\mathcal{P}(\M_x)$. For a collection $\Delta \subset \L_x$, $L_\Delta(A,\M)\subset L_x(A,\M)$ denotes the subalgebra generated by $\left\{[\phi(x,a)]_{\sim\M}:\phi(x,y)\in\Delta,a\in A^{|y|}\right\}$.

Let $S_x(A,\M)$ be the set of complete $x$-types over $A$ with respect to $\M$. As it only depends on the $\L(A)$-theory of $\M$, we often omit $\M$ when it no confusion can arise. The map $p(x) \mapsto \left\{[\phi(x)_{\sim_\M}] : \phi(x) \in p(x) \right\}$ identifies $S_x(A,\M)$ with the Stone space $S(L_x(A,\M))$. $\Delta$-types are defined analogously using $\L_\Delta(A)$; the space of complete $\Delta$-types is $S_\Delta(A,\M)$. 

For an infinite $\kappa$, $\M$ is $\kappa$-saturated if every type in finitely (equivalently, $\leq \kappa$) many variables over $A\subset M$ with $|A|<\kappa$ is realized in $\M$; strongly $\kappa$-homogeneous if any tuples $a,b\in\M_x$ with $\tp(a/A)=\tp(b/A)$ are related by some $f\in\Aut(\M/A)$;and $\kappa$-universal if every elementarily equivalent model of cardinality $\leq \kappa$ admits an elementary embedding into $\M$. A model is $\kappa$-saturated and strongly homogeneous if both hold. When $\kappa = \aleph_0$, we use $\omega$ in its stead. The following fact is standard:
\begin{fact}[\cite{ct,hodges}]
\begin{enumerate}[i.]
\item $\kappa$-saturated models are $\kappa$-universal.
\item Every theory has $\kappa$-saturated, strongly homogeneous models for all $\kappa$.
\item Every structure has an $\omega$-saturated elementary extension of size $\le |M|+2^{|\L|}$.
\end{enumerate}
\end{fact}
A definable set $D\subset\M$ is algebraic if $|D|<\aleph_0$; $a$ is algebraic over $A$ if $\tp(a/A)$ contains an algebraic formula. Write $\acl(A)$ for the set of all such elements. If $\M$ is strongly $\kappa$-homogeneous and $|A|<\kappa$, then $D$ is $A$-definable if and only if $f(D)=D$ for all $f\in\Aut(\M/A)$, and $a\in\acl(A)$ if and only if its orbit under $\Aut(\M/A)$ is finite.

The \emph{$n$-Ehrenfeucht--Mostowski type} of the sequence $\b{a}$ over $A$ is given by 
\[ \EM_n(\b{a}/A) := \left\{ \phi(x_1,\dots,x_n) \in \L(A) : \frak{M} \models \phi(a_{j_1},\dots,a_{j_n}) \text{ for all } j_1 < \cdots < j_n \in J \right\} \]
and the \emph{Ehrenfeucht--Mostowski type} of $\b{a}$ over $A$ is defined as
\[\EM(\b{a}/A) := \bigcup_{n \in \omega} \EM_n(\b{a}/A).\]

\begin{fact}
(Standard Lemma; \cite{tz}) In every $|K| + |A|^+$-saturated elementary extension $\N \succ \M$ there exists an $A$-indiscernible sequence $\b{a}' = (a_i' : i \in K) \in \left(\M_x\right)^K$ which realizes $\EM(\b{a}/A)$.
\end{fact}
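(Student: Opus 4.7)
The plan is to realize the desired theory in an elementary extension by a compactness-plus-Ramsey argument.

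First, I would introduce new constant symbols $c_i$, one for each $i \in K$, and define an $\L(A) \cup \{c_i : i \in K\}$-theory $T^*$ consisting of: (i) the elementary diagram of $\M$; (ii) for each $n \in \omega$, each increasing tuple $i_1 < \cdots < i_n$ in $K$, and each $\phi(x_1,\dots,x_n) \in \EM_n(\b{a}/A)$, the sentence $\phi(c_{i_1},\dots,c_{i_n})$; and (iii) for each $\phi(x_1,\dots,x_n) \in \L(A)$ and each pair of increasing tuples $i_1 < \cdots < i_n$, $j_1 < \cdots < j_n$ in $K$, the sentence $\phi(c_{i_1},\dots,c_{i_n}) \leftrightarrow \phi(c_{j_1},\dots,c_{j_n})$. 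Any model of $T^*$, restricted to the original language, is an elementary extension $\N \succ \M$; the interpretations of the $c_i$ form an $A$-indiscernible sequence (by (iii)) whose Ehrenfeucht--Mostowski type extends $\EM(\b{a}/A)$ (by (ii)). By completeness of $\EM_n(\b{a}/A)$ on $n$-variable $\L(A)$-formulas, this extension is in fact an equality, so producing such a model suffices.

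By the compactness theorem, it is enough to show that every finite subset $T_0 \subset T^*$ is satisfiable. Such a $T_0$ mentions only finitely many constants $c_{i_1},\dots,c_{i_N}$ (with $i_1 < \cdots < i_N$ in $K$) and involves finitely many formulas $\phi_1(\b{x}),\dots,\phi_m(\b{x})$ from $\L(A)$, where we may pad free variables and take $\b{x} = (x_1,\dots,x_n)$ of common arity $n$. It suffices to produce $a_1',\dots,a_N' \in \M$ whose $n$-EM-type agrees with $\EM_n(\b{a}/A)$ on $\phi_1,\dots,\phi_m$ and which are indiscernible for these finitely many formulas; then $\M$ itself (interpreting $c_{i_k}$ as $a_k'$) satisfies $T_0$.

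To produce such witnesses, I would apply the infinite Ramsey theorem. Color the $n$-element subsets $\{j_1 < \cdots < j_n\} \subset J$ by the function
\[ \{j_1<\cdots<j_n\} \mapsto \bigl(\llbracket \M \models \phi_k(a_{j_1},\dots,a_{j_n}) \rrbracket\bigr)_{k=1}^m \in \{0,1\}^m. \]
There are only $2^m$ colors, so by Ramsey's theorem there is an infinite $J' \subset J$ on which the coloring is constant. Pick any $N$ elements $j_1 < \cdots < j_N$ in $J'$ and set $a_k' := a_{j_k}$. By construction, $(a_1',\dots,a_N')$ is indiscernible for $\phi_1,\dots,\phi_m$, and since $(a_{j_1},\dots,a_{j_n})$ is an increasing $n$-subtuple of $\b{a}$, it realizes $\EM_n(\b{a}/A)$ restricted to $\{\phi_1,\dots,\phi_m\}$, which yields the finite satisfiability of $T_0$.

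The only mild subtlety is bookkeeping to make the Ramsey coloring capture all required formulas with a uniform arity; this is handled by the padding step above. Extracting a model of $T^*$ via compactness and passing to the restriction to $\L$ then delivers the required elementary extension $\N \succ \M$ together with the indiscernible sequence $\b{a}' = (a_i' : i \in K)$ realizing $\EM(\b{a}/A)$.
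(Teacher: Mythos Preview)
The paper does not supply a proof of this Fact; it merely remarks that it is ``a straightforward application of infinite Ramsey's theorem'' and cites \cite{tz}. Your compactness-plus-Ramsey argument is exactly the standard proof alluded to, and it is correct in substance.

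One small correction: your claim that $\EM_n(\b{a}/A)$ is complete is false in general. The paper itself notes that completeness holds only when $\b{a}$ is already $A$-indiscernible; for an arbitrary sequence, $\EM_n(\b{a}/A)$ consists of those formulas satisfied by \emph{all} increasing $n$-tuples and need not decide every $\L(A)$-formula. Fortunately your argument does not actually use this: the statement only asks that $\b{a}'$ \emph{realize} $\EM(\b{a}/A)$, i.e.\ that $\EM(\b{a}/A) \subset \EM(\b{a}'/A)$, and clause (ii) of your theory $T^*$ secures exactly that. Simply delete the sentence about completeness and the proof stands.
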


For more sophisticated coloring arguments, we will require a stronger version of Ramsey's theorem:

\begin{fact}
(Erd\H{o}s--Rado--Shelah; \cite{tz}) Every $\mu$-coloring of the set of $n+1$-element subsets of $\beth^+_n(\mu)$ admits a homogeneous set of cardinality $\mu^+$.

Let $\frak{M}$ be a sufficiently saturated structure. Then for every small $A \subset M$, there exists $\lambda$ such that for every infinite linear order $J$ of cardinality $\lambda$ and sequence $\b{a} \in \left(\M_x\right)^J$ there exists an $A$-indiscernible sequence $\b{b} \in \left(\M_x\right)^\omega$ such that for all $j_1 < \cdots < j_n < \omega$ there exist $i_1 < \cdots < i_n \in J$ such that $\tp(a_{i_1},\dots,a_{i_n}/A) = \tp(b_{j_1},\dots,b_{j_n}/A)$. 
\end{fact}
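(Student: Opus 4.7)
The proof has two components which I would address in order: first, the combinatorial partition relation $\beth_n^+(\mu) \to (\mu^+)^{n+1}_\mu$, proved by induction on $n$; second, the model-theoretic extraction of an $A$-indiscernible $\omega$-subsequence of $\b{a}$, obtained by iterating the combinatorial result. The base case $n = 0$ reduces to the pigeonhole principle on the regular cardinal $\mu^+ = \beth_0^+(\mu)$. For the inductive step, given a $\mu$-coloring $c$ of $[\beth_n^+(\mu)]^{n+1}$, I would construct by transfinite recursion on $\alpha < \beth_{n-1}^+(\mu)$ a sequence $(x_\alpha)$ of distinct elements of $\beth_n^+(\mu)$ and a decreasing chain $X_\alpha$ of subsets: set $X_0 = \beth_n^+(\mu)$, take $x_\alpha := \min X_\alpha$, and let $X_{\alpha+1} \subseteq X_\alpha \setminus \{x_\alpha\}$ be a large equivalence class for the relation ``$y \sim_\alpha z$ iff $c(\bar{s} \cup \{y\}) = c(\bar{s} \cup \{z\})$ for every $n$-subset $\bar{s}$ of $\{x_\beta : \beta \leq \alpha\}$''. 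The cardinal arithmetic $\mu^{|\alpha|} \leq \beth_n(\mu)$ for $|\alpha| \leq \beth_{n-1}(\mu)$, together with the regularity of $\beth_n^+(\mu)$, ensures that $|X_\alpha|$ can be kept at $\beth_n^+(\mu)$ throughout the recursion. The resulting sequence $(x_\alpha)$ has the property that $c(\{x_{\alpha_1}, \ldots, x_{\alpha_{n+1}}\})$ depends only on the first $n$ coordinates; applying the inductive hypothesis to the induced $\mu$-coloring of $n$-tuples on $\{x_\alpha : \alpha < \beth_{n-1}^+(\mu)\}$ yields the desired homogeneous set of size $\mu^+$.

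For the model-theoretic consequence, I would set $\mu := 2^{|A|+|\L|}$, an upper bound on $|S_n(A,\M)|$ for each $n < \omega$, and choose $\lambda$ large enough for the iterated Erdős--Rado extractions to run (e.g., $\lambda := \beth_\omega(\mu)^+$). Given a linear order $J$ of cardinality $\lambda$ and $\b{a} \in (\M_x)^J$, I would build by recursion on $k$ increasing indices $i_0 < i_1 < \ldots$ in $J$ together with a $\subseteq$-decreasing chain of tails $J_0 \supseteq J_1 \supseteq \ldots$, maintaining the invariants that $i_k = \min J_k$ and that $J_{k+1}$ is a large homogeneous subset of $J_k \setminus \{i_0, \ldots, i_k\}$ for the $\mu$-coloring $\{j_1 < \ldots < j_{k+1}\} \mapsto \tp(a_{j_1}, \ldots, a_{j_{k+1}}/A)$ on $(k+1)$-subsets. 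Setting $\b{b} := (a_{i_k})_{k \in \omega}$, the construction guarantees that $\b{b}$ is $A$-indiscernible and that every $n$-type of $\b{b}$ over $A$ coincides with that of some increasing $n$-subtuple of $\b{a}$. Since $\b{b}$ is a literal subsequence of $\b{a}$, it already lies in $(\M_x)^\omega$ without any appeal to saturation.

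The principal obstacle is the cardinal bookkeeping in the iteration: a single Erdős--Rado application at exponent $k+1$ produces a homogeneous subset of size only $\mu^+$, which in general will not suffice to apply the partition relation again at exponent $k+2$. Resolving this requires either starting from a $\lambda$ well above all $\beth_n(\mu)$ and using a strengthened variant of the partition relation at each stage, or organizing the recursion as a tree extraction that commits to one new index $i_k$ per stage while threading uniform type control through a shrinking ``witness'' tail that survives every future exponent.
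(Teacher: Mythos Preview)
The paper does not prove this Fact; it is cited from \cite{tz}. So there is no in-house argument to compare against, and I will simply assess your proposal on its own terms.

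Your treatment of the Erd\H{o}s--Rado partition relation is the standard one and is fine.

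The model-theoretic extraction, however, has a genuine gap beyond the cardinal bookkeeping you flag. In your construction, $J_{k+1}$ is homogeneous for the coloring of $(k+1)$-subsets of $J_k \setminus \{i_0,\ldots,i_k\}$. That controls the $(k+1)$-type of tuples drawn entirely from $J_{k+1}$, but the indices $i_0,\ldots,i_k$ have already been removed and lie in the earlier, less homogeneous sets $J_0,\ldots,J_k$. Concretely: $i_0 = \min J_0$ carries no homogeneity constraint at all, so $\tp(a_{i_0}/A)$ need not agree with $\tp(a_{i_1}/A)$; more generally $(a_{i_0},\ldots,a_{i_n})$ need not have the same $(n+1)$-type as $(a_{i_{n+1}},\ldots,a_{i_{2n+1}})$. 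What your recursion actually produces is a sequence whose tail from index $n$ onward is $n$-indiscernible for each $n$, not a fully $A$-indiscernible sequence. The claim that $\b{b}$ can be taken as a literal subsequence of $\b{a}$ ``without any appeal to saturation'' is therefore not supported by the construction as written.

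The usual repair is exactly what the hypothesis ``$\frak{M}$ sufficiently saturated'' is there for: from the nested $n$-indiscernible sets $J_n$ one reads off a coherent family of complete $n$-types $p_n$ over $A$ (the common type on each $J_n$), observes that $\bigcup_n p_n$ is finitely satisfiable in $\b{a}$, and realizes it in $\frak{M}$ by compactness and saturation. The resulting $\b{b}$ is $A$-indiscernible and every finite subtuple matches the type of some increasing subtuple of $\b{a}$, but $\b{b}$ is not in general a subsequence of $\b{a}$. Your closing paragraph gestures at a tree-style extraction that might avoid saturation, but that is a different and more delicate argument than the one you spelled out, and would need to be carried through in full.
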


We recall another combinatorial

\begin{fact}
(Erd\H{o}s--Makkai; \cite{tz}) Let $X$ be an infinite set, and let $\mathcal{S} \subset \P(X)$. If $|\mathcal{S}| > |X|$, then there exist $\b{a} \in X^\omega$ and $(S_i : i \in \omega) \in \mathcal{S}^\omega$ such that for all $i, j \in \omega$ either 
\begin{enumerate}[i.]
\item $a_i \in S_j \Leftrightarrow i < j$, or
\item $a_i \in S_j \Leftrightarrow j < i$.
\end{enumerate}
\end{fact}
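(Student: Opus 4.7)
The plan is to pass to a subfamily of size $|X|^+ =: \lambda$ (the conclusion for the subfamily implies the conclusion for $\mathcal{S}$), then construct the sequences $(a_n)_{n<\omega}$ and $(S_n)_{n<\omega}$ by greedy recursion. Since $\lambda$ is a regular successor cardinal and $|X| < \lambda$, for any subfamily of size $\lambda$ and any $a \in X$, at least one of the two classes $\{S : a \in S\}$, $\{S : a \notin S\}$ has cardinality $\lambda$; this regularity will drive the refinements at each stage.

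The construction maintains a decreasing chain of reservoirs $\mathcal{S} \supseteq \mathcal{T}_0 \supseteq \mathcal{T}_1 \supseteq \cdots$ with $|\mathcal{T}_n| = \lambda$ consistent with case (i): every $S \in \mathcal{T}_n$ contains $\{a_0, \ldots, a_{n-1}\}$, and at each previous stage we chose $S_n \in \mathcal{T}_n$ and $a_n \in X \setminus (S_0 \cup \cdots \cup S_n)$. To extend at stage $n$, I would pick any $S_n \in \mathcal{T}_n$ and search for $a_n \in X \setminus (S_0 \cup \cdots \cup S_n)$ such that $\mathcal{T}_{n+1} := \{S \in \mathcal{T}_n \setminus \{S_n\} : a_n \in S\}$ has cardinality $\lambda$. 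By a pigeonhole on the incidence between $X \setminus (S_0 \cup \cdots \cup S_n)$ and $\mathcal{T}_n$, such $a_n$ exists provided $\mathcal{T}_n$ does not degenerate in a specific way.

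The main obstacle is the degeneracy: at some stage, for every $a \in X \setminus (S_0 \cup \cdots \cup S_n)$, the subfamily $\{S \in \mathcal{T}_n : a \in S\}$ has size $< \lambda$. A double-counting argument then forces most members of $\mathcal{T}_n$ to be subsets of $S_0 \cup \cdots \cup S_n$, a set of cardinality at most $|X| < \lambda$; so we obtain $\lambda$ distinct subsets of a set of size $\leq |X|$. Under this constraint, the \emph{dual} case (ii) construction (in which the reservoir consists of sets \emph{avoiding} the initial $a_i$'s and $a_n$ is chosen inside $\bigcap_{i<n} S_i$) becomes available within the restricted support, and by a symmetric analysis delivers an infinite case (ii) half-graph. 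The hard step will be to carry out this symmetric analysis and to confirm the dichotomy is rigid: neither both cases can fail simultaneously, nor can the degeneracy propagate to the dual construction without producing a contradiction with $|\mathcal{S}| > |X|$.
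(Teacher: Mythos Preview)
The paper does not prove this statement; it is recorded as a \emph{Fact} with a citation to Tent--Ziegler and invoked as a black box in the proof of Theorem~\ref{V-WUFT_2}. So there is no proof in the paper against which to compare your attempt.

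On its own merits, your proposal is incomplete, and you say so yourself: the passage ``The hard step will be to carry out this symmetric analysis and to confirm the dichotomy is rigid'' is exactly the crux, and you have not carried it out. Your case~(i) greedy construction and your degeneracy analysis are correct: if at stage $n$ no admissible $a_n$ exists, then regularity of $\lambda=|X|^+$ forces $\lambda$ many members of $\mathcal T_n$ to be subsets of $Z:=S_0\cup\cdots\cup S_n$. But this is not yet a foothold for case~(ii). You now have $\lambda$ distinct subsets of $Z$, with $|Z|\le|X|$ --- precisely the hypothesis you started from, with $Z$ in place of $X$. Running the dual construction on this family, the \emph{dual} degeneracy (every candidate $a$ inside the required intersection is contained in all but $<\lambda$ sets) yields $\lambda$ members of the reservoir all containing some fixed $W\subseteq Z$, hence $\lambda$ distinct subsets of $Z\setminus W$, again of size $\le|X|$. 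No cardinal invariant has decreased, so nothing prevents the two degeneracies from alternating indefinitely. Your proposed dichotomy (``neither both cases can fail simultaneously'') is therefore unproved, and I do not see how to rescue it without a genuinely new idea.

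The standard argument avoids this oscillation entirely. One maintains a single shrinking reservoir $\mathcal S_n$ of size $>|X|$ on which all previously chosen $a_i$ take a constant value, chooses $B_n\in\mathcal S_n$ and then $a_n$ separating $B_n$ from $>|X|$ many members of $\mathcal S_n$; this controls the incidences $a_i\in B_j$ for $i\le j$ by construction, and the remaining incidences $a_i\in B_j$ for $i>j$ are then homogenized by a single application of Ramsey's theorem (together with an index shift in one subcase). The case~(i)/(ii) split arises only at that final Ramsey step, not as a branching decision inside the recursion. I would recommend consulting the cited reference for the details rather than trying to push the alternating-reservoir idea further.
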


Finally, we recall the notion of a definable type from \cite{ct}. Let $\phi = \phi(x,y) \in \L_x$, let $\pi(x)$ be a partial type with respect to $\M$, and let $A \subset M$. $\pi(x)$ is \emph{$\phi$-$A$-definable} if there exists $\psi(y) \in \L_y(A)$ such that for all $c \in \M_y$, if $\phi(x,c) \in \pi(x)$ then $\M \models \psi(c)$ and if $\neg\phi(x,c) \in \pi(x)$ then $\M \models \neg\psi(c)$. $\pi(x)$ is \emph{$A$-definable} if it is $\phi$-$A$-definable for every $\phi \in \L_x$. $\pi(x)$ is \emph{definable} if it is $\dom \pi(x)$-definable.

\section{Wide ultrafilters}
\subsection{Quotient algebras and their ultrafilters}

If $B$ is a Boolean algebra and $I \subset B$ is an ideal, then $I$ uniquely determines a congruence $\equiv$ on $B$ given by $a \equiv b$ if and only if $a \Delta b \in I$. As such the set of congruence classes $[a]_I \in B/I$ inherits the structure of a Boolean algebra, and the map $\pi : B \to B/I$ is an epimorphism in the category of Boolean algebas. By Stone's representation theorem, $\pi$ induces a monomorphism $\pi^* : S(B/I) \to S(B)$ via $\pi^*\left(\U\right) = \pi^{-1}\U$. But continuous injections from compact to Hausdorff spaces are embeddings, so $\pi^*$ is in fact a homeomorphism from $S(B/I)$ onto its image. This subspace of $S(B)$ admits a concrete description:

\begin{definition}
An ultrafilter $\U \in S(B)$ is \emph{wide} with respect to $I$ if $\U \cap I = \emptyset$. Let $S^I(B)$ denote the closed subspace of $I$-wide ultrafilters on $B$.
\end{definition}

A subset $A \subset B$ is \emph{$I$-saturated} if it is saturated with respect to the associated congruence, i.e. $a \in A$ and $a \Delta b \in I$ implies $b \in A$. Clearly every $\U \in S^I(B)$ is $I$-saturated: if $a \in \U$ and $b \equiv a$ but $b \notin \U$, then maximality implies $a \wedge b' \in \U$ and $b \equiv a$ implies $a \wedge b' \equiv \bot$, i.e. $a \wedge b' \in I$. Using this observation, it is straightforward to verify that $\pi^*$ maps $S(B/I)$ onto $S^I(B)$.

Recall that the Boolean prime ideal theorem asserts the existence of ultrafilters extending subsets of $B$ with the finite meet property (FMP). It also shows that the same holds for wide ultrafilters with respect to subsets $X \subset B$ with the \emph{$I$-finite meet property} ($I$-FMP), i.e. those $X$ for which $\bigwedge X_0 \notin I$ for all finite $X_0 \subset X$.

\begin{proposition}
\label{FMP_correspondence}
The set of $I$-FMP, $I$-saturated subsets of $B$ bijects with the set of FMP subsets of $B/I$ via the maps $X \mapsto \pi X$ and $Y \mapsto \pi^{-1} Y$.
\end{proposition}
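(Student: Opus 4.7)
The plan is to verify that the two assignments are well-defined and mutually inverse, treating the proposition as a restriction of the general correspondence between $I$-saturated subsets of $B$ and arbitrary subsets of $B/I$ (via $\pi$) to the ones cut out by the FMP conditions. The key computational input is the identity $\pi\bigl(\bigwedge X_0\bigr) = \bigwedge \pi(X_0)$ for finite $X_0 \subset B$, which holds because $\pi$ is a Boolean algebra homomorphism, together with the observation that for $a \in B$ one has $a \in I$ iff $\pi(a) = [\bot]_I$.

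First I would check that $X \mapsto \pi X$ lands in the FMP subsets of $B/I$. Given $X$ that is $I$-FMP and $I$-saturated, and given a finite $Y_0 = \pi X_0 \subset \pi X$ (with $X_0 \subset X$ finite), the identity above yields $\bigwedge Y_0 = \pi(\bigwedge X_0)$, and $\bigwedge X_0 \notin I$ by $I$-FMP, so $\bigwedge Y_0 \neq [\bot]_I$. Conversely, if $Y \subset B/I$ has FMP, then $\pi^{-1}Y$ is manifestly $I$-saturated (its membership depends only on the $\equiv$-class), and for any finite $X_0 \subset \pi^{-1}Y$ the same identity gives $\pi(\bigwedge X_0) = \bigwedge \pi(X_0) \neq [\bot]_I$ by FMP of $Y$, so $\bigwedge X_0 \notin I$.

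Next I would verify the two round trips. The equality $\pi(\pi^{-1}Y) = Y$ is immediate from surjectivity of $\pi$. For $\pi^{-1}(\pi X) = X$ with $X$ being $I$-saturated: the inclusion $X \subset \pi^{-1}(\pi X)$ is automatic, and for the reverse, if $a \in \pi^{-1}(\pi X)$ then $\pi(a) = \pi(b)$ for some $b \in X$, i.e.\ $a \equiv b$, so by $I$-saturation $a \in X$.

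There is no real obstacle here; the content of the proposition is essentially bookkeeping. The only point that deserves any care is the $I$-saturation hypothesis on $X$: without it, $\pi^{-1}\pi X$ could be strictly larger than $X$, and the map $X \mapsto \pi X$ would fail to be injective on the full collection of $I$-FMP subsets. Restricting to $I$-saturated subsets is exactly the minimum needed to make the correspondence a bijection.
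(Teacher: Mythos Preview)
Your proof is correct and follows essentially the same route as the paper: verify well-definedness of both maps using $\pi(\bigwedge X_0) = \bigwedge \pi(X_0)$ together with $a \in I \Leftrightarrow \pi(a) = [\bot]_I$, then check $\pi\pi^{-1}Y = Y$ by surjectivity and $\pi^{-1}\pi X = X$ by $I$-saturation. Your closing remark on why the $I$-saturation hypothesis is needed is a nice addition.
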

\begin{proof}
If $X \subset B$ is $I$-FMP, then $Y := \pi X$ is FMP. If $Y \subset B/I$ is FMP, then $X := \pi^{-1}Y$ is $I$-saturated and $I$-FMP. So these maps are well-defined. Next, we show the correspondence is bijective when restricted to the $I$-FMP subsets of $B$ which are also $I$-saturated. For $Y \subset B/I$, surjectivity of $\pi$ implies $\pi \pi^{-1}Y = Y$. On the other hand, suppose $X \subset B$ is $I$-saturated and $I$-FMP. $X \subset \pi^{-1}\pi X$ is automatic, and the other inclusion uses the definition of $I$-saturation.
\end{proof}

\begin{proposition}
\label{saturation_is_FMP}
For $X \subset B$, let $\hat{X} := \bigcup_{x\in X}[x]_I$ be its $I$-saturation. If $X$ is $I$-FMP, then so is $\hat{X}$.
\end{proposition}
\begin{proof}
Let $\hat{X}_0 \subset \hat{X}$ be finite. For each $x \in \hat{X}_0$, choose some $x^*\in X$ such that $x \equiv x^*$ and put $X_0 = \{x^* : x \in \hat{X}_0 \}$. If $X$ has the $I$-finite meet property, then we have $\bigwedge X_0 \notin I$. But $\bigwedge \hat{X}_0 \equiv \bigwedge X_0$; since $I$ is itself $I$-saturated, we conclude $\bigwedge \hat{X}_0 \notin I$, as desired.
\end{proof}

\begin{corollary}
\label{I-BPI}
Any $I$-FMP subset of $B$ extends to an $I$-wide ultrafilter on $B$.
\end{corollary}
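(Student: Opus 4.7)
The plan is to reduce the corollary to the classical Boolean prime ideal theorem applied to the quotient algebra $B/I$, using the machinery already established in \fullref{FMP_correspondence} and \fullref{wide_ultrafilters}, together with the preceding saturation lemma.

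First, given an $I$-FMP subset $X \subset B$, I would pass to its $I$-saturation $\hat{X}$. By the preceding lemma, $\hat{X}$ is again $I$-FMP, and by construction it is $I$-saturated. Then \fullref{FMP_correspondence} applies: the image $\pi(\hat{X}) \subset B/I$ has the (ordinary) finite meet property.

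Next, since $\pi(\hat{X})$ is an FMP subset of the Boolean algebra $B/I$, the classical ultrafilter lemma (Boolean prime ideal theorem) furnishes an ultrafilter $\U \in S(B/I)$ with $\pi(\hat{X}) \subset \U$. By \fullref{wide_ultrafilters}, the pullback $\V := \pi^*(\U) = \pi^{-1}\U$ lies in $S^I(B)$, i.e.\ it is an $I$-wide ultrafilter on $B$. Finally, for any $x \in X \subset \hat{X}$, we have $\pi(x) \in \pi(\hat{X}) \subset \U$, so $x \in \pi^{-1}\U = \V$. Thus $X \subset \V$, as required.

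There is no real obstacle: each of the three ingredients (the saturation lemma, \fullref{FMP_correspondence}, and \fullref{wide_ultrafilters}) has already been established, and what remains is their formal concatenation together with one invocation of the ultrafilter lemma in $B/I$. The only mild subtlety is remembering to saturate $X$ before projecting, since \fullref{FMP_correspondence} is stated for $I$-saturated subsets; the preceding lemma is tailor-made precisely for this step.
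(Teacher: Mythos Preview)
Your proof is correct and follows essentially the same approach as the paper: saturate $X$, project to $B/I$ via \fullref{FMP_correspondence}, extend to an ultrafilter there, and pull back using \fullref{wide_ultrafilters}. The only cosmetic differences are that the paper names the ultrafilters $\V$ and $\U$ in the opposite order and spells out the upward-closure step before invoking Zorn's lemma, whereas you appeal to the ultrafilter lemma directly.
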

\begin{proof}
Let $X \subset B$ have the $I$-finite meet property. By Proposition \ref{saturation_is_FMP}, its $I$-saturation $\hat{X}$ must also have the $I$-finite meet property. In view of Proposition \ref{FMP_correspondence}, $\hat{X}$ uniquely corresponds to an FMP subset $\hat{Y} := \pi \hat{X}$ of $B/I$. The upward closure
\[\left\{a \in B/I : \bigwedge Y_0 \leq a \text{ for some finite } Y_0 \subset \hat{Y}\right\}\]
of the $\pi$-system generated by $\hat{Y}$ is a filter on $B/I$ containing $\hat{Y}$, which thus extends to an ultrafilter $\V$ on $B/I$ by Zorn's lemma. But $\V$ in turn corresponds to a wide ultrafilter $\U := \pi^{-1}\V$ on $B$ as noted in the observation above. Moreover, we have
\[X \subset \hat{X} = \pi^{-1}\pi \hat{X} \subset \pi^{-1}\V = \U.\]
This completes the proof.
\end{proof}
\subsection{Wide types}

Fix an $\L$-structure $\M$, a context $x$, and an ideal $I \subset L_x(\M)$. For $\Delta \subset \L_x$ and $A \subset M$, define $I\restrict_{\Delta,A} := I \cap L_\Delta(A,\M)$; we omit $\Delta$ if $\Delta = \L_x$ and $A$ if $A = M$.

\begin{definition}
\begin{enumerate}[i.]
\item A formula $\phi(x) \in \L_x(M)$ is \emph{wide} with respect to $I$ if $\left[\phi(x)\right]_{\sim_\M} \notin I$. Otherwise, $\phi(x)$ is \emph{thin}. $\phi(x)$ is \emph{co-wide} if $\neg\phi(x)$ is wide, and is \emph{full} if $\neg\phi(x)$ is thin.
\item A partial type $p(x)$ is \emph{wide} if $\bigwedge p_0(x)$ is wide for every finite $p_0(x) \subset p(x)$. Otherwise, $p(x)$ is \emph{thin}.
\item $S^I_\Delta(A,\frak{M})$ denotes the set of $I$-wide complete $\Delta$-types over $A$ (with respect to $\frak{M}$) and $S^I_x(A,\frak{M})$ is the set of $I$-wide complete $x$-types over $A$ (with respect to $\frak{M})$. If $\frak{M}$ is clear from context, we omit it and write $S^I_\Delta(A) := S^I_\Delta(A,\frak{M})$ and $S^I_x(A) := S^I_x(A,\frak{M})$ respectively.
\item For $a \in \M_x$, $A \subset M$, $a$ is \emph{wide over A} if $\tp(a/A)$ is wide.
\end{enumerate}
\end{definition}

Note that under the usual identification $S_x(A,\M) \to S\left(L_x(A,\M)\right)$, $S^I_x(A, \frak{M})$ corresponds to the space $S^{I\restrict_{\Delta,A}}(L_x(A,\frak{M}))$ of $I$-wide ultrafilters on $L_x(A,\frak{M})$, and likewise for $S^I_\Delta(A,\frak{M})$.

The following are immediate consequences of Corollary \ref{I-BPI}.

\begin{corollary}
    \label{wide types}
\begin{enumerate}[i.]
\item Let $\pi(x)$ be a partial $\Delta$-type over $A$. If $\pi(x)$ is $I$-wide, then it extends to an $I$-wide complete $\Delta$-type over $A$. In particular, every partial wide type over $A$ extends to a complete wide type over $A$.
\item If $\M$ is $\aleph_0+|A|^+$-saturated and $\pi(x)$ is a wide partial type over $A$, then there exists $a \in \M_x$ which is wide over $A$ and such that $\M \models \pi(a)$.
\end{enumerate}
\end{corollary}
\subsection{$\bigvee$-definable ideals}

Let $\M \prec \frak{N}$ be an elementary extension of $\L$-structures, $A \subset M$ a set of parameters, and $I \subset L_x(\M)$ an ideal. Suppose that $\M$ is $\aleph_0+|A|^+$-saturated.

\begin{definition}
$I$ is \emph{$A$-$\bigvee$-definable} if for every $\L$-formula $\psi(x,y)$, the set $\left\{c \in \M_z : \psi(x,c) \text{ is }I\text{-wide} \right\}$ is type-definable over $A$. Let $\neg I x.\psi(x,y)$ be any set of $\L_y(A)$-formulas which defines this subset of $\M$. 
\end{definition}

\begin{observation}\label{V observation}
Suppose $I$ is $A$-$\bigvee$-definable. If $\M \models \forall x\left( \psi(x,c) \leftrightarrow \theta(x,d)\right)$, we have
\begin{align*}
\M \models \neg I x.\psi(x,c) & \Longleftrightarrow \psi(x,c) \text{ is }I\text{-wide} \\
                              & \Longleftrightarrow \theta(x,d) \text{ is }I\text{-wide} \\
                              & \Longleftrightarrow \M \models \neg I x.\theta(x,d).
\end{align*}
Because $\M$ is $\aleph_0 + |A|^+$-saturated, the same holds for $\frak{N}$. Indeed, assume for a contradiction that $\psi(x,c)$, $\theta(x,d) \in \L_x(N)$ are such that $\frak{N} \models \neg I x.\psi(x,c)$ and $\frak{N} \not\models \neg I x.\theta(x,d)$, yet $\frak{N} \models \forall x\left(\psi(x,c) \leftrightarrow \theta(x,d)\right)$. Since $\frak{N} \not\models \neg I x.\theta(x,d)$, there is a finite $q(z) \subset \neg I x.\theta(x,z)$ such that $\frak{N} \models \neg\bigwedge q(d)$. Consider the collection of $\L(A)$-formulas
\[\Gamma(y^\frown z) := \neg I x.\psi(x,y) \cup \left\{\neg\bigwedge q(z), \forall x\left(\psi(x,y)\leftrightarrow \theta(x,z)\right)\right\}.\]
$\Gamma(y^\frown z)$ is a partial type over $A$ because it is satisfied (in $\frak{N}$) by $(c,d)$. The assumption that $\M$ is an $\aleph_0 + |A|^+$-saturated elementary substructure of $\frak{N}$ delivers $c',d' \in M$ such that $\M \models \forall x\left(\psi(x,c')\leftrightarrow\theta(x,d')\right)$, $\M \models \neg I x.\psi(x,c')$, and $\M \not\models \neg I x.\theta(x,d')$. So $\psi(x,c')$ is $I$-thin and $\theta(x,d')$ is $I$-wide, yet they define the same subset of $\M$---contradiction.

It follows that the \emph{a priori} distinct subsets of $L_x(\frak{N})$ given by
\begin{align*}
I_0(\frak{N}) & := \left\{ X \in L_x(\frak{N}) : \frak{N}\not \models \neg I x.\theta(x,d) \text{ for \emph{every} } \theta(x,d) \in \L_x(N) \text{ whose } \sim_\frak{N}-\text{class is } X\right\} \text{ and } \\
I_1(\frak{N}) & := \left\{ X \in L_x(\frak{N}) : \frak{N}\not \models \neg I x.\theta(x,d) \text{ for \emph{some} } \theta(x,d) \in \L_x(N) \text{ whose } \sim_\frak{N}-\text{class is } X\right\}
\end{align*}
are in fact the same; let $I(\frak{N})$ denote the common value.
\end{observation}

\begin{lemma}\label{V extensions}
Let $\M \prec \frak{N}$ be an elementary extension, $A \subset M$, and $I \subset L_x(\M)$ an ideal. Suppose $\M$ is $\aleph_0 + |A|^+$-saturated. If $I$ is $A$-$\bigvee$-definable, then $I(\frak{N})$ is an $A$-$\bigvee$-definable ideal of $L_x(\frak{N})$ and $I = i_*^{-1}I(\frak{N})$.
\end{lemma}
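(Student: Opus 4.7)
The plan is threefold: verify that $I(\frak{N})$ is an ideal of $L_x(\frak{N})$, that it is $A$-$\bigvee$-definable, and that $I = i_*^{-1} I(\frak{N})$. The latter two claims are essentially tautological. For $A$-$\bigvee$-definability, the same partial type $\neg I x.\psi(x,y) \subset \L_y(A)$ that witnesses $A$-$\bigvee$-definability of $I$ in $\M$ defines $I(\frak{N})$-wideness in $\frak{N}$, by the very construction of $I(\frak{N})$ in Notation \ref{V nota}. For the identity $I = i_*^{-1}I(\frak{N})$, fix $\phi(x,c) \in \L_x(M)$: because $\neg I x.\phi(x,y)$ is a partial type in $\L_y(A)$ and $c \in M_y$, elementarity $\M \prec \frak{N}$ yields $\M \models \neg I x.\phi(x,c)$ iff $\frak{N} \models \neg I x.\phi(x,c)$, i.e.\ $[\phi(x,c)]_{\sim_\M} \in I$ iff $[\phi(x,c)]_{\sim_\frak{N}} \in I(\frak{N})$.

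The substantive step is verifying the ideal axioms for $I(\frak{N})$. The strategy mirrors the well-definedness argument already carried out in Notation \ref{V nota}: any purported failure of an ideal axiom in $L_x(\frak{N})$ can be packaged as a partial type $\Gamma$ over $A$ coding the failure, realized in $\frak{N}$, hence consistent with $\Th(\M/A) = \Th(\frak{N}/A)$, and therefore realized in $\M$ by $\aleph_0 + |A|^+$-saturation---yielding a corresponding failure in $L_x(\M)$ that contradicts the hypothesis that $I$ is an ideal. For closure under binary joins, suppose $[\psi_i(x,d_i)]_{\sim_\frak{N}} \in I(\frak{N})$ for $i = 1,2$, witnessed by single formulas $\chi_i(y_i) \in \neg I x.\psi_i$ with $\frak{N} \models \neg\chi_i(d_i)$. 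If $[\psi_1(x,d_1) \vee \psi_2(x,d_2)]_{\sim_\frak{N}} \notin I(\frak{N})$, then the $A$-type
\[
\Gamma(y_1,y_2) := \neg I x.\bigl(\psi_1(x,y_1) \vee \psi_2(x,y_2)\bigr) \cup \bigl\{\neg\chi_1(y_1),\, \neg\chi_2(y_2)\bigr\}
\]
is realized in $\frak{N}$ by $(d_1,d_2)$, hence in $\M$ by some $(c_1,c_2)$; but then $\psi_1(x,c_1) \vee \psi_2(x,c_2)$ is $I$-wide in $\M$ while neither disjunct is, contradicting that $I$ is an ideal. Downward closure is handled analogously, appending the auxiliary conjunct $\forall x\bigl(\theta(x,z) \to \psi(x,y)\bigr)$ to $\Gamma$. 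Finally, to obtain $[\bot]_{\sim_\frak{N}} \in I(\frak{N})$ one applies the same recipe to $\neg I x.\psi(x,y) \cup \{\neg\exists x\,\psi(x,y)\}$: it has no realization in $\M$ since $[\bot]_{\sim_\M} \in I$, hence by saturation is inconsistent with $\Th(\M/A)$, hence with $\Th(\frak{N}/A)$.

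I expect the only real obstacle to be bookkeeping: one must check in each saturation argument that the auxiliary type $\Gamma$ genuinely has parameters in $A$, not in $M \setminus A$ or $N \setminus M$, so that $\aleph_0 + |A|^+$-saturation of $\M$ applies. This is precisely what the $A$-$\bigvee$-definability hypothesis delivers: every ``wide'' or ``thin'' assertion enters $\Gamma$ through a set of $\L_y(A)$-formulas, and the only new parameters (the $d_i$ or $e$ from $N_y$) appear as the instantiating tuple of free variables, not inside $\Gamma$ itself.
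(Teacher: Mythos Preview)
Your proposal is correct and follows essentially the same approach as the paper's proof: both verify the ideal axioms for $I(\frak{N})$ by packaging any putative failure as a partial $A$-type realized in $\frak{N}$, pulling a realization back to $\M$ via $\aleph_0 + |A|^+$-saturation, and deriving a contradiction with the hypothesis that $I$ is an ideal of $L_x(\M)$. The only cosmetic difference is the order of presentation (the paper does downward closure first and says the join case is similar, whereas you do joins first and say downward closure is analogous); the arguments for $A$-$\bigvee$-definability and $I = i_*^{-1}I(\frak{N})$ are handled identically.
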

\begin{proof}
To ease notation, we assume $A = \emptyset$.

Let $D,E \in L_x(\frak{N})$. By the preceding observation, we may choose arbitrary representatives: let $\psi = \psi(x,y)$, $\theta = \theta(x,z) \in \L_x$ and assume $D$, $E$ are represented by $\psi(x,c)$ and $\theta(x,d)$, respectively. Suppose that $D \leq E$,\footnote{I.e., $\psi\left(\frak{N},c\right) \subset \theta\left(\frak{N},d\right)$.} so that $\frak{N} \models \forall x \left( \psi(x,c) \to \theta(x,d)\right)$, and assume $\frak{N} \models \neg I x.\psi(x,c)$. We want to show $\frak{N} \models \neg I x.\theta(x,d).$ For a contradiction, assume not. Then there is a finite $q(z) \subset \neg I x.\theta(x,z)$ such that $\frak{N} \models \neg \bigwedge q(d)$. Thus, the following set of formulas is satisfiable in $\frak{N}$:
\[\left\{ \forall x\left(\psi(x,y)\to\theta(x,z)\right) \right\} \cup \neg I x.\psi(x,y) \cup \left\{\neg\bigwedge q(z) \right\}.\]
The assumption that $\M$ is $\aleph_0 + |A|^+$-saturated delivers $c' \in \M_y$, $d' \in \M_z$ such that $\M \models \forall x\left( \psi(x,c')\to\theta(x,d')\right)$ and $\M \models \neg I x.\psi(x,c')$ yet $\M \models \neg \bigwedge q(d')$. As $c'$ satisfies $\neg I x.\psi(x,y)$ in $\M$, we have $[\psi(x,c')]_{\sim_{\M}} \notin I$ by definition of $\bigvee$-definability. But $[\psi(x,c')]_{\sim_{\M}}\leq [\theta(x,d')]_{\sim_{\M}}$ holds, and this implies $[\theta(x,d')]_{\sim_{\M}} \notin I$ because ideals are closed downwards. Therefore $\M \models \neg I x.\theta(x,d')$, yet $\M \models \neg \bigwedge q(d')$ for some finite $q(z) \subset \neg I x.\theta(x,z)$, which is absurd.

A similar compactness argument shows that $\frak{N} \not\models \neg I x.\psi(x,c)$ and $\frak{N}\not\models \neg I x.\theta(x,d)$ implies $\frak{N}\not\models \neg I x.\left( \psi(x,c)\vee\theta(x,d) \right)$,
i.e. $I(\frak{N})$ is closed under finite disjunctions. Likewise $[x\neq x]_{\sim_\N} \in I(\N)$.

Finally, we have
\[[\psi(x,c)]_{\sim_\M} \notin I \Longleftrightarrow \M \models \neg Ix.\psi(x,c) \Longleftrightarrow \N \models \neg Ix.\psi(x,c) \Longleftrightarrow [\psi(x,c)]_{\sim_\N} \notin I(\N)\]
and so $I = i_*^{-1}I(\frak{N})$.
\end{proof}

\begin{definition}
A \emph{preschema of partial types on $\L_x$} is a function
\[I : \L_x \to \P(\L), \psi(x,y) \mapsto \neg I x. \psi(x,y)\]
such that for each $\psi \in \L_x$ and $\theta \in \neg I x.\psi$, we have $\fv(\theta) \subset \fv(\psi) \setminus x$.

Let $I$ be a preschema of partial types on $\L_x$ and let $\M$ be an $\L$-structure. $I$ is \emph{$\sim_{\M}$-invariant} if for all $\psi(x,c),\theta(x,d) \in \L(M)$, if $\M \models \forall x \left( \psi(x,c) \leftrightarrow \theta(x,d)\right)$, then 
\[ \M \models \neg I x.\psi(x,c) \Longleftrightarrow \M \models \neg I x.\theta(x,d).\]
\end{definition}

Note that if $I$ is $\sim_\M$-invariant, then the set of $D \in L_x(\M)$ for which $\M \not\models \neg I x.\psi(x,c)$ for some $\psi(x,c) \in \L_x(M)$ with $[\psi(x,c)]_{\sim_\M} = D$ coincides with the set of $D \in \L_x(\M)$ for which $\M \not\models \neg Ix.\psi(x,c)$ for every such $\psi(x,c)$. Accordingly, $I(\M)$ denotes the common value.

\begin{definition}
$I$ \emph{$\bigvee$-defines an ideal of $L_x(\M)$} if it is $\sim_\M$-invariant and $I(\M) \subset L_x(\M)$ is an ideal.

Let $T$ be a complete $\L$-theory. A \emph{schema of $\bigvee$-definable ideals for $T$} in the context $x$ is a preschema $I : \L_x \to \P(\L)$ of partial types which $\bigvee$-defines an ideal of $L_x(\M)$ for every model $\M \models T$. We refer to the ideal $I(\M)$ as the \emph{$\M$-points of $I$}.

Let $I : \L_x \to \P(\L)$ be a schema of $\bigvee$-definable ideals for $T$. For a set $A$ of parameters in a model $\M$ of $T$, define $S^{I}_x(A,\M) := S^{I(\M)}_x(A,\M)$. Similarly, if $\Delta$ is a set of partitioned formulas in the context $x$, define $S^I_\Delta(A,\M) := S^{I(\M)}_\Delta(A,\M)$. 
\end{definition}

\begin{observation}
\begin{enumerate}[i.]
\item If $I$ $\bigvee$-defines an ideal of $L_x(\M)$, then the ideal $I(\M)$ is $\emptyset$-$\bigvee$-definable.

\item If $I$ is a schema of $\bigvee$-definable ideals for $T$, $\M$, $\frak{N}$ are models, and $f : \M \to \frak{N}$ is elementary, then $I(\M) = f_*^{-1}I(\frak{N})$.

\item Let $I$ be a schema of $\bigvee$-definable ideals for $T = \Th(\M) = \Th(\N)$. Let $p(x)$ be a partial type over a common set of parameters $A \subset M \cap N$. If $\M \equiv_A \N$, then $p(x)$ is $I(\M)$-wide if, and only if, it is $I(\frak{N})$-wide. In this case we simply say that $p(x)$ is \emph{$I$-wide}. Following the same reasoning, we find $S^{I}_x(A,\M) = S^{I}_x(A,\frak{N})$. The common value is thus denoted by $S^I_x(A)$ if the ambient model $\M$ is understood. Likewise for $S^I_\Delta(A)$ for $\Delta \subset \L_x$.

\item Suppose $\M$ is an $\omega$-saturated model of $T$ and $I \subset L_x(\M)$ is an $\emptyset$-$\bigvee$-definable ideal. For each $\psi = \psi(x,y)$, let $\pi_\psi(y)$ be any partial type over $\emptyset$ which defines $\left\{ a \in \M_y : [\psi(x,c)]_{\sim_\M} \notin I \right\}$. Then the mapping $\L_x \to \P(\L), \psi \mapsto \pi_\psi$ is a preschema of partial types on $\L_x$ which $\bigvee$-defines the ideal $I$ of $L_x(\M)$. The next result shows that any such mapping is, in fact, a schema of $\bigvee$-definable ideals for $T$.
\end{enumerate}
\end{observation}

\begin{theorem} \label{V schemas}
Let $I : \L_x \to \P(\L)$ be a preschema of partial types, $T$ a complete $\L$-theory, and $\M \models T$ an $\omega$-saturated model of $T$. If $I$ $\bigvee$-defines an ideal of $L_x(\M)$, then $I$ is a schema of $\bigvee$-definable ideals for $T$.
\end{theorem}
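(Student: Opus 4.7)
The plan is to reduce the statement to Lemma \ref{V extensions} by using elementary amalgamation to interpolate an arbitrary $\frak{N} \models T$ into a common elementary extension with $\M$, where all the necessary structure is already available. Fix $\frak{N} \models T$; we must verify $\sim_{\frak{N}}$-invariance of $I$ and that $I(\frak{N})$ is an ideal of $L_x(\frak{N})$. Since $T$ is complete, $\M \equiv \frak{N}$, so elementary amalgamation produces a structure $\frak{P}$ together with elementary embeddings $f : \M \to \frak{P}$ and $g : \frak{N} \to \frak{P}$. Identifying $\M$ with $f(\M)$, we have $\M \prec \frak{P}$. Since $\M$ is $\omega$-saturated and $I(\M)$ is a $\bigvee$-definable ideal of $L_x(\M)$ by hypothesis, Lemma \ref{V extensions} yields that $I(\frak{P})$ is a $\bigvee$-definable ideal of $L_x(\frak{P})$; in particular, $I$ is $\sim_{\frak{P}}$-invariant. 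Note that the preschema's definition of $I(\frak{P})$ coincides with the lemma's, since the preschema's $\neg I x. \psi(x,y)$ serves as a valid $\bigvee$-definition of the set of $c$ for which $\psi(x,c)$ is $I(\M)$-wide.

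The mechanism for transferring back down to $\frak{N}$ is the bidirectional equivalence
\[
\frak{N} \models \neg I x. \psi(x,c) \Longleftrightarrow \frak{P} \models \neg I x. \psi(x,g(c)),
\]
valid for all $\psi(x,y) \in \L_x$ and $c \in N_y$. Both sides unfold to ``$\chi(c)$ (resp.\ $\chi(g(c))$) holds for every $\chi(y) \in \neg I x. \psi(x,y)$'', and each $\chi$ is first-order, so the equivalence is immediate by elementarity of $g$ formula-by-formula.

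With this equivalence in hand, the remaining conditions transfer mechanically. For $\sim_{\frak{N}}$-invariance: if $\frak{N} \models \forall x\,(\psi(x,c) \leftrightarrow \theta(x,d))$, then by elementarity of $g$ we have $\frak{P} \models \forall x\,(\psi(x,g(c)) \leftrightarrow \theta(x,g(d)))$; $\sim_{\frak{P}}$-invariance then gives $\frak{P} \models \neg I x. \psi(x,g(c))$ iff $\frak{P} \models \neg I x. \theta(x,g(d))$, which the displayed equivalence converts into the analogous statement in $\frak{N}$. Once invariance is secured, $I(\frak{N})$ is well-defined, and its closure under finite joins, downward closure under $\leq$, and containment of $\bot$ all follow by the same three-step routine: translate the hypothesis up to $\frak{P}$ through $g$, invoke the corresponding property of the ideal $I(\frak{P})$, and translate the conclusion back via the displayed equivalence. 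The main conceptual obstacle --- already handled inside Lemma \ref{V extensions} --- is showing that $\omega$-saturation of $\M$ is strong enough to encode the $\bigvee$-definable structure of $I(\M)$ in a way that survives passage to arbitrary elementary extensions; here the only additional ingredient is elementary amalgamation to bring other models of $T$ into the picture.
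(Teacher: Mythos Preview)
Your proof is correct and follows essentially the same approach as the paper's: amalgamate $\M$ and $\frak{N}$ into a common elementary extension, apply Lemma \ref{V extensions} there using $\omega$-saturation of $\M$, and then pull the invariance and ideal axioms back to $\frak{N}$ along the elementary embedding $g$. Your explicit articulation of the transfer equivalence $\frak{N} \models \neg I x.\psi(x,c) \Leftrightarrow \frak{P} \models \neg I x.\psi(x,g(c))$ and your remark that the preschema's $I(\frak{P})$ agrees with the lemma's construction are helpful clarifications that the paper leaves implicit.
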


\begin{proof}
Let $\frak{N} \models T$. By elementary amalgamation, there exists $\frak{U}\models T$ which admits elementary embeddings $f:\M\to\frak{U}$ and $g:\frak{N} \to \frak{U}$. (The proof of) Lemma \ref{V extensions} applies: since $I$ $\bigvee$-defines an ideal of $L_x(\M)$ and $\M$ is $\omega$-saturated, it $\bigvee$-defines an ideal of $L_x(\frak{U})$. 

It remains to check that $I$ $\bigvee$-defines an ideal of $L_x(\frak{N})$. First, we have to check that $I$ is $\sim_\frak{N}$-invariant. Let $\psi(x,c),\theta(x,d) \in \L_x(N)$ be such that $\frak{N} \models \forall x \left( \psi(x,c) \leftrightarrow \theta(x,d)\right)$, and assume that $\frak{N} \models \neg I x.\psi(x,c)$. Then, as $g : \frak{N} \to \frak{U}$ is elementary, we have $\frak{U} \models \forall x \left( \psi(x,g(a)) \leftrightarrow \psi(x,g(b)) \right)$ and $\frak{U} \models \neg I x.\psi(x,g(a))$. Since $I$ is $\frak{U}$-invariant, it follows, $\frak{U} \models \neg I x.\psi(x,g(b))$, whence $\frak{N} \models \neg I x.\theta(x,d)$. A similar argument demonstrates that because $I(\frak{U}) \subset L_x(\frak{U})$ is an ideal, $I(\frak{N})\subset L_x(\frak{N})$ is also an ideal, as desired.
\end{proof}

Let $I$, $I'$ be schemas of $\bigvee$-definable ideals for $T$ in the context $x$. $I$, $I'$ are \emph{equivalent modulo $T$} if $I(\M) = I'(\M)$ for all $\M \models T$. If $\frak{U}$ is a fixed $\omega$-saturated model of $T$, Theorem \ref{V schemas} says that equivalence classes of schemas of $\bigvee$-definable ideals for $T$ are in one-to-one correspondence with $\emptyset$-$\bigvee$-definable ideals of $L_x(\frak{U})$.

\begin{example} \label{basic V example}
Let $D_n(x) \in \L_x(\emptyset)$ be a countable family of $\emptyset$-definable predicates in the same context $x$. For $\psi(x,y) \in \L_x$ define
\[\neg I x. \psi(x,y) := \left\{ \exists x \left( \psi(x,y) \wedge \bigwedge_{n \in K} D_n(x) \right) : K \subset \omega \text{ finite} \right\}.\]
It is straightforward to show that the preschema $I : \L_x \to \P(\L)$ of partial types so defined is a schema of $\bigvee$-definable ideals for any $T$.
\end{example}

\begin{example}
Consider the preschema of partial types $I : \L_x \to \P(\L)$ given by 
\[\psi(x,y) \mapsto \neg Ix.\psi(x,y) := \left\{ \exists x_0\dots x_{n-1} \left( \bigwedge_{i < j < n} x_i \neq x_j \wedge \psi(x_i,y) \right) : n \in \omega \right\}.\]
Let $T$ be any complete $\L$-theory, $\M \models T$. Clearly $I$ is $\sim_\M$-invariant; we want to show $I(\M)$ is an ideal. If $\psi(x,c)$, $\theta(x,d) \in \L_x(\M)$ are such that $\M \not\models \neg I x.\psi(x,c)$ and $\M \not\models \neg I x.\theta(x,d)$, then $\psi(x,c)$ and $\theta(x,d)$ define finite subsets of $\M$; thus, $\psi(x,c) \vee \theta(x,d)$ also defines a finite subset of $\M$, so its $\sim_\M$-class is in $I(\M)$. Similarly, $I(\M)$ is closed downwards under implication and contains $\bot$. Thus $I$ is a $\bigvee$-definable ideal for $T$. 

$I(\M)$ consists of the definable subsets of $\M_x$ which are algebraic, so for $a \in \M_x$, we have $a \notin \acl(A)$ if and only if $\tp(a/A)$ is $I$-wide. 
\end{example}

\begin{example} \label{delta example}
Let $\L$ be a language of finite signature. Consider the expansion $\L_0^+$ of $\L$ by a new sort $\OF$ which carries the signature $\{ +, \cdot, -, 0, 1, < \}$ of ordered fields, a partial unary map $\log : \OF_{> 0} \to \OF$, and, for each context $x$ of the home sort $\bbH$ and $\psi = \psi(x,y) \in \L_x$, an $l(y)$-ary map $f_\psi : \bbH^{l(y)} \to \OF$.

We can repeat this process to produce an expansion $\L_1^+$ which has such function symbols $f_\psi$ for partitioned formulas $\psi = \psi(x,y) \in \left(\L_0^+\right)_x$ whose context is of sort $\bbH$, but whose parameters are of either sort. Iterating countably many times obtains the expansion $\L^+$; it has the property that for each $x : \bbH$ and $\psi = \psi(x,y) \in \left(\L^+\right)_x$, we have a map $f_\psi$ from the appropriate sorts in the domain to $\OF$. Moreover, $\L^+$ is still countable. For $\psi = \psi(x,y) \in \left(\L^+\right)_x$, we write $|\psi(x,y)| := f_\psi(y)$.

Let $(\frak{G}_n : n \in \omega)$ be an increasing (i.e., $|\frak{G}_n| < |\frak{G}_{n+1}|$) sequence of finite $\L$-structures. Each $\frak{G}_n$ has a canonical expansion to an $\L_0^+$-structure $(\frak{G}_n, \bbR)_0$ by interpreting $\bbH$ as $\frak{G}_n$, $\OF$ as the ordered real field with the logarithm, and for each $\psi = \psi(x,y) \in \L_x$ the operation $f_\psi$ as the map
\[\left(\frak{G}_n\right)_y \to \bbR,\text{ } c \mapsto \left|\psi(\frak{G}_n,c)\right|.\]
In turn, we obtain an expansion of each $\left(\frak{G}_n,\bbR\right)_0$ to an $\L^+$-structure $(\frak{G},\bbR)$ defined in the analogous way.

Take a nonprincipal ultrafilter $\U$ on $\omega$. The ultraproduct $\left(\frak{G},\bbR^\U\right) := \prod_{n \to \U} (\frak{G}_n, \bbR)$ consists of a continuum-sized $\L$-structure $\frak{G} = \prod_{n \to \U} \frak{G}_n$ in the homesort and a nonstandard continuum-sized elementary extension $\bbR^\U$ of the ordered real field with the logarithm, along with functions $f_\psi(y)$ which track nonstandard finite cardinalities of $\L^+$-definable subsets of the home sort of $\left(\frak{G},\bbR^\U\right)$. Note that, as an $\L^+$-structure, $\left(\frak{G},\bbR^\U\right)$ is $\aleph_1$-saturated.

Fix a context $x : \bbH$ with $l(x) = 1$, and consider the preschema of partial types $\L^+_x \to \P(\L^+)$ defined by
\[ \psi(x,y) \mapsto \neg I x. \psi(x,y) := \left\{ \log|\psi(x,y)| \geq \left(1-\epsilon\right) \log|x=x| : \epsilon \in \bbQ, 0 < \epsilon < 1\right\}. \]
(This is well-defined because each element of $\bbQ$ is definable without parameters in $\L_0^+$.) In \cite{approxsg}, Hrushovski shows that this map $\bigvee$-defines an ideal of $L^+_x\left(\frak{G},\bbR^\U\right)$. Since $\left(\frak{G},\bbR^\U\right)$ is $\aleph_1$-saturated, by Theorem \ref{V schemas} it is a schema of $\bigvee$-definable ideals for the complete $\L^+$-theory of $\left(\frak{G},\bbR^\U\right)$. The ideal $I = I\left(\frak{G},\bbR^\U\right)$ is the ideal of $\L^+$-definable subsets $X$ of the home sort of coarse pseudo-finite dimension $\mathbf{\delta}(X) < 1$ (see \cite{approxsg} for details). 
\end{example}

\section{Local $I$-stability}
\subsection{The rank $R^I(-,\phi,2)$}

Fix a partitioned $\L$-formula $\phi = \phi(x,y)$ and a complete $\L$-theory $T$. Let $I : \L_x \to \P(\L)$ be a $\bigvee$-definable ideal schema for $T$.

\begin{definition}
Let $\M \models T$ be a model. The relation $R^{I(\M)}(-,\phi,2) \geq n$ is defined on formulas $\psi(x) \in \L_x(M)$ by induction on $n \in \omega$:
\begin{enumerate}[i.]
\item $R^{I(\M)}\left(\psi(x),\phi,2\right) \geq 0$ if, and only if, $[\psi]_{\sim_\M} \notin I(\M)$.
\item $R^{I(\M)}\left(\psi(x),\phi,2\right) \geq n+1$ if, and only if, there is an instance $\phi(x,c) \in \L_x(M)$ of $\phi$ such that \[R^{I(\M)}\left(\psi(x)\wedge \phi(x,c)^t,\phi,2\right) \geq n \] for each $t \in 2$.
\end{enumerate}
\end{definition}

\begin{remark} \label{rank remark}
Let $\psi(x),\theta(x) \in \L_x(M)$. By induction on $n \in \omega$, the following are readily established.
\begin{enumerate}[i.]
\item If $\M \models \forall x\left(\psi(x) \to \theta(x)\right)$, then $R^{I(\M)}\left(\psi(x),\phi,2\right) \geq n$ implies $R^{I(\M)}\left(\theta(x),\phi,2\right) \geq n$. In particular, if $\M \models \forall x\left(\psi(x) \leftrightarrow \theta(x)\right)$, then $R^{I(\M)}\left(\psi(x),\phi,2\right) \geq n$ $\Longleftrightarrow$ $R^{I(\M)}\left(\theta(x),\phi,2\right) \geq n$. 
\item If $R^{I(\M)}\left(\psi(x),\phi,2\right) \geq n$ then $R^{I(\M)}\left(\psi(x),\phi,2\right) \geq m$ for all $m < n$; so $\left\{n \in \omega : R^{I(\M)}\left(\psi(x),\phi,2\right) \geq n\right\}$ forms an initial segment of $\omega$. Therefore the following is well-defined.
\end{enumerate}
\end{remark}

\begin{definition}
For $\psi(x) \in \L_x(M)$, put
\begin{enumerate}[i.]
\item $R^{I(\M)}\left(\psi(x),\phi,2\right) := -\infty$ if $\psi(x) \in I(\M)$,
\item $R^{I(\M)}\left(\psi(x),\phi,2\right) := \infty$ if $R^{I(\M)}\left(\psi(x),\phi,2\right)\geq n$ for every $n \in \omega$,
\item and otherwise \[ R^{I(\M)}\left(\psi(x),\phi,2\right) := \inf \left\{ n \in \omega : R^{I(\M)}\left(\psi(x),\phi,2\right)\not\geq n+1\right\} = \sup \left\{ n \in \omega : R^{I(\M)}\left(\psi(x),\phi,2\right) \geq n \right\}. \]
\end{enumerate}
For a partial type $\pi(x)$, put \[ R^{I(\M)}\left(\pi(x),\phi,2\right) := \inf \left\{ R^{I(\M)}\left(\bigwedge q(x),\phi,2\right) : q(x)\subset \pi(x) \text{ finite}\right\}. \]

$\phi$ is \emph{$I$-stable in $\M$} if 
\[ R^{I(\M)}(x=x,\phi,2) < \infty. \]
$\phi$ is \emph{$I$-stable in $T$} if it is $I$-stable in every model of $T$.
\end{definition}

The rank $R^{I(\M)}(-,\phi,2)$ measures the number of times a definable subset of $\M$ can be iteratively split by instances of $\phi$ over $M$ which are wide and co-wide with respect to $I$. When $I = \left\{\bot\right\}$ is the trivial ideal, we recover Shelah's rank $R(-,\phi,2)$ \cite{ct}. Extending the analogy, we construct appropriate binary trees of collections of formulas whose consistency with $\Th(\M)$ controls the values of $R^{I(\M)}(-,\phi,2)$.

\begin{definition}
For a formula $\psi = \psi(x,z)$ and a nonzero ordinal $\alpha$, $\Tree_\psi[I,\phi,\alpha]$ denotes the following collection of formulas:
\[ \bigcup \left\{ \neg I x.\psi(x,z)\wedge \bigwedge_{l \in K }\phi(x,y_{\sigma\restrict l})^{\sigma[l]} 
: \sigma \in 2^\alpha, K \subset \alpha \text{ finite} \right\}. \]
(So the free variables of $\Tree_\psi[I,\phi,\alpha]$ are among those in $z^\frown (y_s : s \in 2^{<\alpha})$.) When $\alpha = 0$, we set
\[ \Tree_\psi[I,\phi,0] := \neg Ix.\psi(x,z). \]
$\M$ has \emph{wide $\phi$-trees of height $\alpha$ in $\psi(x,c)$} if it realizes $\Tree_{\psi(x,c)}[I,\phi,\alpha]$. Put \[ \Tree[I,\phi,\alpha]:= \Tree_{x=x}[I,\phi,\alpha]. \]
\end{definition}

\begin{proposition} \label{tree is rank}
$\text{}$
\begin{enumerate}[i.]
\item For $n \in \omega$, $\psi(x,c) \in \L_x(M)$, $R^{I(\M)}(\psi(x,c),\phi,2) \geq n \Longleftrightarrow \M \models \Tree_{\psi(x,c)}[I,\phi,n]$.
\item If $\M\models\Tree_{\psi(x,c)}[I,\phi,\omega]$ then $R^{I(\M)}(\psi(x,c),\phi,2) = \infty$. The converse holds if $\M$ is $\omega$-saturated.
\end{enumerate}
\end{proposition}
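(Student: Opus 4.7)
The plan is to prove (i) by induction on $n \in \omega$, with both directions handled together at the inductive step, and then deduce (ii) by extracting finite-height trees from the height-$\omega$ tree and invoking $\omega$-saturation. The base case $n = 0$ is immediate: $\Tree_{\psi(x,c)}[I,\phi,0]$ is by definition the partial type $\neg Ix.\psi(x,c)$, which is realized in $\M$ iff $[\psi(x,c)]_{\sim_\M} \notin I(\M)$ iff $R^{I(\M)}(\psi(x,c),\phi,2) \geq 0$.

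For the inductive step, both directions amount to matching the binary splitting in the definition of rank with the branching in the tree. For $(\Leftarrow)$ at level $n+1$: given parameters $(b_s)_{s \in 2^{<n+1}}$ realizing $\Tree_{\psi(x,c)}[I,\phi,n+1]$, the root $b_\emptyset$ plays the role of the splitting instance, and for each $t \in 2$ the subtree rooted at the child $t$—namely $(b_{t^\frown s'})_{s' \in 2^{<n}}$—realizes $\Tree_{\psi(x,c)\wedge\phi(x,b_\emptyset)^t}[I,\phi,n]$; by the inductive hypothesis, $R^{I(\M)}(\psi(x,c)\wedge\phi(x,b_\emptyset)^t,\phi,2) \geq n$ for both $t$, whence $R^{I(\M)}(\psi(x,c),\phi,2) \geq n+1$. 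For $(\Rightarrow)$: given $R \geq n+1$, pick a splitting instance $b_\emptyset$, apply the inductive hypothesis to produce realizations of the two height-$n$ subtrees for $\psi(x,c) \wedge \phi(x,b_\emptyset)^t$, and glue them into a height-$(n+1)$ tree by indexing the children of the root via $t$. The bookkeeping is to verify the constraint at a branch $\sigma = t^\frown\sigma'$ and subset $K \subset n+1$: when $0 \in K$ the constraint becomes one in the subtree for $\psi(x,c)\wedge\phi(x,b_\emptyset)^t$ directly; when $0 \notin K$, the constraint is weaker than the one obtained by adjoining $\phi(x,b_\emptyset)^{\sigma[0]}$, and thus follows from downward closure of $I(\M)$.

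For (ii), the forward implication is straightforward: a realization $(b_s)_{s \in 2^{<\omega}}$ of $\Tree_{\psi(x,c)}[I,\phi,\omega]$ restricts (via any extension of $\sigma \in 2^n$ to $2^\omega$ and the fact that $K \subset n$ for the height-$n$ case) to a realization of $\Tree_{\psi(x,c)}[I,\phi,n]$ for every $n$, so $R^{I(\M)}(\psi(x,c),\phi,2) \geq n$ for all $n$ by (i). Conversely, assume $\M$ is $\omega$-saturated and $R^{I(\M)}(\psi(x,c),\phi,2) = \infty$. By (i), $\Tree_{\psi(x,c)}[I,\phi,n]$ is realized for each $n$. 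Any finite subset of $\Tree_{\psi(x,c)}[I,\phi,\omega]$ references only finitely many pairs $(\sigma_i,K_i)$, and setting $N = 1 + \max_i \max K_i$ it reduces to a finite subset of the height-$N$ tree and is thus realized. Hence the partial type $\Tree_{\psi(x,c)}[I,\phi,\omega]$ over the finite parameter set $\{c\}$ is finitely satisfiable in $\M$, and by iterated use of $\omega$-saturation—realizing one parameter variable $y_s$ at a time while preserving finite satisfiability of the residual type, which always lives over a finite parameter set—it is realized in $\M$.

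The main obstacle is the bookkeeping in the inductive step of (i), specifically the "$0 \notin K$" case where one cannot match constraints directly to the subtree indexed by the child $t$ but must invoke downward closure of $I(\M)$. Once (i) is established, (ii) is a packaging exercise: finite-satisfiability of the height-$\omega$ type follows from (i) by compactness in the index parameters, and the iterative construction under $\omega$-saturation is standard, justified because each stage leaves a residual type finitely satisfiable over the growing finite parameter set.
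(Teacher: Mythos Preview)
Your proposal is correct and follows essentially the same approach as the paper: induction on $n$ for (i), with the root serving as the splitting instance and subtrees corresponding to the two children, and then compactness plus $\omega$-saturation for (ii). You are in fact more careful than the paper in two places: you explicitly flag the $0 \notin K$ case in the inductive step (where downward closure of $I(\M)$ is needed), which the paper glosses over, and you unpack the iterative realization of the countably-many-variable type under $\omega$-saturation, which the paper simply cites as a background fact (Fact~1.2(iv)).
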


\begin{proof}
\begin{enumerate}[i.]
\item Straightforward induction on $n \in \omega$.

\item If $\M$ realizes $\Tree_{\psi(x,c)}[I,\phi,\omega]$, then for every $n \in \omega$, $\M$ realizes $\Tree_{\psi(x,c)}[I,\phi,n]$. In view of (i), this is equivalent to $R^{I(\M)}(\psi(x,c),\phi,2)\geq n$ for every $n$, i.e. $R^{I(\M)}(\psi(x,c),\phi,2)=\infty$. Conversely, suppose $R^{I(\M)}(\psi(x,c),\phi,2) \geq n$ for all $n \in \omega$. By (i) again, we have $\M \models \Tree_{\psi(x,c)}[I,\phi,n]$ for every $n \in \omega$. Compactness dictates that 
\[ \Tree_{\psi(x,c)}[I,\phi,\omega] \]
is a type over $c$ with respect to $\M$. If $\M$ is $\omega$-saturated, it must realize this type.
\end{enumerate}
\end{proof}

\begin{proposition} \label{absoluteness}
Let $\M \prec \frak{N}$ be an elementary extension of models of $T$, and let $\pi(x)$ be a partial type.
\begin{enumerate}[i.]
\item $R^{I(\M)}(\pi(x),\phi,2) \leq R^{I(\frak{N})}(\pi(x),\phi,2)$. 
\item If $\M$ is $\omega$-saturated, then $R^{I(\M)}(\pi(x),\phi,2) = R^{I(\frak{N})}(\pi(x),\phi,2)$.
\item $\phi$ is $I$-stable in $T$ if, and only if, $R^{I(\M)}(x=x,\phi,2) < \infty$ for some $\omega$-saturated model $\M \models T$.
\end{enumerate}
\end{proposition}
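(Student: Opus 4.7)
The plan is to prove (i) by a direct induction on the rank, to handle (ii) by translating rank into realizability of a tree partial type and invoking $\omega$-saturation of $\M$, and then to deduce (iii) from (i) and (ii) via elementary amalgamation. Throughout, I would handle single formulas first and extend to partial types $\pi(x)$ at the end by taking infima over finite subtypes.

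For (i), I would induct on $n \in \omega$ to show $R^{I(\M)}(\psi(x),\phi,2) \geq n \Longrightarrow R^{I(\frak{N})}(\psi(x),\phi,2) \geq n$ for any $\psi(x) \in \L_x(M)$. The base case $n=0$ says $[\psi]_{\sim_\M} \notin I(\M) \Rightarrow [\psi]_{\sim_{\frak{N}}} \notin I(\frak{N})$, which is exactly the identity $I(\M) = i_*^{-1}I(\frak{N})$ from the observation on $\bigvee$-definable schemas. For the inductive step, any splitting parameter $c \in \M_y$ witnessing the rank bound at level $n+1$ in $\M$ still witnesses it in $\frak{N}$: applying the inductive hypothesis to $\psi(x) \wedge \phi(x,c)^t$ for each $t \in 2$ preserves the splitting.

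Part (ii) is the main obstacle, since a naive induction would require pulling back a splitting parameter $c \in N_y$ into $M$, with no reason that any such $c$ must lie in $M$. I would bypass this by routing through Proposition \ref{tree is rank}(i). Given $\psi(x,b) \in \L_x(M)$ with $R^{I(\frak{N})}(\psi(x,b),\phi,2) \geq n$, we have $\frak{N} \models \Tree_{\psi(x,b)}[I,\phi,n]$. This is a partial type in the finitely many parameter variables $(y_s : s \in 2^{<n})$ over the finite tuple $b \subset M$. Its finite subsets are realized in $\frak{N}$, hence are realized in $\M$ by elementarity (each finite subset is witnessed by an $\L(b)$-sentence $\exists (y_s)(\ldots)$ true in $\frak{N}$, hence in $\M$). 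Thus the whole tree is finitely realizable in $\M$ over the finite set $b$, and $\omega$-saturation of $\M$ yields a realization inside $\M$. Applying Proposition \ref{tree is rank}(i) again gives $R^{I(\M)}(\psi(x,b),\phi,2) \geq n$, establishing the converse to (i) and hence equality. The extension from formulas to partial types in both (i) and (ii) follows by taking infima over finite $q(x) \subset \pi(x)$ and noting that each $\bigwedge q(x)$ is a single formula to which the previous arguments apply.

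For (iii), the forward direction is the definition of $I$-stability in $T$. For the converse, suppose $\M$ is $\omega$-saturated with $R^{I(\M)}(x=x,\phi,2) < \infty$ and let $\frak{N} \models T$ be arbitrary. By elementary amalgamation over $\emptyset$ (Fact 1.3(iv)), there is $\frak{U} \models T$ admitting elementary embeddings of both $\M$ and $\frak{N}$. The image of $\M$ in $\frak{U}$ is $\omega$-saturated, so (ii) gives $R^{I(\frak{U})}(x=x,\phi,2) = R^{I(\M)}(x=x,\phi,2) < \infty$, and (i) applied to $\frak{N} \prec \frak{U}$ yields $R^{I(\frak{N})}(x=x,\phi,2) \leq R^{I(\frak{U})}(x=x,\phi,2) < \infty$. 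Hence $\phi$ is $I$-stable in every model of $T$. The essential technical work is concentrated in step (ii); (i) and (iii) are then formal consequences.
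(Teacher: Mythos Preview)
Your proposal is correct and follows essentially the same approach as the paper. Parts (i) and (ii) are handled identically: a direct induction on $n$ for (i), and for (ii) the translation of the rank bound into realizability of the finite tree type $\Tree_{\psi(x,b)}[I,\phi,n]$, then pulling the realization back into $\M$ via elementarity plus $\omega$-saturation. For (iii) your route via elementary amalgamation is a minor variant: the paper instead argues the contrapositive directly, observing that if some model has infinite rank then each $\Tree[I,\phi,n]$ is a type of $T$ (over $\emptyset$) and hence is realized in every $\omega$-saturated model. Both arguments amount to the same point, that consistency of $\Tree[I,\phi,n]$ depends only on $T$.
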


\begin{proof}
\begin{enumerate}[i.]
\item It suffices to consider the case when $\pi(x)$ consists of a single formula $\psi(x,c)$. A simple induction on $n \in \omega$ using Proposition \ref{tree is rank}.i shows that for all $\psi(x,c) \in \L_x(M)$, if $R^{I(\M)}(\psi(x,c),\phi,2)\geq n$ then $R^{I(\frak{N})}(\psi(x,c),\phi,2) \geq n$, as required.
\item If $R^{I(\frak{N})}(\psi(x,c),\phi,2)\geq n$, then $\frak{N}\models \Tree_{\psi(x,c)}[I,\phi,n]$. If $\M$ is $\omega$-saturated, it also realizes $\Tree_{\psi(x,c)}[I,\phi,n]$, which implies $R^{I(\M)}(\psi(x,c),\phi,2)\geq n$.
\item Suppose $R^{I(\M)}(x=x,\phi,2) = \infty$ for some $\M \models T$. By Proposition \ref{tree is rank}, $\M$ realizes $\Tree[I,\phi,n]$ for every $n \in \omega$. Thus $\Tree[I,\phi,n]$ is a consistent partial type in finitely many variables over a finite set of parameters, so it is realized in every $\omega$-saturated model $\frak{N}$ of $\Th(\M) = T$. By Proposition \ref{tree is rank} again, we have $R^{I(\frak{N})}(x=x,\phi,2) \geq n$ for every $n$, i.e. $R^{I(\frak{N})}(x=x,\phi,2) = \infty$.
\end{enumerate}
\end{proof}

The following makes sense in view of Proposition \ref{absoluteness} and elementary amalgamation.

\begin{definition}
For $\psi(x,c)\in \L_x(M)$, let 
\[ R^I(\psi(x,c),\phi,2) = R^{I(\frak{N})}(\psi(x,c),\phi,2) \]
where $\frak{N}$ is any $\omega$-saturated elementary extension of $\frak{M}$. Similarly, let 
\[ R^I(T,\phi,2) = R^{I(\M)}(x=x,\phi,2), \] 
where $\M$ is any $\omega$-saturated model of $T$.
\end{definition}

In particular, $\phi$ is $I$-stable in $T$ if and only if $R^I(T,\phi,2) < \infty$.
\subsection{The $I$-stable formula theorem for $\bigvee$-definable ideals}

Recall that Shelah's (un)stable formula theorem characterizes local stability in terms of bounds on the number of complete $\phi$-types, the inconsistency of infinite binary $\phi$-trees, definability of complete $\phi$-types over their own domains, and the absence of the order property \cite[Theorem II.2.2]{ct}. Theorem \ref{V-WUFT} extends this result to an approximate setting in the $\bigvee$-definable case. Under this assumption, the condition $R^I(x=x,\phi,2) \geq n$ is type-definable and thereby amenable to applications of compactness. Much of the proof is thus a minor variation of Shelah's, \emph{mutatis mutandis}, but there is a catch: the argument only guarantees definability of wide $\phi$-types over $\omega$-saturated models. We give a detailed proof of this direction to point out where this limitation comes into play; streamlined proofs of the other directions are included for completeness.

\begin{theorem} \label{V-WUFT}
Let $T$ be a complete $\L$-theory, $I$ a schema of $\bigvee$-definable ideals for $T$ in the context $x$, and $\phi = \phi(x,y)$ a partitioned formula. The following are equivalent:
\begin{enumerate}[a.]
\item For every $\lambda \geq 2^{|\L|}$, $\M \models T$, and $A \subset M$, $|A| \leq \lambda$ implies \[ \left| S^{I}_\phi(A, \M)\right| \leq \lambda. \]
\item There exists $\lambda \geq \aleph_0$ such that for every $\M \models T$ and $A \subset M$, $|A| \leq \lambda$ implies \[ \left| S^{I}_\phi(A, \M)\right| \leq \lambda. \]
\item $\Tree[I,\phi,\omega]$ is inconsistent with $T$.
\item $R^I(T,\phi,2)<\infty$.
\item For every $\omega$-saturated model $\M\models T$, every wide $\phi$-type $p \in S^{I}_\phi(\M)$ is $M$-definable.
\end{enumerate}
\end{theorem}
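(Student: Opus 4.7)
The plan is to prove the cycle (a) $\Rightarrow$ (b) $\Rightarrow$ (c) $\Leftrightarrow$ (d) $\Rightarrow$ (e) $\Rightarrow$ (a). Here (a) $\Rightarrow$ (b) is trivial, and (c) $\Leftrightarrow$ (d) is immediate from Proposition \ref{tree is rank} together with the definition of $R^I(T,\phi,2)$.

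For (b) $\Rightarrow$ (c), I argue the contrapositive. Assume $\Tree[I,\phi,\omega]$ is consistent with $T$; since every finite subset of $\Tree[I,\phi,\mu]$ mentions only finitely many tree nodes, compactness promotes this to consistency of $\Tree[I,\phi,\mu]$ with $T$ for every ordinal $\mu$. Fix $\lambda \geq \aleph_0$ arbitrary and let $\mu$ be the least cardinal with $2^\mu > \lambda$; by Cantor such $\mu$ exists and satisfies $\mu \leq \lambda$, while $2^\alpha \leq \lambda$ for all $\alpha < \mu$ gives $|2^{<\mu}| \leq \lambda$. Realizing $\Tree[I,\phi,\mu]$ in a sufficiently saturated $\M \models T$ produces $A = \{c_s : s \in 2^{<\mu}\}$ of size $\leq \lambda$ such that, for each branch $\sigma \in 2^\mu$, the partial type $\{\phi(x,c_{\sigma \restriction \alpha})^{\sigma(\alpha)} : \alpha < \mu\}$ is $I$-wide by the tree axioms and extends via Corollary \ref{I-BPI} to a wide complete $\phi$-type; distinct branches yield distinct such types, so $|S^I_\phi(A,\M)| \geq 2^\mu > \lambda$. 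Since $\lambda$ was arbitrary, (b) fails. I expect this to be the main obstacle: because (b) is existential, one must refute it at \emph{every} $\lambda$, and the choice of $\mu$ is precisely what makes the tree's $2^\mu$ branches outstrip its $\leq \lambda$ nodes.

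For (d) $\Rightarrow$ (e), let $\M$ be $\omega$-saturated and $p \in S^I_\phi(\M)$ wide. Pick a finite $q \subset p$ minimising $R^I(q,\phi,2) =: m$; this infimum is attained since ranks are integer-valued and bounded above by $R^I(T,\phi,2) < \infty$. A standard Shelah-style rank argument then yields $\phi(x,a) \in p$ iff $R^I(q \wedge \phi(x,a),\phi,2) \geq m$: forward by monotonicity of rank, converse because otherwise both $\phi$- and $\neg\phi$-splits of $q$ would have rank $\geq m$, forcing $R^I(q) \geq m+1$. By Proposition \ref{tree is rank}, the rank condition is the realizability in $\M$ of a finite-height $I$-wide tree, which --- since ``$I$-wide'' is partial-type-definable by the $\bigvee$-definability hypothesis on $I$ --- is itself a partial-type-definable condition on $a$ over $\dom(q)$. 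The symmetric argument for $\neg\phi$ shows $\phi(x,a) \notin p$ is likewise type-definable over $\dom(q)$. Since these two type-definable sets partition $\M_y$ and $\dom(q)$ is finite, compactness in the $\omega$-saturated $\M$ collapses each to honest definability by a single formula, and hence $p$ is $M$-definable.

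For (e) $\Rightarrow$ (a), fix $\lambda \geq 2^{|\L|}$, $\M \models T$, and $A \subset M$ with $|A| \leq \lambda$. Because $S^I_\phi(A,\M)$ depends only on $\Th(\M/A)$, I replace $\M$ with an $\omega$-saturated $\M' \equiv_A \M$ of cardinality $\leq |A| + 2^{|\L|} \leq \lambda$ (obtained by first taking a downward L\"owenheim--Skolem substructure containing $A$ and then passing to an $\omega$-saturated extension). Each $p \in S^I_\phi(A,\M')$ extends via Corollary \ref{I-BPI} to a wide complete $\phi$-type $p^*$ over $M'$, which by (e) is defined by some $\delta_{p^*}(y) \in \L_y(M')$; distinct $p$'s force distinct restrictions of $\delta_{p^*}$ to $A$, hence $\sim_{\M'}$-distinct $\delta_{p^*}$, giving $|S^I_\phi(A,\M)| \leq |\L_y(M')| \leq \lambda$.
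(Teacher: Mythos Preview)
Your proof is correct and follows essentially the same route as the paper: the same cycle of implications, the same tree-counting argument for (b) $\Rightarrow$ (c) via the minimal $\mu$ with $2^\mu > \lambda$, the same rank-minimization and separation-by-compactness argument for (d) $\Rightarrow$ (e), and the same L\"owenheim--Skolem plus $\omega$-saturation construction for (e) $\Rightarrow$ (a). The only cosmetic difference is that you package (c) $\Leftrightarrow$ (d) as a single appeal to Proposition \ref{tree is rank}, whereas the paper writes out (c) $\Rightarrow$ (d) as a separate step in the cycle.
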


\begin{proof}
(a)$\Rightarrow$(b) is trivial. For (b)$\Rightarrow$(c), suppose $\Tree[I,\phi,\omega]$ is satisfiable, fix $\lambda \geq \aleph_0$, and put
\[ \mu = \inf\left\{\kappa : 2^\kappa > \lambda\right\}; \]
so $\mu \leq \lambda < 2^\mu$. By compactness, the assumption $\neg$(c) implies that $\Tree[I,\phi,\mu]$ is a consistent partial type. Let $(b_s : s \in 2^{<\mu})$ be a realization of $\Tree[I,\phi,\mu]$ in a model $\M$. By definition, the paths $\sigma \in 2^\mu$ determine $I$-wide partial types $p_\sigma := \left\{ \phi(x,b_{\sigma\restrict\alpha})^{\sigma[\alpha]} : \alpha \in \mu \right\}$ which are pairwise (explicitly) contradictory. Taking wide completions obtains the bound $|B| \leq \mu \leq \lambda < 2^\mu = \left| S^I_\phi(B,\M)\right|$ where $B = \bigcup \left\{ b_s : s \in 2^{<\mu} \right\}$. (c)$\Rightarrow$(d) follows immediately from Propositions \ref{tree is rank}.ii and \ref{absoluteness}.ii.

For (d)$\Rightarrow$(e), suppose $R^I(x=x,\phi,2) < \infty$, fix an $\omega$-saturated model $\M$ of $T$, let $p \in S^I_\phi(\M)$ be a wide $\phi$-type over $M$, and put $k = R^I(p,\phi,2)$. Since $\M$ is $\omega$-saturated, $k = R^{I(\M)}(p,\phi,2)$; since $p$ is wide, $k \geq 0$; and by our assumption (d), $k < \omega$. Let $p_0 \subset p$ be a finite subset which satisfies
\[ R^I\left(\bigwedge p_0,\phi,2\right) = k, \]
and let $M_0 \subset M$ be the finite set of parameters which occur in $p_0$. We claim that for every $a \in \M_y$, 
\begin{align*}
\phi(x,a) \in p & \Longleftrightarrow \M \text{ realizes }\Tree_{p_0 \wedge \phi(x,a)}[I,\phi,k] \\
\neg\phi(x,a) \in p & \Longleftrightarrow \M \text{ realizes }\Tree_{p_0 \wedge \neg\phi(x,a)}[I,\phi,k].
\end{align*}
Indeed, if $\phi(x,a) \in p$, then we have
\begin{align*}
k = R^I(p_0,\phi,2) & \geq R^I(\bigwedge p_0\wedge\phi(x,a),\phi,2) \\
                    & \geq R^I(p \cup \left\{\phi(x,a)\right\},\phi,2) = R^I(p,\phi,2) = k.
\end{align*}
By Proposition \ref{tree is rank}.i, it follows that $\M$ realizes $\Tree_{p_0\wedge\phi(x,a)}[I,\phi,k]$. Conversely, suppose $\phi(x,a)\notin p$. As $p$ is a complete $\phi$-type over $M$, this implies $\neg\phi(x,a) \in p$. The argument above shows
\[ R^I(p_0\wedge\neg\phi(x,a),\phi,2) = k. \]
If $\Tree_{p_0\wedge\phi(x,a)}[I,\phi,k]$ were realized in $\M$, i.e. $R^I(p_0\wedge\phi(x,a),\phi,2) \geq k$, then $\phi(x,a)$ gives witness to
\[ R^I(p_0,\phi,2)\geq k+1 \]
which contradicts $R^I(p_0,\phi,2) = k$. The same reasoning applies to $\neg\phi(x,a)$, and we have proved our claim.

Let $z = (y_s : s \in 2^{<k})$ and let $\alpha(y,z)$ be the collection of formulas given by $\Tree_{p_0\wedge \phi(x,y)}[I,\phi,k]$. Consider \[ \tilde{\alpha}(y) := \left\{ \exists z \bigwedge q(y,z) : q(y,z) \subset \alpha(y,z) \text{ finite}\right\}. \]
Because $\M$ is $\omega$-saturated, for every $a \in \M_y$ we have
\[ \M \models \alpha(a,b) \text{ for some } b \in M^{2^{<k}} \Longleftrightarrow \M \models \tilde{\alpha}(a). \]
Thus $\phi(x,a) \in p$ holds if and only if $\M \models \tilde{\alpha}(a)$. Likewise, $\neg\phi(x,a) \in p$ holds if and only if $\M \models \tilde{\beta}(a)$, where $\beta(y,z) := \Tree_{p_0(x)\wedge\neg\phi(x,y)}[I,\phi,k]$ and
\[ \tilde{\beta}(y) := \left\{ \exists z \bigwedge q(y,z) : q(y,z) \subset \beta(y,z) \text{ finite}\right\}. \]
In other words,
\begin{align*}
\left\{ a \in \M_y : \phi(x,a) \in p\right\} & = \tilde{\alpha}(\M), \\
\left\{ a \in \M_y : \neg\phi(x,a) \in p \right\} & = \tilde{\beta}(\M)
\end{align*}
are disjoint subsets of $\M$ which are type-definable over the finite subset $M_0$. As $\M$ is $\omega$-saturated, it follows that $\tilde{\alpha}(y) \cup \tilde{\beta}(y)$ is inconsistent with $\Th(\M) = T$. By compactness, there are finite $\alpha_0 \subset \tilde{\alpha}$ and $\beta_0 \subset \tilde{\beta}$ such that \[ T \models \forall y\left( \bigwedge \alpha_0(y) \to \neg \bigwedge \beta_0(y)\right). \]
Let $d_px.\phi(x,y) \in \L_x(M_0)$ be given by $\bigwedge \alpha_0(y)$. Then we have
\begin{align*}
\phi(x,a) \in p(x) & \Longleftrightarrow \M \models \tilde{\alpha}(a) \Longrightarrow \M \models d_px.\phi(x,a), \\
\neg\phi(x,a) \in p(x) & \Longleftrightarrow \M \models \tilde{\beta}(a) \Longrightarrow \M \models \neg d_p x.\phi(x,a).
\end{align*}
Thus $p$ is $M$-definable, as required.

Finally, assume (e) and fix $\lambda \geq 2^{|\L|}$. Let $\M \models T$ be a model, and let $A \subset M$ be such that $|A| \leq \lambda$. Let $\M_0$ be an elementary substructure of $\M$ which contains $A$ and has cardinality at most $|A| + |\L| \leq |A| \leq \lambda$. $\M_0$ has an $\omega$-saturated elementary extension $\frak{U}$ of cardinality $|U|$ at most $|M_0| + 2^{|\L|} \leq \lambda$. For each $p' \in S^I_\phi(A,\M) = S^I_\phi(A,\frak{U})$, choose a wide completion $p \in S^I_\phi(\frak{U})$; the map \[ S^I_\phi(A,\frak{U}) \to S^I_\phi(\frak{U}),p' \mapsto p \] is injective. By assumption (e), every $p \in S^I_\phi(\frak{U})$ is $U$-definable, and the resulting map $S^I_\phi(\frak{U}) \to \L_y(U)$ given by $p \mapsto d_px.\phi(x,y)$ is also injective. Composing these maps, we obtain the bound 
\[ \left| S^I_\phi(A,\M)\right| = \left| S^I_\phi(A,\frak{P})\right|\leq |\L(U)| \leq |L| + 2^{|\L|} \leq \lambda. \]
This completes the proof.
\end{proof}

In part, Theorem \ref{V-WUFT} characterizes local stability modulo a schema of $\bigvee$-definable ideals in terms of counting wide $\phi$-types. Applying the Erd\H{o}s--Makkai theorem to this characterization informs a natural generalization of Shelah's order property.

\begin{definition}
Let $\M$ be an $\L$-structure. $\M$ \emph{realizes an infinite wide $\phi$-order} (with respect to $I$) if there exist sequences $\b{a} \in \left( \M_x\right)^\omega$, $\b{b} \in \left( \M_y \right)^\omega$ such that for all $i,j \in \omega$ we have $\M \models \phi(a_i,b_j)^{\text{if } i < j}$ and $\tp_\phi\left(a_i / \bigcup\left\{b_i : i \in \omega\right\}\right)$ is $I$-wide.

$\phi$ has the \emph{wide order property} in $T$ with respect to $I$ if there exists a model $\M \models T$ which realizes an infinite $I$-wide $\phi$-order.
\end{definition}

To make this definition amenable to applications of the standard lemma on stretching sequences of indiscernibles, we introduce some notation.

\begin{notation}
For a linear order $X$, $K \subset X$, and $i \in X$, define $H_{i,X} = \left\{\phi(x,y_j)^{\text{if }i < j} : j \in X \right\}$ and 
\begin{align*}
H_{I,\phi,X} & = \bigcup\left\{\neg Ix_i.\bigwedge H_{i,K}(x_i) : i \in \omega, K \subset X \text{ finite}\right\}.
\end{align*}
\end{notation}

\begin{observation}
    \label{orders observation}
\begin{enumerate}[i.]
\item $H_{I,\phi,\omega}$ is satisfiable if and only if $H_{I,\phi,N}$ is satisfiable for all $N \in \omega$, if and only if $H_{I,\phi,X}$ is satisfiable for all linear orders $X$. This follows from compactness and the assumption that $I$ is $\bigvee$-definable.
\item By Corollary \ref{wide types}, $\phi$ does not have the wide order property in $T$ if and only if there exists an $\aleph_1$-saturated model of $T$ which does not realize an infinite wide $\phi$-order.
\item By (i) and (ii), $\phi$ has the wide order property if and only if $\neg\phi$ has the wide order property.
\item Let $A$ be a subset of a model of $T$. If $\phi$ has the wide order property in $T$, then in any $\aleph_0 + |A|^+$-saturated model, there exists an $A$-indiscernible sequence $\b{b} = (b_i : i \in \omega)$ and a sequence $\b{a} = (a_i : i \in \omega)$ such that $\b{a}^\frown\b{b}$ realizes an $I$-wide $\phi$-order. This follows from the standard lemma.
\end{enumerate}
\end{observation}

\begin{theorem} \label{V-WUFT_2}
The following are equivalent:
\begin{enumerate}[a.]
\item $\phi$ is $I$-stable.
\item $\phi$ does not have the wide order property with respect to $I$.
\end{enumerate}
\end{theorem}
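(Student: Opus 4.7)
The approach is to prove both directions via Theorem \ref{V-WUFT}: specifically, I use the equivalence of $I$-stability with the inconsistency of $\Tree[I,\phi,\omega]$ (item (c)) for one direction, and with counting wide types (item (b)) for the other. First I show that the wide order property implies non-$I$-stability by constructing a realization of $\Tree[I,\phi,\omega]$; then I prove the converse by extracting a wide order from a large Stone space of wide types via Erd\H{o}s--Makkai.

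For $\neg$(b) $\Rightarrow$ $\neg$(a), I start with a wide $\phi$-order witnessed by sequences $\b{a},\b{b}$ in a sufficiently saturated $\frak{U}\models T$. By observation (iv), I may assume $\b{b}=(b_i:i\in\omega)$ is $\emptyset$-indiscernible. Combining the wideness of each $\tp_\phi(a_i/\b{b})$ with the order pattern $\phi(a_i,b_j)\iff i<j$ and the indiscernibility of $\b{b}$, the $\bigvee$-definability of $I$ yields a uniform wide-cut-pattern property: for every finite $F=\{j_1<\cdots<j_n\}\subset\omega$ and every $k\in\{0,\dots,n\}$, the formula $\bigwedge_{l\leq k}\neg\phi(x,b_{j_l})\wedge\bigwedge_{l>k}\phi(x,b_{j_l})$ is $I$-wide. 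By compactness I extend $\b{b}$ to a $\bbQ$-indexed indiscernible $(b_q:q\in\bbQ)$ retaining the analogous cut-pattern wideness for every finite $F\subset\bbQ$ and every cut. Then I choose a suitable dyadic embedding $s\mapsto q_s$ of $2^{<\omega}$ into $\bbQ$ (midpoints of nested dyadic intervals, with appropriate sign convention) so that for every $\eta\in 2^\omega$ and every $N\in\omega$, sorting $\{q_{\eta\restrict n}:n<N\}$ separates the indices with $\eta[n]=0$ from those with $\eta[n]=1$ into two adjacent blocks. Setting $c_s:=b_{q_s}$ makes every finite path conjunction $\bigwedge_{n<N}\phi(x,c_{\eta\restrict n})^{\eta[n]}$ an instance of the wide cut-pattern; hence $(c_s)_{s\in 2^{<\omega}}$ realizes $\Tree[I,\phi,\omega]$, and Theorem \ref{V-WUFT}(c) yields non-$I$-stability.

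For $\neg$(a) $\Rightarrow$ $\neg$(b), assume $\phi$ is not $I$-stable. By Theorem \ref{V-WUFT}(b) with $\lambda=\aleph_0$, fix a model $\frak{M}\models T$ and a countable $A\subset M$ with $|S^I_\phi(A,\frak{M})|>\aleph_0=|A^{|y|}|$. For each $p\in S^I_\phi(A,\frak{M})$ set $D_p:=\{b\in A^{|y|}:\phi(x,b)\in p\}$; distinct wide types give distinct $D_p$'s, so $\mathcal{S}:=\{D_p\}\subset\mathcal{P}(A^{|y|})$ has cardinality exceeding $|A^{|y|}|$. Applying the Erd\H{o}s--Makkai theorem produces $(b_i)\in(A^{|y|})^\omega$ and types $(p_i)$ exhibiting one of the two monotone incidence patterns. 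Realizing each $p_j$ by a wide element $a_j$ in a sufficiently saturated extension $\frak{U}\succ\frak{M}$, the ``forward'' pattern $\phi(a_j,b_i)\iff j<i$ directly gives WOP, since $\tp_\phi(a_j/\{b_i\})\subset\tp_\phi(a_j/A)=p_j$ is $I$-wide. The ``reversed'' pattern $\phi(a_j,b_i)\iff i<j$ is handled by applying the standard lemma to the EM-type of $(a_n,b_n)_{n\in\omega}$ (the wideness conditions being part of the EM-type by $\bigvee$-definability), realizing it as a $\bbZ$-indexed indiscernible $(\tilde a_z,\tilde b_z)_{z\in\bbZ}$, and reindexing via $\alpha_n:=\tilde a_{-n},\beta_n:=\tilde b_{-n}$ to recover the forward WOP for $\phi$.

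The main obstacle is verifying wideness of path conjunctions in the forward direction's tree construction. A priori, wideness of the one-dimensional types $\tp_\phi(a_i/\b{b})$ only controls individual formulas $\phi(x,b_j)^{\pm}$ taken from the $\b{b}$-sequence; the tree requires wideness of every finite Boolean conjunction along every binary path. The enabling technical tool is $\bigvee$-definability of $I$, which renders wideness type-definable on parameters; this lets wideness be encoded into the EM-type of $\b{b}$, propagated through indiscernible stretching, and finally realized along every Dedekind cut of a dense indexing, so that any path in $2^\omega$ reduces to a cut in $\bbQ$ that furnishes a wide formula.
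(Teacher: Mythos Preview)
Your proof is correct, and the $\neg(a)\Rightarrow\neg(b)$ direction is essentially the paper's argument: both apply Erd\H{o}s--Makkai to the family $\{D_p : p \in S^I_\phi(A)\}$ and then realize the types widely. The paper dispatches the ``reversed'' pattern more cheaply, simply observing that it is a wide $\neg\phi$-order and invoking observation~(iii); your $\bbZ$-stretching and reindexing is a correct but more hands-on way to produce the same conclusion.

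The $\neg(b)\Rightarrow\neg(a)$ direction is where you genuinely diverge. The paper does not build a tree at all: it instead shows that $H_{I,\phi,X}$ is a type for every linear order $X$, takes $X$ of size $\ded(\lambda)$ with a dense subset $Q$ of size $\lambda$, and observes that each point of $X\setminus Q$ determines a distinct $I$-wide partial $\phi$-type over $\bigcup\{b_q:q\in Q\}$, giving $|S^I_\phi(B)|>\lambda$ and contradicting Theorem~\ref{V-WUFT}(b). Your route---extracting an indiscernible $\b{b}$, pushing cut-pattern wideness into the EM-type, stretching to $\bbQ$, and then reading off $\Tree[I,\phi,\omega]$ via a dyadic embedding whose paths sort into cuts---is more constructive and ties the wide order property directly to the tree characterization (Theorem~\ref{V-WUFT}(c)) rather than to type-counting. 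The paper's argument is shorter and avoids the combinatorics of the embedding; yours gives a more explicit object (an actual wide tree) and makes the connection between half-graphs and binary trees transparent. Both rely in the same essential way on $\bigvee$-definability to encode wideness into EM-types or into the partial types $H_{I,\phi,X}$.
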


\begin{proof}
If (a) fails, there exists a model $\frak{U}$ and a subset $B \subset U$ such that $|S^I_\phi(B)| > \aleph_0 \geq |B|$. For each $p \in S^I_\phi(B)$ consider $X_p := \left\{ b \in B_y : \phi(x,b) \in p \right\}$ and $\mathcal{X} := \left\{ X_p : p \in S^I_\phi(B) \right\}$.
As any two distinct elements of $S^I_\phi(B)$ are explicitly contradictory, we have $|\mathcal{X}| = |S^I_\phi(B)| > \aleph_0 \geq |B|$. The Erd\H{o}s--Makkai theorem delivers, for $i \in \omega$, $p_i \in S^I_\phi(B)$ and $b_i \in B_y$ such that either
\begin{enumerate}[(i)]
\item for all $i,j$, $\phi(x,b_j) \in p_i \Longleftrightarrow i < j$, or 
\item for all $i,j$, $\phi(x,b_j) \notin p_i \Longleftrightarrow i < j$.
\end{enumerate}
Take realizations $a_i \models p_i$ in some sufficiently saturated elementary extension $\frak{W}$. If (i) holds, then the sequences $(a_i : i \in \omega)$, $(b_i : i \in \omega)$ realizes an infinite wide $\phi$-order in $\frak{W}$; if (ii) holds, then these sequences realize an infinite wide $\neg\phi$-order in $\frak{W}$. By Observation \ref{orders observation}.iii, it follows that in $T$, $\phi$ has the wide order property with respect to $I$, as desired.

Now assume $\phi$ has the wide order property in $T$ with respect to $I$. By Observation \ref{orders observation}.i, $H_{I,\phi,X}$ is satisfiable for every linear order $X$. Let $\lambda \geq \aleph_0$ and let $\kappa = \ded(\lambda)$ be its Dedekind number, i.e.
\[ \ded(\lambda) = \sup \left\{ |J| : J \text{ is a linear order with a dense subset of cardinality} \lambda\right\}. \]
Note $\lambda < \kappa$. Let $X$ be a linear order of cardinality $\kappa$ with a dense subset $Q$ of cardinality $\lambda$, and let $\frak{U}\models T$ be a sufficiently saturated model, so that it contains a realization $(b_r : r \in X)$ of $H_{I,\phi,X}$. For $r \in X \setminus Q$, consider the partial type $p_r(x) = \left\{\phi(x,b_q)^{\text{if } r < q} : q \in Q\right\}$ and let $B = \bigcup\left\{b_q : q \in Q\right\}$. Since $(b_r : r \in X)$ realizes $H_{I,\phi,X}$ in $\frak{U}$, each $p_r(x)$ is $I$-wide and thus extends to a complete $I$-wide $\phi$-type over $B$. Moreover, $r \neq r'$ implies $p_r \neq p_{r'}$. We conclude $\left|S^I_\phi(B)\right| \geq \kappa > \lambda \geq |B|$. As $\lambda \geq \aleph_0$ was arbitrary, it follows that $\phi$ is $I$-unstable in $T$ in view of Theorem \ref{V-WUFT}.
\end{proof}

\begin{example}
We work in the first-order language $\L$ of graphs. Let $L_n$ denote the ladder of height $n$ (in which each side is an independent set) and let $K_n$ denote the complete graph on $n$ vertices. Consider $G_n = K_{2^n} \sqcup L_n$. According to {\strokeL}o\'s's theorem, the edge relation $E(x,y)$ is unstable in $(G,\bbR^\U)$. However, $E(x,y)$ is $I$-stable in $(G,\bbR^\U)$, where $I$ is the schema of $\bigvee$-definable ideals defined in Example \ref{delta example}. For a contradiction, assume not. As $I$ is $\bigvee$-definable over a countable set of parameters and $(G,\bbR^\U)$ is $\aleph_1$-saturated, our assumption and Theorem \ref{V-WUFT_2} imply the existence of a sequence $(b_i : i \in \omega)$ in $G$ such that for each $i$, $E_i(x) = \{ E(x,b_j)^{\text{if } i \leq j} : j \in \omega \}$
is $I$-FMP. For each $j$ choose a representing sequence $(b^n_j : n \in \omega) \in \prod_{n \in \omega} G_n$ for $b_j$. For $N > 0$ put $b^n_{<N} = (b^n_j : j < N)$ and $H_N(x_0,\dots,x_{N-1},y_0,\dots,y_{N-1}) := \bigwedge_{i, j < N} E(x_i,y_j)^{\text{if } i \leq j}$.
Let $N \geq 4$.
\begin{claim}
$\{n \in \omega : b^n_{<N} \subset L_n\} \in \U$.
\end{claim}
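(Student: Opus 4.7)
The plan is to exploit the $I$-FMP of $E_N$ (i.e., the case $i = N$) applied to the finite subset $K = \{0, 1, \dots, N-1\} \subset \omega$. Since every $j \in K$ satisfies $j < N$, every conjunct $E(x, b_j)^{\text{if } N \leq j}$ collapses to its negation, so the relevant formula is the purely negative
\[
\chi(x) := \bigwedge_{j < N} \neg E(x, b_j),
\]
and the hypothesis immediately yields that $\chi(x)$ is $I$-wide in $(\frak{G}, \bbR^\U)$.

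Next, I would unfold $I$-wideness via the $\bigvee$-preschema from Example \ref{delta example},
\[
\neg Ix.\psi(x,y) = \left\{ \log|\psi(x,y)| \geq (1-\epsilon) \log|x = x| : \epsilon \in \bbQ,\ 0 < \epsilon < 1 \right\},
\]
and transfer to the components by \L{}o\'s's theorem. Wideness of $\chi(x, b_{<N})$ then translates to: for every rational $\epsilon \in (0,1)$,
\[
\left\{n \in \omega : |\chi(G_n, b^n_{<N})| \geq |G_n|^{1-\epsilon}\right\} \in \U.
\]
Specializing to $\epsilon = 1/2$ and using that $|G_n|^{1/2} \to \infty$ as $n \to \infty$, this isolates a $\U$-large set on which $|\chi(G_n, b^n_{<N})| > 2n + 1$.

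The final step is an elementary graph-theoretic bound: in $G_n = K_{2^n} \sqcup L_n$, if some $b^n_j \in K_{2^n}$ for $j < N$, then the only non-neighbors of $b^n_j$ in $G_n$ are $b^n_j$ itself and the vertices of $L_n$, because the two components are disconnected and $K_{2^n}$ is complete. Hence
\[
|\chi(G_n, b^n_{<N})| \leq |L_n| + 1 = 2n + 1,
\]
which contradicts the lower bound from the previous paragraph on the $\U$-large set. Therefore $b^n_{<N} \subset L_n$ for $\U$-almost every $n$, establishing the claim.

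There is no real obstacle here: the only point requiring attention is carefully translating the nonstandard $I$-wideness condition in $(\frak{G}, \bbR^\U)$ into the componentwise pseudofinite cardinality bound, and once that translation is made, the gap between $|G_n|^{1/2}$ and $2n+1$ forces the conclusion immediately.
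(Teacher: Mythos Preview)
Your argument is correct and takes a genuinely different route from the paper's. The paper does not use the quantitative content of $I$-wideness at all for this claim: instead it exhibits an $\L$-formula $\chi(x)$ that uniformly defines the clique part $K_{2^n}$ in each $G_n$, uses only the \emph{consistency} of $H_N(\b{x},b^n_{<N})$ on a $\U$-large set, and then argues combinatorially that any half-graph configuration is path-connected, so if one $b^n_j$ lands in $K_{2^n}$ they all do, as must any witnessing $\b{a}$---contradicting the non-edge constraints of $H_N$ inside a complete graph. Your approach instead invokes the $I$-FMP of $E_N$ on the purely negative conjunction $\bigwedge_{j<N}\neg E(x,b_j)$, transfers the resulting pseudofinite lower bound $|G_n|^{1/2}$ to the components via \L o\'s, and compares it to the trivial upper bound $2n+1$ on the common non-neighbors of any vertex in $K_{2^n}$. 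This is shorter, uses the ideal $I$ in an essential way, and incidentally does not need the hypothesis $N\geq 4$ (or even $N\geq 2$) that the paper's path-connectedness step requires; on the other hand, the paper's argument is purely first-order and would survive replacing $I$ by any reasonable notion of non-triviality.
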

\begin{proof}[Proof of claim]
Consider the formula
$$\chi(x) := \forall y \left( E(x,y) \to \left(\forall z \left( z \neq x \wedge E(y,z) \to E(x,z)\right)\right)\right).$$
Notice that for each $n$, $G_n \models \chi(v)$ if, and only if, $v \in K_{2^n}$. For a contradiction, assume the claim fails, so that $N_1 := \{ n \in \omega : b^n_{<N} \cap K_{2^n} \neq \emptyset\} \in \U$. By our choice of $b_{<N}$, an application of {\strokeL}o\'s's theorem gives $N_2 := \{n \in \omega : G_n \models \exists \b{x} H_N(\b{x},b^n_{<N})\} \in \U$. Fix $n \in N_1 \cap N_2$, so that $G_n \models \exists \b{x}H_N(\b{x},b^n_{<N})$ and for some $j < N$, $G_n \models \chi(b^n_j)$. But any two vertices in an instance of $H_N(\b{x},\b{y})$ are joined by a path, so in fact for every $j < N$ we must have $G_n \models \chi(b^n_j)$, i.e.
$$N_1 \cap N_2 = \{n \in \omega : G_n \models \exists\b{x}H_N(\b{x},b^n_{<N}) \text{ and } b^n_{<N} \subset K_{2^n} \}.$$
Moreover if $n \in N_1 \cap N_2$ and $G_n \models H_N(a^n_{<N},b^n_{<N})$ then the same reasoning shows $a^n_{<N} \subset K_{2^n}$ as well. But as long as $N \geq 2$, $H_N(\b{x},\b{y})$ will stipulate at least one non-edge requirement between the $a^n_{<N}$ and $b^n_{<N}$; yet $a^n_{<N}$ and $b^n_{<N}$ are contained in $K_{2^n}$. This is absurd, and the proof is complete.
\end{proof}

Now let $n$ be such that 
$$\left(\dag\right) \hspace{0.5cm} G_n \models \exists\b{x}H_N(\b{x},b^n_{<N}) \text{ and } b^n_{<N} \subset L_n.$$
We want to estimate $\log\left|H_N(G_n^N,b^n_{<N})\right|/\log\left|G^N_n\right|$. For each $i < N-1$, $\left|E_i(G_n,b^n)\right| \leq n$ holds whenever $n$ satisfies $\left(\dag\right)$. For $i = N-1$, $\left|E_{N-1}(G_n,b^n_{<N})\right|\leq 2^n$. Thus,
\begin{align*}
    \frac{\log\left|H_N(G_n^N,b^n_{<N})\right|}{\log\left|G_n^N\right|} & \leq \frac{\log(n^{N-1})+ \log(2^n)}
    {\log\left((2^n+2n)^N\right)} \\ & \leq \frac{N-1}{N}\frac{\log n}{\log(2^n + 2n)} + \frac{1}{N} \leq \frac{\log n}{\log(2^n + 2n)} + \frac{1}{4}. \\
\end{align*}
Let $M$ be such that $n \geq M$ implies $\log n/\log(2^n+2n) < \frac{1}{4}$. If we put
\begin{align*}
    N_3 & = \{ n \in \omega : G_n \models \exists\b{x}H_N(\b{x},b^n_{<N}) \text{ and }
    b^n_{<N} \subset L_n\} \cap [M,\omega) \\
    N_4 & = \bigg\{ n \in \omega : (G_n,\bbR) \models \frac{\log\left|H_N(G^N_n,b^n_{<N})\right|}{\log\left|G^N\right|} < \frac{1}{2} \bigg\}
\end{align*}
then what we just showed amounts to $N_3 \subset N_4$. $N_3 \in \U$ follows from the claim above; hence $N_4 \in \U$ holds as well. On the other hand, recalling that $(b_j : j \in \omega)$ witnesses the $I$-wide order property for the edge relation $E(x,y)$ in the structure $(G,\bbR^\U)$, for each $N \in \omega$ we have
$$\st\left( \frac{\log\left|H_N(G^N,b_{<N})\right|}{\log\left|G^N\right|}\right) = 
\frac{1}{N}\delta\left( \prod_{i < N} E_i(G,b_{<N})\right) = 
\frac{1}{N} \sum_{i < N} \delta\left(E_i(G,b_{<N})\right) = 1,$$
by definition of $I$. In particular, $(G,\bbR^\U) \models \log\left|H_N(G^N,b_{<N})\right|/\log\left|G^N\right| \geq \frac{1}{2}$ holds. By {\strokeL}o\'s's theorem, it follows,
$$\left\{ n \in \omega : (G_n,\bbR^\U) \models \frac{\log\left|H_N(G^N_n,b^n_{<N})\right|}{\log\left|G^N\right|} \geq \frac{1}{2} \right\} \in \U.$$
We conclude $\emptyset \in \U$, a contradiction. 
\end{example}
\subsection{The $I$-stable formula theorem fails for $\bigwedge$-definable ideals}

The proof of Theorem \ref{V-WUFT} employs compactness arguments that make critical use of the assumption that $I$ is $\bigvee$-definable. In fact, the statement Theorem \ref{V-WUFT} becomes false in general after substituting ``$\bigwedge$-definable" for ``$\bigvee$-definable," as the following example due to L.C.\ Brown shows.

We only defined the rank $R^I(-,\phi,2)$ with respect to a schema of $\bigvee$-definable ideals, but the same definition works for a schema of $\bigwedge$-definable ideals. Under this modification, the statement ``$R^I(-,\phi,2) \geq \alpha$" is no longer type-definable, but rather $\bigvee$-definable. In the case of $\bigwedge$-definable ideal schema, we instead say that $\M$ \emph{has $I$-wide $\phi$-trees of height $\alpha$ in $\psi$} if there is a tree $\b{b}=(b_s : s \in 2^{<\alpha})$ such that for each finite $K \subset \alpha$ and $\sigma \in 2^\alpha$ there is a finite 
$$q_{\sigma,K}(\b{y}) \subset Ix.\psi(x)\wedge\bigwedge_{k \in K} \phi(x,y_{\sigma\restrict k})^{\sigma[k]}$$
such that $\M \models \neg \bigwedge q_{\sigma,K}(\b{b})$.

Note that the statements of Remark \ref{rank remark} and Proposition \ref{tree is rank}.i hold when ``$\bigwedge$-definable'' is substituted for ``$\bigvee$-definable." Stability modulo $I$ is defined in the same way, too: $\phi(x,y)$ is \emph{$I$-stable} in $T$ if $R^{I(\M)}(x=x,\phi,2) < \infty$ for every model $\M$. However, Proposition \ref{tree is rank}.ii and thus Theorem \ref{V-WUFT} do not hold under this substitution:

\begin{example} \label{type-WUFT counterexample}
(L.~C. Brown)
Let $\L$ be the first-order language of signature $\left\{\leq\right\}$, and let $\M$ be the $\L$-structure given by the disjoint union $\bigsqcup_{n \in \omega\setminus\left\{0\right\}} n$ viewed as a partial order (i.e. with $\leq$ interpreted so that $x \leq y$ if and only if $x,y$ lie in the same $n \in \omega$ and $x \leq y$ in $n$). Let $T$ be the complete $\L$-theory of $\M$, let $D_n(x)$ be the negation of the $\L$-formula that says that $x$ is comparable to exactly $n$ elements, and let $I : \L_x \to \P(\L_x)$ be the schema of $\bigwedge$-definable ideals for $T$ given by
$$\psi(x,y) \mapsto I x.\psi(x,y) := \left\{ \forall x \left( \psi(x,y) \to \bigwedge_{n \in K} D_n(x) \right) : K \subset \omega \text{ finite} \right\}.$$

Consider the $\L$-formula $\theta = \theta(x,y) := x \leq y$. We claim that $R^I(x=x,\theta,2) \geq n$ for every $n \in \omega$, but no model of $T$ has infinite $I$-wide $\theta$-trees. Indeed, fix $n \in \omega$; let $l = |2^{<n}| + 1 = 2^n$. Consider the ordering $\prec$ on $2^{< n}$ defined as follows: let $\eta \neq \nu \in 2^{< n}$.
\begin{itemize}
\item If $\eta \bot \nu$, then $\eta \prec \nu$ if and only if $\eta[\left|\eta\wedge\nu\right|] = 1$;
\item Else, without loss of generality, $\eta \subsetneq \nu$; say $\eta \prec \nu$ if and only if $\nu[\left|\eta\wedge\nu\right|] = 0$.
\end{itemize}
Then $\prec$ defines a linear order on $2^{<n}$; let $b : 2^{<n} \to l, s \mapsto b_s$ be the unique order-preserving embedding which maps $2^{<n}$ onto an initial segment of $l$. For each $\sigma \in 2^n$, observe that $\theta_\sigma(x,\b{b}) := \bigwedge_{i < n} \theta(x,b_{\sigma \upharpoonright i})^{\sigma[i]}$ has nontrivial intersection with the finite part of $\frak{U}$: $\frak{U} \models \theta_{b^{-1}(k)}(k,\b{b})$ for $k < n - 1$, and $\frak{U} \models \theta_{0\cdots 0}(2^n - 1)$. So $\b{b}$ realizes a $I$-wide $\theta$-tree, by definition of $I$. It follows that $R^I(x=x,\theta,2) \geq n$.

Assume for a contradiction that some model $\frak{U}$ of $T$ has a $I$-wide $\theta$-tree $(b_s : s \in 2^{<\omega})$ of height $\omega$. Let $\sigma \in 2^\omega$ be the sequence which is constantly equal to $1$; by assumption, there is an $a \in \frak{U}$ such that $\frak{U} \models a \leq b_{\sigma(n)}$ for all $n \in \omega$ and $\tp(a/\b{b})$ is $I$-wide. The fact that $\tp(a/\b{b})$ is $I$-wide implies that $a$ lies in the finite part of $\frak{U}$, but the fact that $a \leq b_{\sigma(n)}$ for all $n \in \omega$ implies that $a$ is comparable to infinitely many elements, a contradiction.
\end{example}
\subsection{WIP and WSOP}

Recall that a formula has the order property if and only if it has the independence property or some Boolean combination of instances of that formula has the strict order property (\cite{ct}, Theorem II.4.7). In this section we remark that the same holds in an approximate setting.

Fix a schema $I$ of $\bigvee$-definable ideals for $T$. Let $\phi = \phi(x,y)$ be a partitioned $\L$-formula.

\begin{definition}
$\phi$ has the \emph{wide independence property} with respect to $I$ in $\M$ if there is a sequence $\b{b} \in \M_y^\omega$ such that for each $\sigma \in 2^\omega$,
\[ \left\{ \phi(x,b_i)^{\sigma[i]} : i \in \omega \right\} \]
is $I$-wide. $\phi$ has the \emph{wide independence property} in $T$ (with respect to $I$) if it has the wide independence property in some model of $T$.
\end{definition}

\begin{observation}
By compactness, $\phi$ has the wide independence property if, and only if, for every $N \in \omega$,
\[ \bigcup_{\sigma \in 2^N} \neg I x . \bigwedge_{i < N} \phi(x,y_i)^{\sigma[i]} \]
is a type. In particular, $\phi$ has the wide independence property if and only if $\phi$ has the wide independence property in every (equivalently, some) $\omega$-saturated model of $T$.
\end{observation}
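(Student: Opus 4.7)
The plan is a direct compactness argument that leverages the $\bigvee$-definability of $I$: the partial types $\neg Ix.\psi$ encode wideness of $\psi$ in the parameters, so WIP can be repackaged as the satisfiability of a partial type in the variables $(y_i : i \in \omega)$, and saturation then supplies a witness.

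For the forward direction, I would fix a witness $\b{b} = (b_i : i \in \omega)$ to WIP. Given $N \in \omega$ and $\sigma \in 2^N$, extend $\sigma$ arbitrarily to some $\tilde{\sigma} \in 2^\omega$. Since $\{\phi(x,b_i)^{\tilde{\sigma}[i]} : i \in \omega\}$ is $I$-wide, its finite sub-conjunction $\bigwedge_{i < N}\phi(x,b_i)^{\sigma[i]}$ is $I$-wide, and by the definition of $\neg Ix$ the tuple $(b_0,\dots,b_{N-1})$ realizes $\neg Ix.\bigwedge_{i<N}\phi(x,y_i)^{\sigma[i]}$. Varying $\sigma$ shows that $\bigcup_{\sigma \in 2^N}\neg Ix.\bigwedge_{i < N}\phi(x,y_i)^{\sigma[i]}$ is satisfiable, hence a type.

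For the converse, I would assemble
\[
\Pi(y_0,y_1,\dots) \;:=\; \bigcup_{N \in \omega}\bigcup_{\sigma \in 2^N}\neg Ix.\bigwedge_{i < N}\phi(x,y_i)^{\sigma[i]}.
\]
A finite subset of $\Pi$ mentions only finitely many of the $y_i$, so it sits inside the $N$-th-stage union for $N$ large enough; by hypothesis the latter is a type, so the finite subset is consistent with $T$. By compactness, $\Pi$ itself is a type. Realize $\Pi$ in any sufficiently saturated model of $T$ to obtain $\b{b} = (b_i : i \in \omega)$ such that every $\bigwedge_{i < N}\phi(x,b_i)^{\sigma[i]}$ with $\sigma \in 2^N$ is $I$-wide. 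For arbitrary $\tau \in 2^\omega$, wideness of $\{\phi(x,b_i)^{\tau[i]} : i \in \omega\}$ reduces to wideness of finite sub-conjunctions, and each such sub-conjunction is implied by some wide conjunction of the form $\bigwedge_{i < N}\phi(x,b_i)^{(\tau \restrict N)[i]}$; since ideals are downward closed, wideness is preserved upward along implications, so each finite sub-conjunction is wide and $\b{b}$ witnesses WIP.

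For the ``in particular'' clause, $\Pi$ is a type over $\emptyset$ in $\aleph_0 \leq |\L|$ many free variables; by the saturation fact from the preliminaries (with $\kappa = |\L|$ and $A = \emptyset$) it is realized in every $|\L|$-saturated model of $T$, which therefore witnesses WIP. The reverse direction---that WIP in any model of $T$ implies WIP in $T$---is immediate from the definition. No single step is particularly subtle; the main bookkeeping hazard is to keep the meaning of $\M \models \neg Ix.\psi(x,b)$ straight, since it encodes wideness of $\psi(x,b)$ via finite satisfiability of a partial type in $b$, and it is this translation on which the entire compactness argument rests.
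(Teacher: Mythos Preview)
The paper states this observation without proof, so there is nothing to compare against; your argument is correct and is the natural one.

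One small imprecision worth flagging: when you write that a finite subset $\Pi_0$ of $\Pi$ ``sits inside the $N$-th-stage union for $N$ large enough,'' this is not literally true as a set inclusion. The partial types $\neg Ix.\psi$ assigned by the schema for different formulas $\psi$ need not be nested, so a formula drawn from $\neg Ix.\bigwedge_{i<N_j}\phi(x,y_i)^{\sigma_j[i]}$ with $N_j < N$ is not in general a member of $\neg Ix.\bigwedge_{i<N}\phi(x,y_i)^{\sigma[i]}$ for any extension $\sigma$ of $\sigma_j$. What \emph{is} true---and what you correctly invoke in the next paragraph---is that any realization of the $N$-th-stage union also satisfies $\Pi_0$: wideness of $\bigwedge_{i<N}\phi(x,b_i)^{\sigma[i]}$ forces wideness of the shorter conjunction $\bigwedge_{i<N_j}\phi(x,b_i)^{\sigma_j[i]}$ by upward closure of wideness along implications, whence $(b_0,\dots,b_{N_j-1})$ realizes the whole partial type $\neg Ix.\bigwedge_{i<N_j}\phi(x,y_i)^{\sigma_j[i]}$. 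That suffices for the compactness step, so the argument stands once the phrasing is tightened.
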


\begin{definition}
$\phi$ has the \emph{wide strict order property} (with respect to $I$, in $T$) if there is a model $\M \models T$ and a sequence $\b{b} \in \M_y^\omega$ such that for all $i, j \in \omega$, if $i < j$, then $\neg \phi(x,b_i) \wedge \phi(x,b_j)$ is $I$-wide whereas $\phi(x,b_i) \wedge \neg \phi(x,b_j)$ is $I$-thin. 
\end{definition}

\begin{theorem}
    \label{WOP=WIPvWSOP}
If $\phi$ has the wide order property, then it has the wide independence property or some Boolean combination of instances of $\phi$ has the wide strict order property.
\end{theorem}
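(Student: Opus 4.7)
The plan is to adapt Shelah's classical ``OP $\Rightarrow$ IP $\vee$ SOP'' dichotomy, substituting $I$-wideness for consistency throughout. Assume $\phi$ has the wide order property. By observation~(iv) preceding the theorem, pass to a sufficiently saturated $\frak{U} \models T$ and an indiscernible sequence $\b{b} = (b_i : i \in \omega)$ in $\frak{U}$ such that each partial type $\pi_i(x) := \{\phi(x, b_j)^{\text{if } i < j} : j \in \omega\}$ is $I$-wide. Assume, in order to prove the disjunction, that $\phi$ does not have the wide independence property; I will construct a Boolean combination of instances of $\phi$ with WSOP.

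The failure of WIP, combined with the compactness observation preceding that definition, yields a minimal $N$ such that $\bigcup_{\sigma \in 2^N} \neg Ix.\bigwedge_{l < N} \phi(x, y_l)^{\sigma[l]}$ fails to be a type. Indiscernibility of $\b{b}$, applied to the $\L$-formulas in each $\neg I x.\chi$, implies that the set
\[
\Sigma := \left\{ \sigma \in 2^N : \bigwedge_{l < N} \phi(x, b_{i_l})^{\sigma[l]} \text{ is } I\text{-thin} \right\}
\]
does not depend on the choice of increasing tuple $i_0 < \cdots < i_{N-1}$ from $\b{b}$. By $\neg$WIP, $\Sigma$ is nonempty, and it contains only non-threshold patterns: each threshold pattern $\sigma_m := 0^m 1^{N-m}$ is realized $I$-widely by any $\pi_k$-witness for $k$ in the $m$-th gap of the tuple, and $\pi_k$ is $I$-wide. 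When $N = 2$, the unique non-threshold pattern $(1, 0)$ lies in $\Sigma$, so $\phi(x, b_i) \wedge \neg\phi(x, b_j)$ is $I$-thin for $i < j$; combined with the $I$-wideness of $\neg\phi(x, b_i) \wedge \phi(x, b_j)$ for $i < j$ from WOP, the formula $\phi$ itself witnesses WSOP via $\b{b}$, and we are done.

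For $N \geq 3$, select $\sigma^* \in \Sigma$ of minimal Hamming distance to the set of threshold patterns and a descent position $l^*$ of $\sigma^*$ (so $\sigma^*[l^*] = 1$ and $\sigma^*[l^* + 1] = 0$). Define the Boolean combination
\[
\psi(x; \b{y}, u, v) := \bigwedge_{l \notin \{l^*, l^* + 1\}} \phi(x, y_l)^{\sigma^*[l]} \wedge \phi(x, u) \wedge \neg\phi(x, v),
\]
fix $\b{y}$ by placing parameters $b_{j_l}$ at non-descent positions in an appropriate increasing pattern leaving a gap at $(l^*, l^* + 1)$, and---stretching $\b{b}$ via the standard lemma to admit fresh indices inside that gap---let $c_n := (\b{y}, b_{k_n}, b_{k_n'})$ range over an increasing family of pairs $(k_n < k_n')$ confined to the gap. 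The claim to verify is that $(c_n)$ witnesses WSOP for $\psi$: the Boolean expansion of $\psi(x, c_s) \wedge \neg\psi(x, c_t)$ gives a disjunction of $\phi$-atoms on the combined parameter tuple, each restricting to $\sigma^*$ on the fixed positions and inheriting $\sigma^*$'s descent shape on the varying slots, and is therefore $I$-thin by indiscernibility-transfer of $\sigma^* \in \Sigma$; while $\neg\psi(x, c_s) \wedge \psi(x, c_t)$ contains a $\pi_k$-wide atom for any $k \in [k_s', k_t)$, and is hence $I$-wide.

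The main obstacle is the pattern bookkeeping in the $N \geq 3$ step: verifying that every summand of the Boolean expansion of $\psi(x, c_s) \wedge \neg\psi(x, c_t)$ projects to an element of $\Sigma$ on the relevant $N$-subtuple, and using Hamming-minimality of $\sigma^*$ to rule out wide alternative atoms (any ``descent-flip'' of $\sigma^*$ is by minimality strictly closer to threshold and hence wide). Carrying this out cleanly will probably require an auxiliary induction on $N$, or on the descent structure of $\sigma^*$, ultimately reducing back to the base case $N = 2$ treated above.
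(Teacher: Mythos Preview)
Your $N\geq 3$ construction has a fatal flaw: each instance $\psi(x,c_n)$ is itself $I$-thin. By design, $\psi(x,c_n) = \theta \wedge \phi(x,b_{k_n}) \wedge \neg\phi(x,b_{k_n'})$ has pattern exactly $\sigma^*$ on the $N$-tuple consisting of the fixed positions together with $(k_n,k_n')$, and $\sigma^* \in \Sigma$ means this is thin. Consequently $\neg\psi(x,c_s)\wedge\psi(x,c_t)$, being contained in $\psi(x,c_t)$, is thin---not wide as you claim. Your assertion that it ``contains a $\pi_k$-wide atom'' is simply false: every disjunct of $\neg\psi(x,c_s)\wedge\psi(x,c_t)$ refines $\psi(x,c_t)$ and hence refines a $\sigma^*$-instance. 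The Hamming-minimality of $\sigma^*$ is never actually invoked in your wideness argument, which should have been a warning sign.

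The paper's argument avoids this by using a \emph{single} varying parameter. It runs a bubble-sort editing procedure on the thin pattern $\eta$, swapping an adjacent pair at each step until a threshold pattern is reached; since the endpoint is wide and the start is thin, some single swap crosses from thin to wide. Fixing all other coordinates as $\theta$ and setting $\psi(x,y):=\theta(x,\b{c})\wedge\phi(x,y)$, one gets for $i<j$ in the gap that $\psi(x,c_i)\wedge\neg\psi(x,c_j)$ and $\neg\psi(x,c_i)\wedge\psi(x,c_j)$ collapse to the pre- and post-swap $N$-patterns respectively---one thin, one wide---and a reversal of the index order gives WSOP. Your minimal-Hamming-distance idea can be salvaged the same way: if you drop the $\neg\phi(x,v)$ conjunct and let only $u$ vary, then for $i<j$ the two relevant conjunctions realize $\sigma^*$ (thin) and its descent-flip $\tau$ (wide, by minimality or because $\tau$ is threshold). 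The ``auxiliary induction on $N$'' you anticipate is then unnecessary.
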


The proof of Theorem \ref{WOP=WIPvWSOP} follows that of \cite[Theorem 2.67]{simon}; the only difference is that we must check that wideness and thinness of two particular formulas are preserved upon replacing the original indiscernible sequence which witnesses the wide order property to one that is stretched to order type $\bbQ$.

\begin{proof}
Suppose $\phi$ has the wide order property but not the wide independence property. Fix an $\omega$-saturated model $\frak{U}$. Then there exists an indiscernible sequence $\b{b} \in \frak{U}_y^\omega$ such that for each $i \in \omega$, $\left\{\phi(x,b_j)^{\text{if } i < j} : j \in \omega \right\}$ is $I$-wide. Since $\phi$ does not have the wide independence property, there exists $n \in \omega$ and $\eta \in 2^n$ such that $\bigwedge_{i < n} \phi(x,b_i)^{\eta[i]}$ is $I$-thin. 

Edit the formula $\bigwedge_{i < n} \phi(x,b_i)^{\eta[i]}$ as follows: while there exists some subformula of the form $\neg\phi(x,b_i) \wedge \phi(x,b_{i+1})$, change the rightmost such instance to $\phi(x,b_i)\wedge\neg\phi(x,b_{i+1})$; if no such subformulae are present, halt and return the current formula. This algorithm terminates and returns a formula of the form $\bigwedge_{i < N} \phi(x,b_i) \wedge \bigwedge_{N \leq i < n} \neg\phi(x,b_i)$.
By our choice of $\b{b}$, this formula is wide; so there is a step in this procedure where we pass from a thin formula to a wide one. I.e., there exist $i_0 < n$ and $\gamma \in 2^n$ such that $\theta(x,\b{b}) \wedge \neg \phi(x,b_{i_0}) \wedge \phi(x,b_{i_0+1})$
is thin, whereas $\theta(x,\b{b})\wedge \phi(x,b_{i_0}) \wedge \neg\phi(x,b_{i_0+1})$ is wide, where we define
\[ \theta(x,\b{b}) := \bigwedge_{i < i_0} \phi(x,b_i)^{\gamma[i]} \wedge \bigwedge_{i_0 + 1 < i < n} \phi(x,b_i)^{\gamma[i]}. \]
Let $q(\b{y},y_{i_0},y_{i_0+1}) \subset \neg Ix. \theta(x,\b{y}) \wedge \neg \phi(x,y_{i_0}) \wedge \phi(x,y_{i_0+1})$ witness thinness of the first formula. Stretch $\b{b}$ to an indiscernible sequence $\b{c} = (c_i : i \in \bbQ)$ via the standard lemma, and write $\b{c} := (c_i : i < i_0) \frown (c_i : i_0+1 < i < n)$. Let $J = \bbQ \cap (i_0, i_0+1)$ and $\psi(x,y) := \theta(x,\b{c}) \wedge \phi(x,y)$. Since $\frak{U} \models \neg\bigwedge q(\b{b},b_{i_0},b_{i_0+1})$ and $\b{b}$ is indiscernible, we have $\neg\bigwedge q(\b{y},y_{i_0},y_{i_0+1}) \in \EM_{l(y)\cdot n}(\b{b})$. But $\b{c}$ realizes the Ehrenfeucht--Mostowski type of $\b{b}$, and so 
\[ \frak{U} \models \neg\bigwedge q(c_{j_0},\dots,\hat{c_{j_{i_0}}},\hat{c_{j_{i_0+1}}},\dots,c_{j_{n-1}},c_{j_{i_0}},c_{j_{i_0+1}}) \]
holds whenever $j_0 < \cdots < j_{n-1} \in \bbQ$. In particular, if $i < j \in (i_0,i_0+1)_\bbQ$, we have $\frak{U}\models\neg\bigwedge q(\b{c},c_i,c_j)$, i.e. $\neg\psi(x,c_i) \wedge \psi(x,c_j) \in I$. Following similar reasoning, we see that for $i < j \in (i_0,i_0+1)_\bbQ$, 
\[ \frak{U} \models \neg Ix.\theta(x,\b{c}) \wedge \phi(x,c_i) \wedge \neg \phi(x,c_j) \]
holds, i.e. $\psi(x,c_i) \wedge \neg\psi(x,c_j) \notin I$. Let $K$ be the image of an embedding of $\omega^*$ ($\omega$ with the reverse ordering) in $(i_0,i_0+1)$. Then $(c_k : k \in K)$ witnesses the wide strict order property for $\psi(x,y)$. This completes the proof. 
\end{proof}

\section{Global $I$-stability}
\subsection{Products of $\bigvee$-definable ideals}

Fix a complete $\L$-theory $T$ and a model $\frak{U} \models T$. Let $I \subset L_x(\frak{U})$, $J \subset L_y(\frak{U})$ be ideals.

\begin{definition}
The product $I \otimes J$ consists of all definable subsets $D \subset \frak{U}_{x^\frown y}$ for which there exists a (definable) $F \in I$ such that $\left\{ a \in \frak{U}_x : D_a \notin J \right\} \subset F$.
\end{definition}

The following holds with no additional constraints on $I$ and $J$.

\begin{proposition}
$I \otimes J \subset L_{x \frown y}(\frak{U})$ is an ideal.
\end{proposition}

\begin{proof}
If $F_0$, $F_1 \in I$ witness $D_0$, $D_1 \in I \otimes J$ respectively, then $F_0 \cup F_1 \in I$ witnesses $D_0 \cup D_1 \in I \otimes J$. If $F$ witnesses $D \in I \otimes J$ and $E \subset D$, then $F$ also witnesses $E \in I \otimes J$. Finally $\emptyset \in I \otimes J$ is witnessed by $\emptyset \in I$.
\end{proof}

\begin{lemma} \label{product props}
(\cite{approxeq}, p. 57) Suppose $\frak{U}$ is $\kappa$-saturated and strongly homogeneous and $A \subset U$ is such that $|A|< \kappa$. Assume that $J$ is $\Aut(\frak{U})$-invariant.
\begin{enumerate}[i.]
\item If $\tp(a/A)$ is $I$-wide and $\tp(b/Aa)$ is $J$-wide then $\tp(ab/A)$ is $I\otimes J$-wide.
\item If $\tp(ab/A)$ is $I\otimes J$-wide, then $\tp(a/A)$ is $I$-wide.
\item Further assume that $J$ is $\emptyset$-$\bigvee$-definable. Then $\tp(ab/A)$ is $I\otimes J$-wide if, and only if, $\tp(a/A)$ is $I$-wide and $\tp(b/Aa)$ is $J$-wide.
\end{enumerate}
\end{lemma}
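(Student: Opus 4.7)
The plan is to prove (ii) by contraposition and (i) by contradiction, using strong homogeneity and $\kappa$-saturation to transfer a semantic containment into a syntactic witness.

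For (ii), I would start from the hypothesis $\tp(a/A)$ is $I$-thin and fix a witness $\phi(x) \in \tp(a/A)$ with $[\phi(x)]_{\sim_{\frak{U}}} \in I$. Viewing $\phi$ as an element of $\L_{x^\frown y}(A)$ by padding with a dummy $y$, I would observe that $\phi \in \tp(ab/A)$ and that $F := \phi(\frak{U}_x) \in I$ witnesses $[\phi]_{\sim_{\frak{U}}} \in I \otimes J$: the fiber of $\phi(\frak{U}_{x^\frown y})$ above any $a' \in \frak{U}_x$ is either $\emptyset$ or all of $\frak{U}_y$ according to whether $\phi(a')$ holds, so $\{a' : \phi(\frak{U})_{a'} \notin J\} \subset F$. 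This contradicts the hypothesis that $\tp(ab/A)$ is $I \otimes J$-wide.

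For (i), I would argue by contradiction. Assuming $\tp(ab/A)$ is $I \otimes J$-thin, fix $\psi(x,y) \in \tp(ab/A)$ with $[\psi]_{\sim_{\frak{U}}} \in I \otimes J$, and a witness $F \in I$ for this membership. The first step is to establish that $a$ itself lies in $S := \{a' \in \frak{U}_x : \psi(a',\frak{U}) \notin J\}$: since $\psi(a,y) \in \tp(b/Aa)$ and the latter type is $J$-wide by hypothesis, $[\psi(a,y)]_{\sim_{\frak{U}}} \notin J$, i.e., $a \in S \subset F$. Next, because $\psi$ is over $A$ and $J$ is $\Aut(\frak{U})$-invariant, any $\sigma \in \Aut(\frak{U}/A)$ sends $\psi(a',\frak{U})$ to $\psi(\sigma(a'),\frak{U})$ while preserving $J$-membership, so $S$ is $\Aut(\frak{U}/A)$-invariant. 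By strong $\kappa$-homogeneity, every realization of $\tp(a/A)$ in $\frak{U}$ belongs to $S$, and hence to $F$. Writing $F = \phi_F(\frak{U},c)$ for some finite tuple $c \in \frak{U}$, this means the partial type $\tp(a/A) \cup \{\neg \phi_F(x,c)\}$ has no realization; since $|A \cup c| < \kappa$, $\kappa$-saturation forces this type to be inconsistent, and compactness then yields $\psi'(x) \in \tp(a/A)$ with $\psi'(\frak{U}) \subset F$. Downward closure of $I$ gives $[\psi'(x)]_{\sim_{\frak{U}}} \in I$, contradicting $I$-wideness of $\tp(a/A)$.

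The main obstacle will be the final step of (i): upgrading a semantic inclusion of the realization set of $\tp(a/A)$ into the $\frak{U}$-definable set $F$ to a \emph{single} $A$-definable formula in $I$. The delicacy is that $F$ may involve parameters outside $A$, so neither the $\Aut(\frak{U}/A)$-invariance of $S$ alone nor the $\kappa$-saturation of $\frak{U}$ alone is enough; both hypotheses must be combined in the order indicated to apply compactness and recover an $A$-definable witness. The rest of the argument consists of routine unwinding of the definitions of $I \otimes J$ and of $I$- and $J$-wideness.
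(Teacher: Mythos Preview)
Your proposal is correct and follows essentially the same route as the paper: for (ii) you pad the $I$-thin witness to a formula in $x^\frown y$ and check directly that it lies in $I\otimes J$, and for (i) you use $J$-wideness of $\tp(b/Aa)$ to place $a$ in the set of wide fibers, then combine $\Aut(\frak{U})$-invariance of $J$ with strong homogeneity and saturation/compactness to extract an $A$-definable $I$-thin formula containing $a$. The only cosmetic difference is that the paper phrases (i) as a contrapositive (assume $\tp(ab/A)$ thin and $\tp(b/Aa)$ wide, conclude $\tp(a/A)$ thin) rather than as a contradiction, but the content is identical.
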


\begin{proof}
For (i), suppose $\tp(ab/A)$ is $I\otimes J$-thin; then there is an $A$-definable set $X \subset \frak{U}_{x^\frown y}$ and a definable $F \in I$ such that $ab \in X$ and $\left\{c \in \frak{U}_x  : X_c \notin J\right\} \subset F$. We want to show $\tp(a/A)$ is $I$-thin or $\tp(b/Aa)$ is $J$-thin; so suppose $\tp(b/Aa)$ is $J$-wide. Then $b \in X_a \notin J$, which implies $a \in F$ by definition. Since $J$ is invariant (over $\emptyset$) and $\frak{U}$ is sufficiently strongly homogeneous, it follows that $\tp(a/A) \vdash x \in F$, i.e. $\left\{ a' \in \frak{U}_x : \tp(a'/A) = \tp(a/A) \right\} \subset F$. Compactness and sufficient saturation thus deliver an $A$-definable set $G \ni a$ contained in $F$. In particular, $G \in I$ and $G \in \tp(a/A)$, so $\tp(a/A)$ is $I$-thin.

For (ii), suppose $\tp(a/A)$ is $I$-thin. Then $a \in X$ for some $I$-thin, $A$-definable set $X$. Putting $Y = \left\{ cd \in \frak{U}_{x^\frown y} : c \in X\right\}$, we obtain $\left\{c \in \frak{U}_x  : Y_x \notin J\right\} \subset X \in I$. Since $Y$ is $A$-definable and $ab \in Y$, it follows that $\tp(ab/A)$ is $I\otimes J$-thin.

For (iii), assume that $\tp(b/Aa)$ is $J$-thin. Let $\psi(x,y) \in \L(A)$ be such that $\frak{U}\models \psi(a,b)$ and $\psi(a,\frak{U}) \in J$. Because $J$ is $\bigvee$-definable, there is a formula $\theta(x) \in \L$ such that $\psi(c,\frak{U}) \in J$ if, and only if, $\frak{U}\models \theta(c)$. Consider the set $Y \subset \frak{U}_{x^\frown y}$ defined by $\psi(x,y)\wedge\theta(x)$. Then $Y$ is $A$-definable and $ab \in Y$. Moreover, we have $\left\{c \in \frak{U}_x  : Y_c \notin J\right\} = \emptyset$ because for any $c$, if $\frak{U}\models \theta(c)$, then $Y_c = \psi(c,\frak{U}) \in J$ by our choice of $\theta(x)$, and if $\frak{U}\models \neg\theta(c)$ then $Y_c = \emptyset \in J$. So $Y \in I\otimes J$, and we conclude that $\tp(ab/A)$ is $I\otimes J$-thin, as desired.
\end{proof}

\begin{lemma} \label{V product is V}
Suppose $\frak{U}$ is $\omega$-saturated. If $I$, $J$ are $\emptyset$-$\bigvee$-definable ideals, then so is $I \otimes J$.
\end{lemma}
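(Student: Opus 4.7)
The plan is to show that, uniformly in a formula $\chi(x,y,z) \in \L_{x^\frown y}$, the condition ``$\chi(x,y,c) \notin I \otimes J$'' is captured by a partial $\L_z(\emptyset)$-type. I will unravel the two containment conditions hidden inside the definition of $I \otimes J$, convert each into a finitary first-order condition via compactness, and then package the result as a partial type.

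Fix $\chi(x,y,z) \in \L_{x^\frown y}$. By the definition of the product, $\chi(x,y,c) \in I \otimes J$ iff there exist $\psi(x,u) \in \L$ and $e \in \frak{U}_u$ with $\psi(x,e) \in I$ and $\left\{ a \in \frak{U}_x : \chi(a,y,c) \notin J \right\} \subset \psi(\frak{U},e)$. Viewing $\chi$ as a partitioned formula in context $y$ with parameter variables $x^\frown z$, the partial type $\neg Jy.\chi(x,y,z)$ supplied by $\emptyset$-$\bigvee$-definability of $J$ has realization set in $\frak{U}$ equal to $\{(a,c) : \chi(a,y,c) \text{ is } J\text{-wide}\}$. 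Hence the displayed inclusion is equivalent to the inconsistency of $\neg Jy.\chi(x,y,c) \cup \{\neg\psi(x,e)\}$ with $\Th(\frak{U},c,e)$; by $\omega$-saturation of $\frak{U}$ together with compactness, this reduces to the existence of a finite subset $r(x,z) \subset \neg Jy.\chi(x,y,z)$ with
\[
\frak{U} \models \forall x\left( \bigwedge r(x,c) \to \psi(x,e) \right).
\]
Similarly, $\emptyset$-$\bigvee$-definability of $I$ makes ``$\psi(x,e) \in I$'' equivalent to the existence of a finite $q(u) \subset \neg Ix.\psi(x,u)$ with $\frak{U} \models \neg\bigwedge q(e)$.

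Assembling these two reductions, $\chi(x,y,c) \in I \otimes J$ iff there exist $\psi(x,u) \in \L$, a finite $q(u) \subset \neg Ix.\psi(x,u)$, and a finite $r(x,z) \subset \neg Jy.\chi(x,y,z)$ such that
\[
\frak{U} \models \exists u \left( \neg\bigwedge q(u) \wedge \forall x \left( \bigwedge r(x,c) \to \psi(x,u) \right) \right).
\]
Negating, $\chi(x,y,c) \notin I \otimes J$ iff for every such triple $(\psi,q,r)$,
\[
\frak{U} \models \forall u \left( \bigwedge q(u) \vee \exists x \left( \bigwedge r(x,c) \wedge \neg\psi(x,u) \right) \right).
\]
Read as $\L_z(\emptyset)$-formulas indexed by $(\psi,q,r)$, these sentences form the desired partial type over $\emptyset$ defining the $I \otimes J$-wide parameters, which will witness that $I \otimes J$ is $\emptyset$-$\bigvee$-definable.

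The main obstacle is the compactness reduction of $\{a : \chi(a,y,c) \notin J\} \subset \psi(\frak{U},e)$ to a single finite fragment of $\neg Jy.\chi$. This is exactly where $\omega$-saturation of $\frak{U}$ is essential: without it, the partial type $\neg Jy.\chi(x,y,c)$ might fail to define its intended semantic set in $\frak{U}$, so failure of the inclusion would not be visible inside $\frak{U}$ and could not be captured by a finite subtype. Once the reduction is secured, the remainder of the argument is a bookkeeping exercise in quantifier rearrangement.
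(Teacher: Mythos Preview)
Your proposal is correct and takes essentially the same approach as the paper: both arguments establish the biconditional that $\chi(x,y,c) \in I \otimes J$ holds iff there exist $\psi$, a finite fragment of $\neg Ix.\psi$, and a finite fragment of $\neg Jy.\chi$ satisfying the displayed existential sentence, using $\omega$-saturation plus compactness to reduce the inclusion $\{a : \chi(a,y,c) \notin J\} \subset \psi(\frak{U},e)$ to a single finite subtype of $\neg Jy.\chi$. You are slightly more explicit than the paper in writing out the negation as a partial $\L_z(\emptyset)$-type, whereas the paper states the biconditional and declares the reverse direction trivial, but the content is the same.
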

\begin{proof}
It suffices to prove the following claim: (the class of) $\psi(x,y,c) \in L_{x^\frown y}(\frak{U})$ is in $I \otimes J$ if and only if there exists a formula $\theta(x,w) \in \L$, a finite $p(w) \subset \neg Ix.\theta(x,w)$, and a finite $q(x,z) \subset \neg Jy.\psi(x,y,z)$ such that
\[ \frak{U} \models \exists w \left( \neg\bigwedge p(w) \wedge \forall x \left( \bigwedge q(x,c) \to \theta(x,w)\right) \right). \]
For the forward direction, suppose $\psi(x,y,c) \in I \otimes J$. Then by definition there exists $\theta(x,d) \in I$ such that for all $a \in \frak{U}_x $, if $\psi(a,y,c)\notin J$, then $\frak{U} \models\theta(a,d)$. That is, $\neg Jy.\psi(x,y,c) \vdash \theta(x,d)$ holds. By compactness and saturation, there exists a finite $q(x,z) \subset \neg Jy.\psi(x,y,z)$ such that $q(x,c) \vdash \theta(x,d)$. Moreover, as $\theta(x,d) \in I$, there is $p(w) \subset \neg Ix.\theta(x,w)$ such that $\frak{U} \models \neg \bigwedge p(d)$. Thus there exist $\theta(x,w)$, $p(w)$, and $q(x,z)$ such that
\[ \frak{U} \models \neg\bigwedge p(d) \wedge \forall x \left(\bigwedge q(x,c) \to \theta(x,d) \right), \]
as desired. The other direction is trivial.
\end{proof}

\begin{corollary} \label{associativity}
Let $\frak{U}$ be $\omega$-saturated and strongly homogeneous. If $I$, $J$, $K$ are $\emptyset$-$\bigvee$-definable ideals, then $I \otimes (J \otimes K) = (I \otimes J) \otimes K$.
\end{corollary}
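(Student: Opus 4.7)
The plan is to reduce the equality of the two iterated products to the Fubini-style characterization of wide types supplied by Lemma \ref{V product props}: both $I\otimes(J\otimes K)$ and $(I\otimes J)\otimes K$ will pick out exactly the same collection of $\otimes$-wide triple types $\tp(abc/A)$ over (automatically finite) parameter sets $A$, and equality of the two ideals then follows from a Boolean-prime-ideal argument.

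First I would record that all the ideals in sight remain $\emptyset$-$\bigvee$-definable. By Lemma \ref{V product is V}, $J\otimes K$ and $I\otimes J$ are $\emptyset$-$\bigvee$-definable, and iterating gives the same for $I\otimes(J\otimes K)$ and $(I\otimes J)\otimes K$. In particular each of these ideals is $\Aut(\frak{U})$-invariant, so the hypotheses of Lemma \ref{V product props} are met by the ``outer'' factor at each step of the forthcoming double application.

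The key step: fix a finite $A\subset U$ and tuples $a$, $b$, $c$ of the appropriate sorts. Applying Lemma \ref{V product props} first to the pair $(I, J\otimes K)$ (legitimate because $J\otimes K$ is $\bigvee$-definable) and then to $(J,K)$ yields
\[
\tp(abc/A) \text{ is } I\otimes(J\otimes K)\text{-wide} \Longleftrightarrow \tp(a/A)\text{ is $I$-wide, } \tp(b/Aa)\text{ is $J$-wide, } \tp(c/Aab)\text{ is $K$-wide.}
\]
Applying the same lemma first to $(I\otimes J, K)$ and then to $(I,J)$ produces the same right-hand side as the characterization of $(I\otimes J)\otimes K$-wideness for $\tp(abc/A)$. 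Thus the two ideals share exactly the same wide triple types over every finite $A$.

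To promote this coincidence of wide types to equality of ideals, suppose for contradiction that some $D\in L_{x^\frown y^\frown z}(\frak{U})$ lies in one of the ideals but not the other; by symmetry, say $D\in I\otimes(J\otimes K)$ and $D\notin (I\otimes J)\otimes K$. The defining parameters of $D$ form a finite $A\subset U$. Since $D\notin (I\otimes J)\otimes K$, the singleton $\{D\}$ has the $(I\otimes J)\otimes K$-FMP, so by Corollary \ref{I-BPI} it extends to an $(I\otimes J)\otimes K$-wide ultrafilter on $L_{x^\frown y^\frown z}(\frak{U})$; restricting to $A$-definable classes yields a complete wide type $p(xyz)\in S^{(I\otimes J)\otimes K}_{xyz}(A)$ with $D\in p$. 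By Notation \ref{wide type nota} and the fact that $|A|<\omega$, $p$ is realized by some $abc\in \frak{U}$, and by the key step $\tp(abc/A)=p$ is also $I\otimes(J\otimes K)$-wide; but then $D\in p$ contradicts $D\in I\otimes(J\otimes K)$. The main delicate point is ensuring the $\bigvee$-definability of the outer factor at each iteration of Lemma \ref{V product props}, which is precisely what Lemma \ref{V product is V} supplies.
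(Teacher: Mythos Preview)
Your proposal is correct and follows essentially the same route as the paper: both arguments observe via Lemma \ref{V product is V} that all the iterated products remain $\emptyset$-$\bigvee$-definable, then use Lemma \ref{V product props} twice to unfold each triple-wideness condition into the common characterization ``$\tp(a/A)$ is $I$-wide, $\tp(b/Aa)$ is $J$-wide, $\tp(c/Aab)$ is $K$-wide,'' and finally pass from equality of wide types over finite $A$ to equality of ideals by realizing a wide type in $\frak{U}$. The only cosmetic difference is that the paper argues directly (``$D$ wide for one $\Rightarrow$ $D$ wide for the other'') and simply says ``by saturation'' to produce the wide realization $abc$, whereas you phrase it as a contradiction and route the existence of $abc$ through Corollary \ref{I-BPI} and N.b.\ \ref{wide type nota}; these are the same step unpacked to different depths.
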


\begin{proof}
Let $D \in L_{x^\frown y^\frown z}(\frak{U})$, and let $A \subset U$ be a finite set over which $D$ is definable. Suppose $D$ is $I \otimes (J \otimes K)$-wide. Then, by saturation, there exists $abc \in D$ such that $\tp(abc/A)$ is $I \otimes (J \otimes K)$-wide. Note that by Lemma \ref{V product is V}, each of $I \otimes J$, $J \otimes K$, $I \otimes (J \otimes K)$, and $(I \otimes J) \otimes K$ are $\emptyset$-$\bigvee$-definable. Hence we may apply Lemma \ref{product props}.iii:
\begin{align*}
\tp(abc/A) \text{ is } I\otimes(J\otimes K)\text{-wide} & \Longleftrightarrow \tp(a/A) \text{ is } I\text{-wide and } bc/Aa \text{ is } J\otimes K\text{-wide} \\
& \Longleftrightarrow \tp(a/A) \text{ is } I\text{-wide, } tp(b/Aa) \text{ is } J\text{-wide, and } \tp(c/Aab) \text{ is } K\text{-wide} \\
& \Longleftrightarrow \tp(ab/A) \text{ is } I\otimes J\text{-wide and } \tp(c/Aab) \text{ is } K\text{-wide} \\
& \Longleftrightarrow \tp(abc/A) \text{ is } (I\otimes J) \otimes K\text{-wide}.
\end{align*}
Since $D$ is $A$-definable and $abc \in D$, the conclusion that $\tp(abc/A)$ is $(I \otimes J)\otimes K$-wide implies $D$ is $(I\otimes J)\otimes K$-wide, as required. The other direction is the same.
\end{proof}

\begin{definition}
Let $\frak{U}$ be $\omega$-saturated and strongly homogeneous. $I \subset L_1(\frak{U})$ be an $\emptyset$-$\bigvee$-definable ideal in one free variable. The ideals $I^{\otimes n} \subset L_n(\frak{U})$ are defined by induction on $n \in \omega \setminus \left\{0\right\}$: $I^1 = I$, $I^{\otimes(n+1)} = I^{\otimes n} \otimes I$.
\end{definition}

\begin{remark}
$\text{}$
\begin{enumerate}[i.]
\item By Lemma \ref{V product is V}, $I^{\otimes n}$ is an $\emptyset$-$\bigvee$-definable ideal of $\frak{U}$, and thus determines a schema $\mathbf{I}_n$ of $\bigvee$-definable ideals for $T$. Moreover, if $\frak{W} \models T$ is another $\omega$-saturated and strongly homogeneous model, then for all $n$, we have $\mathbf{I}_n(\frak{W}) = \left( \mathbf{I}_1(\frak{W}) \right)^{\otimes n}$. I.e., if $\mathbf{I}$ is a $\bigvee$-definable ideal schema for $T$ and $\frak{U}$, $\frak{W}\models T$ are $\omega$-saturated and strongly homogeneous, then $\mathbf{I}(\frak{U})^{\otimes n}$ and $\mathbf{I}(\frak{W})^{\otimes n}$ determine the same schema of $\bigvee$-definable ideals for $T$ in $n$ free variables, up to equivalence. Accordingly, by a slight abuse of notation, if $l(x) = n$, we let 
\[ \L_x \to \P(\L), \text{ } \psi(x,y) \mapsto \neg \mathbf{I} x.\psi(x,y) \]
denote any schema of $\bigvee$-definable ideal for $T$ which defines $I^{\otimes n}$.
\item Corollary \ref{associativity} implies $I^{\otimes n} = I^{\otimes l} \otimes I^{\otimes k}$ whenever $l$, $k$ are positive integers whose sum is $n$. Thus, if $\frak{U}$ is $|A|^+$-saturated and strongly homogeneous, then $\tp(ab/A)$ is $I^{\otimes n}$-wide if and only if $\tp(a/A)$ is $I^{\otimes l}$-wide and $\tp(b/Aa)$ is $I^{\otimes k}$-wide for every $a \in U^l$ and $b \in U^k$. 
\end{enumerate}
\end{remark}
\subsection{Counting wide complete types}

Let $T$ be a complete theory, and let $I$ be a schema of $\bigvee$-definable ideals for $T$ in one free variable. To ease notation, we let $S^I_x(A) := S^{I^{\otimes l(x)}}_x(A)$. Likewise, if $\phi \in \L_x$, we write $S^I_\phi(A) := S^{I^{\otimes l(x)}}_\phi(A)$.

\begin{theorem} \label{global wide stability}
The following are equivalent:
\begin{enumerate}[a.]
\item Every partitioned formula $\phi = \phi(x,y)$ is $I^{\otimes l(x)}$-stable.
\item Every partitioned formula $\phi = \phi(x,y)$ with $l(x) = 1$ is $I$-stable.
\item For every $\lambda = \lambda^{|T|}$, $\M \models T$, and $A \subset M$, if $|A| \leq \lambda$ then $\left|S^I_1(A,\M)\right| \leq \lambda$.
\item For every $\lambda = \lambda^{|T|}$, $\M \models T$, $A \subset M$, and $n \in \omega \setminus\left\{0\right\}$, if $|A| \leq \lambda$, then $\left|S^I_n(A,\M)\right|\leq\lambda$.
\end{enumerate}
\end{theorem}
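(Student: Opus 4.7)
The plan is to prove the cycle (a) $\Rightarrow$ (b) $\Rightarrow$ (c) $\Rightarrow$ (d) $\Rightarrow$ (a). The first implication is trivial, as (b) is just (a) specialized to $l(x) = 1$. The remaining three arrows all combine Theorem~\ref{V-WUFT} with the fiberwise description of $I^{\otimes n}$-wide types provided by Lemma~\ref{V product props}, using that $I^{\otimes n}$ is itself $\bigvee$-definable by Lemma~\ref{V product is V}.

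For (b) $\Rightarrow$ (c), fix $\lambda = \lambda^{|T|}$ (so in particular $\lambda \geq 2^{|T|} \geq 2^{|\L|}$), $\M \models T$, and $A \subset M$ with $|A|\leq\lambda$. By (b) and condition (a) of Theorem~\ref{V-WUFT}, $|S^I_\phi(A,\M)|\leq\lambda$ for every unary partitioned formula $\phi$. A complete wide $1$-type over $A$ is determined by its restrictions to the subalgebras $L_\phi(A,\M)$, so the induced map $S^I_1(A,\M) \hookrightarrow \prod_{\phi \in \L_1} S^I_\phi(A,\M)$ is injective, yielding $|S^I_1(A,\M)| \leq \lambda^{|\L|} \leq \lambda^{|T|} = \lambda$.

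For (c) $\Rightarrow$ (d), I induct on $n\geq 1$, the base case being (c). For the inductive step, fix $\lambda = \lambda^{|T|}$, $\M\models T$, and $A\subset M$ with $|A|\leq\lambda$. Since $S^I_{n+1}(A,-)$ is invariant under elementary equivalence over $A$, I may pass to a sufficiently saturated, strongly homogeneous $\frak{U}\succ\M$. Writing $x = x_0{}^\frown x'$ with $l(x_0)=1$ and $l(x')=n$, and fixing for each $q \in S^I_1(A,\frak{U})$ a realization $a_q \in \frak{U}$, Lemma~\ref{V product props} yields a bijection
\[ S^I_{n+1}(A,\frak{U}) \longleftrightarrow \bigsqcup_{q\in S^I_1(A,\frak{U})} S^I_n(Aa_q,\frak{U}), \]
sending $p$ with $p|_{x_0}=q$ to $\tp(\alpha'/Aa_q)$ for any $\alpha' \models p(a_q,-)$, with inverse sending $r$ to $\tp(a_q\alpha'/A)$ for any $\alpha'\models r$. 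The inductive hypothesis bounds each fiber by $\lambda$, and (c) bounds the index set by $\lambda$, so $|S^I_{n+1}(A,\frak{U})| \leq \lambda \cdot \lambda = \lambda$.

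For (d) $\Rightarrow$ (a), suppose some $\phi=\phi(x,y)$ with $l(x)=n$ is not $I^{\otimes n}$-stable. Since $I^{\otimes n}$ is $\bigvee$-definable, the failure of condition (b) in Theorem~\ref{V-WUFT} applied to $\phi$ and $I^{\otimes n}$ produces, for our chosen $\lambda = \lambda^{|T|}$, a model $\M$ and $A \subset M$ with $|A|\leq\lambda$ and $|S^I_\phi(A,\M)| > \lambda$. The wide-extension proposition of Section~2.3 makes the restriction map $S^I_n(A,\M) \twoheadrightarrow S^I_\phi(A,\M)$ surjective, forcing $|S^I_n(A,\M)| > \lambda$ and contradicting (d). I expect the main technical point to be the bijectivity of the fiber map in (c) $\Rightarrow$ (d): once $a_q$ realizing $q$ is fixed, Lemma~\ref{V product props} identifies $p \mapsto \tp(\alpha'/Aa_q)$ and $r \mapsto \tp(a_q\alpha'/A)$ as mutually inverse bijections on the $I^{\otimes(n+1)}$-wide types, and strong homogeneity of $\frak{U}$ is what allows one to work with this single fixed realization $a_q$.
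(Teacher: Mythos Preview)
Your proof is correct and follows essentially the same route as the paper: the cycle (a) $\Rightarrow$ (b) $\Rightarrow$ (c) $\Rightarrow$ (d) $\Rightarrow$ (a), with Theorem~\ref{V-WUFT} doing the work at the endpoints and the fiberwise description of $I^{\otimes(n+1)}$-wide types (via Lemma~\ref{V product props} and associativity) driving the inductive step. The only cosmetic difference is that in (c) $\Rightarrow$ (d) the paper projects onto the first $n$ coordinates (so the fiber is $S^I_1(Aa)$, packaged as Lemma~\ref{res lemma}), whereas you project onto the first coordinate (so the fiber is $S^I_n(Aa_q)$); both decompositions are available once $I^{\otimes(n+1)} = I^{\otimes n}\otimes I = I\otimes I^{\otimes n}$ is established.
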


\begin{definition}
$T$ is \emph{$I$-stable} if the equivalent conditions of Theorem \ref{global wide stability} hold. Otherwise, $T$ is \emph{$I$-unstable}.
\end{definition}

The proof of Theorem \ref{global wide stability} relies on the following

\begin{lemma} \label{res lemma}
Consider the restriction $\res : S_{n+1}(A) \to S_n(A)$, $\tp(ab/A) \mapsto \tp(a/A)$.
\begin{enumerate}[i.]
\item $\res$ maps $S^I_{n+1}(A)$ onto $S^I_n(A)$.
\item If $\tp(a/A)$ is $I^{\otimes n}$-wide, then we have a bijection
\begin{align*}
S_1^I(Aa) & \to \res^{-1}(\tp(a/A)) \cap S^I_{n+1}(A), \\
\tp(b/Aa) & \mapsto \tp(ab/A)
\end{align*}
\end{enumerate}
\end{lemma}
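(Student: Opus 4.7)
Throughout, fix a sufficiently saturated and strongly homogeneous $\frak{U} \models T$ with $|A|^+ \ll |\frak{U}|$, and recall that $I^{\otimes(n+1)} = I^{\otimes n} \otimes I$ as $\bigvee$-definable ideals of $L_{n+1}(\frak{U})$ (Corollary \ref{associativity} and the associated remark). The plan is to reduce both parts to the iff characterization in Lemma \ref{V product props}: $\tp(ab/A)$ is $I^{\otimes(n+1)}$-wide if and only if $\tp(a/A)$ is $I^{\otimes n}$-wide and $\tp(b/Aa)$ is $I$-wide.

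For (i), start with $p \in S^I_n(A)$. By Notation \ref{wide type nota} applied in $\frak{U}$, realize $p$ by some $a \in \frak{U}_n$ with $\tp(a/A)$ still $I^{\otimes n}$-wide. The partial type $\{x = x\}$ over $Aa$ is $I$-wide because $\top \notin I(\frak{U})$ (as $I(\frak{U})$ is a proper ideal by hypothesis), so invoking Notation \ref{wide type nota} once more delivers $b \in \frak{U}_1$ such that $\tp(b/Aa)$ is $I$-wide. Now Lemma \ref{V product props} applied with $J := I$ (which is $\bigvee$-definable by hypothesis) gives $\tp(ab/A) \in S^I_{n+1}(A)$, and by construction $\res(\tp(ab/A)) = \tp(a/A) = p$.

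For (ii), first check that the assignment is well-defined: if $q := \tp(b/Aa) \in S^I_1(Aa)$, then $q$ is $I$-wide and $\tp(a/A)$ is $I^{\otimes n}$-wide by hypothesis, so Lemma \ref{V product props} makes $\tp(ab/A)$ lie in $\res^{-1}(\tp(a/A)) \cap S^I_{n+1}(A)$. Injectivity is immediate from the definition of complete type over $Aa$: if $\tp(ab/A) = \tp(ab'/A)$ then the $(n+1)$-tuples $ab$ and $ab'$ have the same type over $A$, and since the first $n$ coordinates agree, we get $\tp(b/Aa) = \tp(b'/Aa)$ by applying any $A$-automorphism that conjugates one to the other (using strong homogeneity). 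For surjectivity, take $r \in \res^{-1}(\tp(a/A)) \cap S^I_{n+1}(A)$, realize it by some $a'b \in \frak{U}_{n+1}$, and use strong $|A|^+$-homogeneity to find $\sigma \in \Aut(\frak{U}/A)$ with $\sigma(a') = a$; then $\tp(a\,\sigma(b)/A) = r$, and applying the forward direction of Lemma \ref{V product props} to the $I^{\otimes(n+1)}$-wide type $r$ yields that $\tp(\sigma(b)/Aa)$ is $I$-wide, i.e. lies in $S^I_1(Aa)$ and maps to $r$.

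The argument is essentially a bookkeeping exercise once one has the iff from Lemma \ref{V product props}; the only subtlety is remembering to realize types by tuples whose complete types are themselves wide (via Notation \ref{wide type nota}) rather than merely satisfying the partial type in question, and to harness strong homogeneity of $\frak{U}$ for the surjectivity step. No step should present a genuine obstacle; if anything, the most error-prone point is keeping track of which ideal ($I^{\otimes n}$ vs.\ $I$ vs.\ $I^{\otimes(n+1)}$) governs each factor in the decomposition.
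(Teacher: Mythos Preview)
Your proposal is correct and follows essentially the same approach as the paper: both reduce everything to the characterization $\tp(ab/A)$ is $I^{\otimes(n+1)}$-wide iff $\tp(a/A)$ is $I^{\otimes n}$-wide and $\tp(b/Aa)$ is $I$-wide, then use strong homogeneity for the surjectivity in (ii). One small omission: in (i) you only argue surjectivity and never explicitly verify that $\res$ sends $S^I_{n+1}(A)$ \emph{into} $S^I_n(A)$; this is the other half of ``onto'' and follows immediately from the forward direction of the iff you cite (or from Lemma~\ref{product props}(ii)), so just add a sentence. Also, your injectivity argument in (ii) is more elaborate than needed---$\tp(ab/A)$ determines $\tp(b/Aa)$ directly without any appeal to automorphisms---but it is not wrong.
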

\begin{proof}
We work in a sufficiently (e.g. $\aleph_0+|A|^+$-) saturated and strongly homogeneous model $\frak{U}$. 

If $\tp(ab/A)$ is $I^{\otimes n+1}$-wide, then $\tp(a/A)$ is $I^{\otimes n}$-wide by Proposition \ref{product props}. On the other hand, if $\tp(a/A)$ is $I^{\otimes n}$-wide, take $b$ so that $\tp(b/Aa)$ is $I$-wide; then by Proposition \ref{product props} again, $\tp(ab/A)$ is $I^{\otimes n+1}$-wide. This proves the first item.

For the second, Proposition \ref{product props} guarantees the map in question is well-defined, and it is clearly injective. For surjectivity, suppose $\tp(a'b'/A) \in \res^{-1}(\tp(a/A))\cap S^I_{n+1}(A)$. Then, in particular, $a$ and $a'$ have the same type over $A$, so by strong homogeneity there is an automorphism $\sigma \in \Aut(\frak{U}/A)$ mapping $a'$ to $a$. Putting $b = \sigma(b')$ obtains $\tp(a'b'/A) = \tp(ab/A)$. Now, since $\tp(a'b'/A)$ is $I^{\otimes n+1}$-wide, Proposition \ref{product props} guarantees $\tp(b'/Aa')$ is $I$-wide. It follows that $\tp(b/Aa) = \sigma(\tp(b'/Aa'))$ is $I$-wide, as required.
\end{proof}

\begin{proof}[Proof of Theorem \ref{global wide stability}] (a)$\Rightarrow$(b) is trivial. For (b)$\Rightarrow$(c), fix $\lambda = \lambda^{|T|}$, let $l(x) = 1$, and consider the map
\[ S^I_1(A) \to \prod \left\{ S^I_\phi(A) : \phi \in \L_x\right\}, \hspace{0.25cm} p \mapsto (p \restrict \phi : \phi \in \L_x). \]
This map is well-defined because if $p \in S_1(A)$ is $I$-wide, then so are all of its restrictions to $\phi$-types; it is clearly injective. We assumed that every $\phi = \phi(x,y)$ with $l(x) = 1$ is $I$-stable. Noting that $\lambda \geq 2^{|T|}$, it follows by Theorem \ref{V-WUFT} that if $|A| \leq \lambda$, then $|S^I_\phi(A)|\leq \lambda$. Therefore we conclude $\left|S^I_1(A)\right| \leq \lambda^{|T|} = \lambda$.

For (c)$\Rightarrow$(d), the proof is by induction on $n \in \omega\setminus\{0\}$. The base case $n = 1$ is our assumption (c). Assume the result holds for all $0 < k \leq n$. Let $\lambda = \lambda^{|T|}$ and fix $A$ with $|A|\leq \lambda$. We want to show $\left| S^I_{n+1}(A) \right| \leq \lambda$. Lemma \ref{res lemma}.i implies that the restriction map $\res : \tp(ab/A) \mapsto a/A$ restricts to a surjection $S^I_{n+1} \to S^I_n(A)$ and so we have $S^I_{n+1}(A) = \bigcup\left\{ \res^{-1}(p) : p \in S^I_n(A) \right\}$. Applying the inductive hypothesis $k = n$ to $\lambda$ and $A$ gives $|S^I_n(A)|\leq \lambda$. It thus suffices to show $|\res^{-1}(p)|\leq \lambda$ for each $p \in S^I_n(A)$. Fix such a $p$ and let $a$ be a realization. By Lemma \ref{res lemma}.ii, it follows, $|\res^{-1}(p)| = \left|S^I_1(Aa)\right|$. The inductive hypothesis $k = 1$ applied to $\lambda$ and $Aa$ implies $\left|S^I_1(Aa)\right| \leq \lambda$, as required.

For (d)$\Rightarrow$(a), let $\phi = \phi(x,y)$ be a partitioned formula. In view of Theorem \ref{V-WUFT} and Lemma \ref{V product is V}, it suffices to show that there exists $\lambda \geq 2^{|T|}$ such that $\left|S^I_\phi(A)\right|\leq \lambda$ whenever $|A|\leq\lambda$. Fix any $\lambda = \lambda^{|T|}$. Fix $A$ with $|A|\leq \lambda$. For each $p \in S^I_\phi(A)$, choose a wide completion $\tilde{p} \in S^I_x(A)$. The assignment $p \mapsto \tilde{p}$ is an injection $S^I_\phi(A) \to S^I_x(A)$; the assumption (d) implies $\left|S^I_\phi(A)\right|\leq \lambda$, completing the proof.
\end{proof}

\section{Independence}
Following Shelah \cite{ct}, it would have been convenient to fix a ``monster model'' $\frak{U}$ of $T$ and an ideal $I(\frak{U}) \subset L_x(\frak{U})$ and define when $T$ is approximately stable directly in terms of $I(\frak{U})$. The issue is that $I(\frak{U})$ prima facie depends on the choice of $\frak{U}$, and in adopting this approach one must show that
\begin{enumerate}[i.]
\item under certain constraints on its descriptive complexity, $I(\frak{U})$ corresponds to a functor $I$ from $\Mod(T)$ to the category of Boolean algebras with a distinguished ideal, and
\item whether $T$ is approximately stable modulo $I$ does not depend on the choice of $\frak{U}$, as long as it enjoys adequate saturation (and strong homogeneity and universality).
\end{enumerate}
This is backwards: it seems cleaner to instead make rigorous the notion of a schema of ideals and then deduce (i) and (ii) as a posteriori features of the definition (cf. Theorems \ref{V schemas} and \ref{absoluteness}). In the author's estimation, it was worth foregoing convenience in exchange for better technical footing up to this point; but in what follows, this tradeoff no longer makes sense. In this connection we adopt a standard proviso, warranted by Theorem \ref{V schemas}:

\begin{proviso}
$\frak{U}$ denotes a $\kappa$-saturated and strongly homogeneous model of a fixed complete theory $T$, where $\kappa$ is ``sufficiently large.'' If $I$ is a schema of $\bigvee$-definable ideals for $T$, then we identify it with its $\frak{U}$-points. $A$, $B$, $C$, etc. denote \emph{small} subsets of $U$, i.e. those of cardinality less than $\kappa$. Unless explicitly stated otherwise, every set of formulas $\pi(x)$, $\tau(x)$ under consideration is defined over a small subset. We write $\models \pi(a)$ if $\frak{U} \models \bigwedge q(a)$ for all finite $q(x) \subset \pi(x)$, and $\models \pi(x)$ if $\models \pi(a)$ for some $a \in \frak{U}_x$; so by compactness and saturation, $\models \pi(x)$ holds if and only if $\pi(x)$ is a type. Likewise, we write $\pi(x) \vdash \tau(x)$ if $\pi(\frak{U}) \subset \tau(\frak{U})$. By compactness and saturation, this is equivalent to the condition that for every finite $\tau_0(x) \subset \tau(x)$, there exists a finite $\pi_0(x) \subset \pi(x)$ such that $\models \forall x \left(\bigwedge \pi_0(x) \to \bigwedge \tau_0(x)\right)$.
For two tuples $\b{a}$, $\b{a}'$ in the same order type, we write $\b{a} \equiv_A \b{a}'$ if $\tp(\b{a}/A) = \tp(\b{a}'/A)$---equivalently, by strong homogeneity, if $\sigma(\b{a}) = \b{a}'$ for some $\sigma \in \Aut(\frak{U}/A)$.
\end{proviso}
\subsection{$I$-forking}

Let $I$ be a schema of $\bigvee$-definable ideals for a complete $\L$-theory $T$ in one free variable.

\begin{definition}
A formula  $\psi(x,c) \in \L_x(U)$ \emph{$k$-$I$-divides over $A$} if there is a sequence $(c_i : i \in \omega)$ in $\tp(c/A)$ and a finite 
\[ q(y_0,\dots,y_{k-1}) \subset \neg Ix.\bigwedge_{i < k} \psi(x,y_i) \]
such that $\models \neg \bigwedge q(c_{l_0},\dots,c_{l_{k-1}})\footnote{Note that the ordering of the $c_{l_i}$'s is irrelevant.}$ holds for all $K = \left\{l_i : i < k\right\} \in { \omega \choose k}$. In this situation we say that $\psi(x,c)$ \emph{affords the witnesses} $q(y_0,\dots,y_{k-1})$, $(c_i : i \in \omega)$ \emph{for $k$-$I$-dividing over $A$}. 

$\psi(x,c)$ \emph{$I$-divides over $A$} if there is an $A$-indiscernible sequence $(c_i : i \in \omega)$ in $\tp(c/A)$ such that $\left\{\psi(x,c_i) : i \in \omega\right\}$ is $I$-thin. In this case we say $\psi(x,c)$ \emph{affords the witness} $(c_i : i \in \omega)$ \emph{for $I$-dividing over $A$}. A partial type $\pi(x)$ \emph{$I$-divides over $A$} if it implies a formula which $I$-divides over $A$.
\end{definition}

Note that it only makes sense to discuss non-$I$-dividing for wide formulas and types: if $\psi(x,c)$ is thin, then it $1$-$I$-divides over $\left\{c\right\}$.

The next few results establish some basic properties of $I$-dividing in line with standard facts for dividing, replacing ``consistent'' with ``wide." A number of the proofs similarly generalize in a straightforward manner; we omit proofs in such cases and refer the reader to Section 7.1 of \cite{tz} for more details. 

\begin{proposition} \label{I-dividing basics}
\begin{enumerate}[i.]
\item $\psi(x,c)$ $I$-divides over $A$ if and only if it $k$-$I$-divides over $A$ for some $k$.
\item If $\phi(x,b) \vdash \psi(x,c)$ and $\psi(x,c)$ $k$-$I$-divides over $A$, then $\phi(x,b)$ $k$-$I$-divides over $A$. In particular, the partial type $\pi(x)$ $I$-divides over $A$ if and only if $\bigwedge \pi_0(x)$ $I$-divides over $A$ for some finite $\pi_0(x) \subset \pi(x)$.
\item Let $A \subset B$. If $\phi(x,b)$ $I$-divides over $B$ then $\phi(x,b)$ $I$-divides over $A$. Conversely, if $\phi(x,b)$ $I$-divides over $A$, there is an $A$-conjugate $B' \equiv_A B$ such that $\phi(x,b)$ $I$-divides over $B'$. In fact, if $\phi(x,b)$ affords the witnesses $q(y_0,\dots,y_{k-1})$ and $\b{b}$ for $k$-$I$-dividing over $A$ such that $\b{b}$ is $A$-indiscernible, it affords the same witnesses for $k$-$I$-dividing over $B'$.
\item $\pi(x)$ $I$-divides over $A$ if and only if $\pi(x)$ $I$-divides over some model $M \supset A$ if and only if $\pi(x)$ $I$-divides over $\acl(A)$.
\item Let $p(x) \in S^I(\frak{U})$ be a wide global type. If $p(x)$ is $A$-invariant, then it does not $I$-divide over $A$. In particular, the conclusion holds if $p(x)$ is $A$-definable.
\end{enumerate}
\end{proposition}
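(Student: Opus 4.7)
The proofs parallel their classical counterparts, with two extra points of care: thinness must be witnessed $\bigvee$-definably, and $k$-$I$-dividing demands a \emph{uniform} finite witness $q$ failing on every $k$-subset. The key general observation is that $\neg Ix.\bigwedge_j\psi(x,y_j)$ has no free variables beyond the $y_j$, so $A$-indiscernibility of a sequence automatically propagates any such witness from one $k$-tuple to all of them. For the backward direction of (i), apply the standard lemma to the sequence afforded by $k$-$I$-dividing to extract an $A$-indiscernible sequence realizing its EM-type; the witness $\neg\bigwedge q$ persists, so $\{\psi(x,c_i')\}$ is $I$-thin. For the forward direction, $I$-thinness of an $A$-indiscernible $\{\psi(x,c_i)\}$ means some $\bigwedge_{j<k}\psi(x,c_{l_j})\in I$; by $\bigvee$-definability a finite $q\subset\neg Ix.\bigwedge_j\psi(x,y_j)$ fails there, and indiscernibility extends it uniformly to all $k$-tuples.

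\textbf{Part (ii).} This is the main technical obstacle, as one must produce a uniform $q'$ for $\phi$ from a uniform $q$ for $\psi$. Given witnesses $q,(c_i)$ for $\psi(x,c)$, use strong homogeneity to set $b_i:=\sigma_i(b)$ for $\sigma_i\in\Aut(\frak{U}/A)$ with $\sigma_i(c)=c_i$, so that $\models \chi(b_i,c_i)$ where $\chi(z,y):=\forall x(\phi(x,z)\to\psi(x,y))$. Extend $(b_i,c_i)$ to sufficient length and apply the standard lemma to extract an $A$-indiscernible subsequence $(b_i',c_i')_{i\in\omega}$; then $q$ still witnesses thinness of $\psi$ along $(c_i')$ (sub-tuples), and downward closure of ideals together with $\chi(b_i',c_i')$ forces $\bigwedge_{j<k}\phi(x,b_{l_j}')\in I$ for every $L$. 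By $A$-indiscernibility of $(b_i')$ alone, a single finite $q'\subset\neg Ix.\bigwedge_j\phi(x,z_j)$ witnessing thinness on one $k$-tuple works uniformly. The ``in particular'' clause follows by compactness: if $\pi(x)\vdash\psi(x,c)$ with $\psi(x,c)$ $I$-dividing, some finite $\pi_0\subset\pi$ entails $\psi(x,c)$, and the main claim then gives that $\bigwedge\pi_0$ $I$-divides.

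\textbf{Parts (iii)--(iv).} The forward direction of (iii) is immediate: $B$-indiscernibility refines $A$-indiscernibility and $\tp(b/B)\subset\tp(b/A)$. For the converse, take an $A$-indiscernible witness $(b_i)$ with $b_0=b$, extend to large length by compactness, and apply Erd\H{o}s--Rado--Shelah with base $B$ to extract a $B$-indiscernible subsequence $(b_i')$ preserving $\EM((b_i)/A)$; in particular $(b_i')$ still witnesses $I$-dividing over $A$. Choose $\sigma\in\Aut(\frak{U}/A)$ with $\sigma(b_0')=b$ and put $B':=\sigma(B)$; then $(\sigma(b_i'))$ is $B'$-indiscernible, lies in $\tp(b/B')$, and by $\Aut(\frak{U})$-invariance of $I$ still exhibits $I$-dividing, so $\phi(x,b)$ $I$-divides over $B'$. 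The ``in fact'' addendum records that when $\b{b}$ is already $A$-indiscernible and $B'$ is chosen with $(\b{b},B')\equiv_A(\b{b},B)$, the same $q,\b{b}$ serve as witnesses over $B'$. Item (iv) is a direct corollary: apply (iii) with $B$ any model $M\supset A$, or with $B=\acl(A)$, noting that every $A$-conjugate of $\acl(A)$ coincides with $\acl(A)$.

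\textbf{Part (v).} Suppose $p\in S^I(\frak{U})$ is $A$-invariant yet $I$-divides over $A$. Then $p\vdash \psi(x,c)$ for some $\psi(x,c)$ which $I$-divides over $A$; let $(c_i)$ be an $A$-indiscernible witness with $c_0=c$. Since each $c_i\equiv_A c$ and $p$ is $A$-invariant, $\psi(x,c_i)\in p$ for every $i$, so the $I$-thin family $\{\psi(x,c_i):i\in\omega\}$ lies inside $p$, contradicting wideness of $p$ (every finite sub-conjunction of formulas in a wide type is wide). The ``in particular'' is immediate because $A$-definable global types are $A$-invariant. Overall, the only genuine difficulty lies in part (ii); the remaining items reduce to strong homogeneity, the standard lemma, and the $\bigvee$-definability of $I$.
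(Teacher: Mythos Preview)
Your argument is essentially correct and closely parallels the paper's. Two remarks:

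For (ii), the paper takes a slightly different route: rather than extracting an $A$-indiscernible sequence of pairs $(b_i',c_i')$ via the standard lemma, it stretches the dividing sequence for $\psi$ to length $|T|^+$, defines $b_i := \sigma_i(b)$ as you do, and then applies Ramsey's theorem to the coloring that sends each increasing $k$-tuple $(b_{l_0},\dots,b_{l_{k-1}})$ to a choice of finite $q_s \subset \neg Ix.\bigwedge_j \phi(x,y_j)$ witnessing its thinness (there are at most $|T|$ colors). A monochromatic infinite subsequence then supplies the uniform $q'$ directly. Your indiscernibility-based approach is equally valid and arguably cleaner; one small wording issue is that the standard lemma yields a sequence realizing the EM-type rather than a literal subsequence, but all the properties you use ($\chi(b_i',c_i')$, $b_i'\equiv_A b$, failure of $q$ on the $c_i'$) lie in the EM-type, so this is harmless.

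For (iii), your main argument is correct, but your justification of the ``in fact'' addendum is slightly off. To ensure that the \emph{same} sequence $\b{b}$ (not merely $\sigma(\b{b}')$) serves as the witness over $B'$, one must choose $\sigma \in \Aut(\frak{U}/A)$ sending the entire $B$-indiscernible sequence $\b{b}'$ onto $\b{b}$ --- this is possible because both are $A$-indiscernible and realize the same EM-type over $A$, hence $\b{b}' \equiv_A \b{b}$ as infinite tuples. With $B' := \sigma(B)$ one then has $\b{b}$ itself $B'$-indiscernible, and the original $q$ still witnesses $k$-$I$-dividing. Your stated condition $(\b{b},B') \equiv_A (\b{b},B)$ would instead say $B' \equiv_{A\b{b}} B$, which is not what is wanted; what the paper arranges is $(\b{b}',B) \equiv_A (\b{b},B')$. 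Your choice of $\sigma$ with only $\sigma(b_0')=b$ suffices for the converse clause but not for the addendum as stated.
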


\begin{proof}
The proof of (i) is a straightforward generalization of the parallel fact for dividing (see \cite{tz}), once the correct definitions are in place: the requirement of a uniform witness $q(\overline{y})\subset \neg Ix.\bigwedge_{i<k}\psi(x,y_i)$ to $k$-$I$-dividing over $A$ along a given, but not necessarily $A$-indiscernible, sequence $\b{c}$ in $\tp(c/A)$ forces any realization $\b{c}'$ of the Ehrenfeucht--Mostowski type of $\b{c}$ to satisfy $\neg \bigwedge q(\overline{y})$ along any subsequence of length $k$, which is necessary and sufficient for $\psi(x,y)$ being $k$-$I$-thin along $\b{c}'$ if it is additionally $A$-indiscernible.

For (ii), suppose $\psi(x,c)$ affords $(c'_i : i \in \omega)$ and $q(y_0,\dots,y_{k-1}) \subset \neg Ix.\bigwedge_{i<k}\psi(x,y_i)$ for $k$-$I$-dividing over $A$. Let $\lambda = \beth_k^+(|T|)$. By compactness, there exists a sequence $(c_i : i < \lambda)$ in $\tp(c/A)$ which, along with the same $q(y_0,\dots,y_{k-1})$, is afforded by $\psi(x,c)$ for $k$-$I$-dividing over $A$. For each $i$, there is $\sigma_i \in \Aut(\frak{U}/A)$ such that $\sigma_i(c) = c_i$; put $b_i := \sigma_i(b)$. Then, as $\phi(x,b) \vdash \psi(x,c)$, we have $\phi(x,b_i) \vdash \psi(x,c_i)$ and so \[ \bigwedge_{i < k} \phi(x,b_{l_i}) \vdash \bigwedge_{i<k} \psi(x,c_{l_i}) \] holds whenever $l_0 < \dots < l_{k-1} < \lambda$ is a $k$-length subsequence of $\lambda$. As the succedent is thin, so is the antecedent. For each such subsequence $s$, let $q_s(y_0,\dots,y_{k-1}) \subset \neg Ix.\bigwedge_{i<k} \phi(x,y_i)$ be a finite subset that witnesses the corresponding conjunction of instances of $\phi(x,y)$ being thin. Per the Erd\H{o}s--Rado theorem, there is an infinite subsequence $(b_i : i < |T|^+)$ (embedded in $(c_i : i < \lambda)$ under, say, $f$) so that the set of $k$-length subsequences is monochromatic under the coloring $(b_{l_0},\dots,b_{l_{k-1}}) \mapsto q_{(f(l_0),\dots,f(l_{k-1}))}$, say with constant value $q_*(y_0,\dots,y_{k-1})$. Passing to this subsequence, we see that $\phi(x,b)$ affords $(b_i : i < |T|^+)$ and $q_*$ for $k$-$I$-dividing over $A$.

For (iii), see \cite{tz}. (iv) is immediate from (iii) and the fact that $\acl(A) = \bigcap\left\{ \M \prec \frak{U} : \frak{M} \supset A \right\}$. (v) is immediate from the definitions.
\end{proof}

\begin{lemma}
    \label{I-dividing_and_indiscernibles}
The following are equivalent:
\begin{enumerate}[a.]
\item $\tp(a/Ab)$ does not $I$-divide over $A$.
\item For every $A$-indiscernible sequence $\b{b} \ni b$ there exist $a' \equiv_{Ab} a$ and $\b{b}' \equiv_{Ab} \b{b}$ such that $\b{b}'$ is $Aa'$-indiscernible and $\tp(a'/A\b{b'})$ is $I$-wide.
\item For every $A$-indiscernible sequence $\b{b} \ni b$ there exists $\b{b}' \equiv_{Ab} \b{b}$ such that $\b{b}'$ is $Aa$-indiscernible and $\tp(a/A\b{b}')$ is $I$-wide.
\item For every $A$-indiscernible sequence $\b{b} \ni b$ there exists $a' \equiv_{Ab} a$such that $\b{b}$ is $Aa'$-indiscernible and $\tp(a'/A\b{b})$ is $I$-wide.
\end{enumerate}
\end{lemma}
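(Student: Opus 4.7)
The strategy is to prove $(a) \Leftrightarrow (d)$ directly, and then deduce $(b) \Leftrightarrow (c) \Leftrightarrow (d)$ by strong homogeneity. For the latter equivalences: since $I = I(\frak{U})$ is $\Aut(\frak{U})$-invariant, every element of $\Aut(\frak{U}/Ab)$ preserves both indiscernibility of sequences and wideness of types, so a witness $(a', \b{b}')$ for (b) transports to a witness for (c) via $\sigma \in \Aut(\frak{U}/Ab)$ with $\sigma(a') = a$, and to a witness for (d) via $\tau \in \Aut(\frak{U}/Ab)$ with $\tau(\b{b}') = \b{b}$; the reverse implications are immediate.

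For $(d) \Rightarrow (a)$, suppose some $\phi(x, b) \in \tp(a/Ab)$ $I$-divides over $A$, and let $(c_i : i \in \omega)$ be an $A$-indiscernible witnessing sequence in $\tp(b/A)$. Conjugating by an element of $\Aut(\frak{U}/A)$, I may assume $c_0 = b$; set $\b{b} := (c_i : i \in \omega)$. Apply (d) to obtain $a' \equiv_{Ab} a$ with $\b{b}$ being $Aa'$-indiscernible and $\tp(a'/A\b{b})$ $I$-wide. Since $\phi(x, b) \in \tp(a'/Ab)$, $Aa'$-indiscernibility of $\b{b}$ propagates this to $\phi(x, c_i) \in \tp(a'/A\b{b})$ for every $i$, and wideness of $\tp(a'/A\b{b})$ then forces $\{\phi(x, c_i) : i \in \omega\}$ to be $I$-wide, contradicting the choice of $(c_i)$.

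For the main direction $(a) \Rightarrow (d)$, fix $\b{b} = (b_i : i \in \omega)$ $A$-indiscernible with $b = b_k$, extend by compactness to an $A$-indiscernible $\b{B} = (b_i : i \in \lambda)$ with $\lambda$ large enough to apply Fact (Erd\H{o}s--Rado--Shelah) with a parameter set of cardinality at most $|A| + l(a)$, and put $p(x, y) := \{\phi(x, y) \in \L(A) : \phi(x, b) \in \tp(a/Ab)\}$. Non-$I$-dividing of $\tp(a/Ab)$ implies that $\pi(x, \b{B}) := \bigcup_{i \in \lambda} p(x, b_i)$ is $I$-wide, since any finite fragment is dominated by an iterated conjunction $\bigwedge_j \Phi(x, b_{i_j})$ of a single formula $\Phi(x, b) \in \tp(a/Ab)$, and this conjunction is $I$-wide by non-$I$-dividing of $\Phi(x, b)$ along $\b{B}$. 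Realize $\pi(x, \b{B})$ by $a^* \in \frak{U}_x$ with $\tp(a^*/A\b{B})$ $I$-wide; note $a^* \equiv_{Ab} a$. Apply Erd\H{o}s--Rado--Shelah to extract from $\b{B}$ an $Aa^*$-indiscernible $\b{b}^* = (b^*_i : i \in \omega)$ whose finite $Aa^*$-types are realized in $\b{B}$; then $\b{b}^* \equiv_A \b{b}$, and strong homogeneity supplies $\sigma \in \Aut(\frak{U}/A)$ with $\sigma(\b{b}^*) = \b{b}$. Set $a' := \sigma(a^*)$; applying $\sigma$ transfers $Aa^*$-indiscernibility of $\b{b}^*$ to $Aa'$-indiscernibility of $\b{b}$, and $I$-wideness of $\tp(a^*/A\b{b}^*)$ (which is inherited from that of $\tp(a^*/A\b{B})$ via the Ramsey extraction, invoking $\Aut(\frak{U})$-invariance of $I$) to $I$-wideness of $\tp(a'/A\b{b})$. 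The main obstacle is verifying $a' \equiv_{Ab} a$ rather than merely $a' \equiv_A a$: the Ramsey extraction supplies some $i(k) \in \lambda$ with $b^*_k \equiv_{Aa^*} b_{i(k)}$, so $\models p(a^*, b^*_k)$ follows from $\models p(a^*, b_{i(k)})$ because $p$ is an $\L(A)$-formula set; applying $\sigma$ and using $\sigma(b^*_k) = b_k = b$ yields $\models p(a', b)$, i.e. $a' \models \tp(a/Ab)$.
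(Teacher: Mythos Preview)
Your argument is correct. The equivalences among (b), (c), (d) and the direction (d)$\Rightarrow$(a) match the paper's treatment essentially verbatim. The genuine difference is in the hard direction: the paper proves (a)$\Rightarrow$(b) by isolating a ``wide standard lemma'' (Lemma~\ref{WSL}), a finite-Ramsey-plus-compactness argument which produces, from any sequence $\b{b}$ with $\tp(a/A\b{b})$ wide, an $Aa$-indiscernible sequence realizing $\EM(\b{b}/Aa)$ along which the type of $a$ remains wide. You instead prove (a)$\Rightarrow$(d) directly by stretching $\b{b}$ to length $\lambda$ and invoking the Erd\H{o}s--Rado--Shelah extraction fact already quoted in the preliminaries. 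Your route avoids the auxiliary lemma at the cost of the heavier partition calculus, and requires the extra bookkeeping step of tracking the index $k$ with $b=b_k$ through the extraction and the final automorphism $\sigma$ to recover $a'\equiv_{Ab}a$ (rather than merely $\equiv_A$); you handle this correctly. The wideness-preservation step---that $\tp(a^*/A\b{b}^*)$ inherits wideness from $\tp(a^*/A\b{B})$---is exactly the content of the $\Gamma_3$ clause in the paper's Lemma~\ref{WSL}, and works for the same reason in your setting: each formula in $\tp(a^*/A\b{b}^*)$ is an $\Aut(\frak{U}/Aa^*)$-conjugate of one in $\tp(a^*/A\b{B})$, and $I$ is invariant. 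The paper's approach has the mild advantage of not needing to lengthen the sequence and of packaging the Ramsey step as a reusable lemma; yours is closer to the standard textbook proof of the classical (non-$I$) version.
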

\begin{proof}
(c)$\Rightarrow$(b) and (d)$\Rightarrow$(b) are trivial. (b)$\Rightarrow$(c) and (b)$\Rightarrow$(d) are straightforward from the definitions: the only thing to note is that an $Ab'$-conjugate of $a'$ is wide over $Ab'$ if $a'$ is wide over $Ab'$. (c)$\Rightarrow$(a) is also immediate from the definitions: just note that a subset of a wide type is wide.

(a)$\Rightarrow$(b) makes use of a modified version of the standard lemma, stated and proved below. Write $p(x,y) = \tp(ab/A)$ and let $\b{b} = (b_i : i \in J)$ be an $A$-indiscernible sequence containing $b =: b_{i_0}$. Then, by assumption, $\bigcup_{i \in J} p(x,b_i)$ is $I$-wide; so there is a realization $a'$ such that $\tp(a'/A\b{b})$ is wide. Lemma \ref{WSL} delivers an $Aa'$-indiscernible sequence $\b{b}'' = (b_i'' : i \in J)$ which realizes $\EM(\b{b}/Aa')$ and is such that $\tp(a'/A\b{b}'')$ is wide. In particular, we have $\models p(a',b_{i_0}) \cup p(a',b_{i_0}'')$, so there is $\sigma \in \Aut(\frak{U}/Aa')$ sending $b_{i_0}''$ to $b_{i_0}$. Let $\b{b}' = \sigma(\b{b}'')$. Since $\b{b} \ni b$, $\models p(a',b)$, i.e. $a' \equiv_{Ab} a$, holds. Since $\b{b}''$ is $Aa'$-indiscernible, so is $\b{b}' = \sigma(\b{b}'')$. Since $\tp(a'/A\b{b}'')$ is wide, so is $\tp(a'/A\b{b}') = \sigma \cdot \tp(a'/A\b{b}'')$. Finally, since \[ \b{b}'' = \sigma^{-1}(\b{b}') \models \EM(\b{b}/Aa')\supset \EM(\b{b}/A), \] $b \in \b{b}$, $b \in \b{b}'$, and $\b{b}$, $\b{b}'$ are  $A$-indiscernible, we have $\b{b}'\equiv_{Ab} \b{b}$. This completes the proof.
\end{proof}
    
\begin{lemma} \label{WSL}
Let $J,K$ be infinite linear orders, $\b{b}\in \frak{U}_y^J$, and $a \in \frak{U}_x$. Suppose $\tp(a/A\b{b})$ is $I$-wide. Then there is an $Aa$-indiscernible sequence $\b{b}' \in \frak{U}_y^K$ which realizes $\EM(\b{b}/Aa)$ and is such that $\tp(a/A\b{b}')$ is $I$-wide.
\end{lemma}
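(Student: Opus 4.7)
The plan is to proceed in three stages: first elongate $\b{b}$ to a very long sequence that still realizes $\EM(\b{b}/Aa)$ and keeps $\tp(a/A\cdot)$ wide, then extract an $\omega$-indexed $Aa$-indiscernible subsequence via Erd\H{o}s--Rado--Shelah, and finally stretch to index $K$ using the standard lemma. The arguments in the second and third stages are essentially ``absorbed'' by the compactness step; the work is almost entirely in stage one.

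Stage 1 (compactness). Let $\lambda$ be large enough that Erd\H{o}s--Rado--Shelah applies over the parameter set $Aa$. With fresh variables $\b{z} = (z_i : i \in \lambda)$, consider the partial type $\Sigma(\b{z})$ over $Aa$ whose conjuncts are:
\begin{enumerate}[i.]
\item $\phi(z_{i_1}, \dots, z_{i_n})$ for each $\phi(v_1,\dots,v_n) \in \EM_n(\b{b}/Aa)$ and $i_1 < \dots < i_n \in \lambda$;
\item $\neg \phi(a, z_{i_1}, \dots, z_{i_n}) \vee \bigwedge q(z_{i_1},\dots,z_{i_n})$ for each $\phi(x, v_1,\dots,v_n) \in \L(A)$, each finite $q \subset \neg I x.\phi(x,v_1,\dots,v_n)$, and each $i_1 < \dots < i_n \in \lambda$.
\end{enumerate}
Given a finite $\Sigma_0 \subset \Sigma$ mentioning only $z_{i_1},\dots,z_{i_n}$, pick any $j_1 < \dots < j_n \in J$ and interpret $z_{i_k} \mapsto b_{j_k}$. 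The EM-conditions in (i) hold automatically. For a wideness-condition in (ii): if $\not\models \phi(a, b_{j_1},\dots,b_{j_n})$ the disjunction is satisfied trivially; otherwise $\phi(x, b_{j_1},\dots,b_{j_n}) \in \tp(a/A\b{b})$, and the $I$-wideness of $\tp(a/A\b{b})$ forces $\phi(x, b_{j_1},\dots,b_{j_n})$ to be wide, i.e.\ $\models \bigwedge q(b_{j_1},\dots,b_{j_n})$. Hence $\Sigma$ is consistent, and by saturation of $\frak{U}$ it is realized by some $\b{c} = (c_i : i \in \lambda) \in \frak{U}_y^\lambda$.

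Stage 2 (Erd\H{o}s--Rado--Shelah). Apply the Erd\H{o}s--Rado--Shelah fact to $\b{c}$ over $Aa$ to obtain an $Aa$-indiscernible sequence $\b{c}' = (c'_i : i \in \omega)$ such that each finite increasing tuple from $\b{c}'$ has the same $Aa$-type as some finite increasing tuple from $\b{c}$. As a consequence, $\b{c}'$ realizes $\EM(\b{c}/Aa) \supseteq \EM(\b{b}/Aa)$. For wideness of $\tp(a/A\b{c}')$: given $j_1 < \dots < j_n \in \omega$, strong homogeneity produces $\sigma \in \Aut(\frak{U}/Aa)$ mapping the matching $(c_{i_1},\dots,c_{i_n})$ to $(c'_{j_1},\dots,c'_{j_n})$; since $\tp(a/A c_{i_1},\dots,c_{i_n}) \subset \tp(a/A\b{c})$ is $I$-wide and $I$ is $\Aut(\frak{U})$-invariant, so is its image $\tp(a/A c'_{j_1},\dots,c'_{j_n})$. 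Stage 3 (the standard lemma). Apply the standard lemma to $\b{c}'$ over $Aa$ to obtain an $Aa$-indiscernible $\b{b}' = (b'_i : i \in K) \in \frak{U}_y^K$ realizing $\EM(\b{c}'/Aa) \supseteq \EM(\b{b}/Aa)$. Since $\b{c}'$ is itself $Aa$-indiscernible, any finite increasing tuple from $\b{b}'$ has the same $Aa$-type as $(c'_0,\dots,c'_{n-1})$; the same strong-homogeneity/invariance argument as in Stage 2 then shows $\tp(a/A\b{b}')$ is $I$-wide.

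The main obstacle is Stage 1: one has to recognize that the two \emph{a priori} very different conditions (EM-realization and wideness of $\tp(a/A\b{z})$) can simultaneously be expressed by a single partial type over $Aa$, and that every finite fragment is realized already by a finite sub-tuple of $\b{b}$. The wideness clauses are manageable only because $I$ is $\bigvee$-definable, so the predicate ``$\phi(x,b)$ is $I$-wide'' is itself a partial type in $b$ and thus can be placed inside a disjunction with an ordinary formula. Once Stage 1 is in hand, Stages 2 and 3 are essentially bookkeeping: pure invariance of $I$ under $\Aut(\frak{U})$ ports wideness along the automorphisms provided by strong homogeneity.
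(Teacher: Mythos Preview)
Your proof is correct, but the route differs from the paper's. The paper packages all three requirements---$Aa$-indiscernibility, realization of $\EM(\b{b}/Aa)$, and $I$-wideness of $\tp(a/A\cdot)$---into a single type $\Gamma = \Gamma_1 \cup \Gamma_2 \cup \Gamma_3$ in variables $(y_k : k \in K)$, and verifies finite satisfiability in one shot: given a finite $\Delta \subset \Gamma$ mentioning $l$ variables, it applies infinite Ramsey to the $l$-tuples of $\b{b}$ (colored by the truth values of the finitely many indiscernibility formulas in $\Delta_1$) to extract an infinite monochromatic subsequence $\b{b}_*$, any $l$-tuple of which then satisfies $\Delta$. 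So the paper handles indiscernibility \emph{inside} the compactness step, using only finite Ramsey.

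You instead defer indiscernibility entirely: your Stage~1 type $\Sigma$ omits the indiscernibility clauses, which makes finite satisfiability trivial (any increasing tuple from $\b{b}$ works, no coloring needed), but then you must invoke Erd\H{o}s--Rado--Shelah in Stage~2 to recover indiscernibility, followed by the standard lemma in Stage~3 to reach index set $K$. This is more modular and the wideness transfer via $\Aut(\frak{U}/Aa)$-invariance is transparent, but it trades the paper's single compactness-plus-Ramsey for a three-pass argument using the heavier Erd\H{o}s--Rado machinery. Both approaches exploit the same key observation you flag at the end: $\bigvee$-definability of $I$ is what lets the wideness condition sit inside a first-order type.
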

\begin{proof}
We want to realize $\Gamma = \Gamma_1 \cup \Gamma_2 \cup \Gamma_3$, where
\begin{align*}
\Gamma_1 & = \left\{ \phi(y_{k_1},\dots,y_{k_n}) \leftrightarrow \phi(y_{k_1'},\dots,y_{k_n'}) : n \in \omega, k_1^{(')} < \cdots < k_n^{(')} \in K, \phi(\b{y}) \in \L(Aa) \right\}, \\
\Gamma_2 & = \left\{ \phi(y_1,\dots,y_k) : n \in \omega, k_1 < \cdots < k_n \in K, \phi(\overline{y}) \in \EM(\b{b}/Aa) \right\}, \\
\Gamma_3 & = \left\{ \neg \theta(a,\b{y}) \vee \bigwedge \pi(\b{y}) : \theta(x,\b{y}) \in \L(A), \pi(\b{y}) \subset \neg Ix.\theta(x,\b{y}) \text{ finite} \right\}.
\end{align*}
Let $\Delta \subset \Gamma$ be finite and write
\begin{align*}
\Delta_1 & = \Delta \cap \Gamma_1 = \left\{\phi_{1,i}(\b{y}_{1,i}) \leftrightarrow \phi_{1,i}(\b{y}'_{1,i}) : i < n_1\right\}, \\
\Delta_2 & = \Delta \cap \Gamma_2 = \left\{ \phi_{2,i}(\b{y}_{2,i}) : i < n_2\right\}, \\
\Delta_3 & = \Delta \cap \Gamma_3 = \left\{ \neg\theta_{3,i}(a,\b{y}_{3,i}) \vee \bigwedge \pi_{3,i}(\b{y}_{3,i}) : i < n_3 \right\}.
\end{align*}
By padding the formulas with dummy variables we may assume that each of the formulas written above is in the same free variables $\b{y}$. Let $l = l(\b{y})$. Consider the $2^{n_1}$-coloring of the increasing $l$-length subsequences of $\b{b}$ given by $\b{b}_0 \mapsto \left\{ i < n : \models \phi_{1,i}(\b{b}_0) \right\}$.
By Ramsey's theorem, there is an infinite subsequence $\b{b}_*$ of $\b{b}$ so that the set of increasing $l$-length subsequences of $\b{b}_*$ is monochromatic. Thus $\b{b}_*$ satisfies $\Delta_1$. Since $\b{b}_*$ is a subsequence of $\b{b}$, it satisfies $\Delta_2$; moreover, since $\tp(a/A\b{b})$ is $I$-wide, it satisfies $\Delta_3$, as required.
\end{proof}

\begin{corollary}
(Transitivity) Let $A \subset B$. If $\tp(a/B)$ does not $I$-divide over $A$ and $\tp(c/Ba)$ does not $I$-divide over $Aa$ then $\tp(ac/B)$ does not $I$-divide over $A$.
\end{corollary}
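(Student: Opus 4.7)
The plan is to derive this transitivity by applying Lemma \ref{I-dividing_and_indiscernibles} twice in sequence, mirroring the classical argument for non-forking in simple theories. I will enumerate $B$ by a tuple $b$ so that $\tp(ac/B) = \tp(ac/Ab)$. By the equivalence (a)$\Leftrightarrow$(c) of Lemma \ref{I-dividing_and_indiscernibles} applied to $\tp(ac/Ab)$ (with the ideal $I^{\otimes l(ac)}$), it suffices to produce, for an arbitrary $A$-indiscernible sequence $\b{b} \ni b$, a sequence $\b{b}'' \equiv_{Ab} \b{b}$ that is $Aac$-indiscernible and such that $\tp(ac/A\b{b}'')$ is $I$-wide.

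First I will invoke that equivalence for the first hypothesis: since $\tp(a/B) = \tp(a/Ab)$ does not $I$-divide over $A$, part (c) of Lemma \ref{I-dividing_and_indiscernibles} yields $\b{b}' \equiv_{Ab} \b{b}$ that is $Aa$-indiscernible and satisfies that $\tp(a/A\b{b}')$ is $I$-wide. The conjugacy over $Ab$ ensures that the coordinate where $b$ occurs in $\b{b}$ is still $b$ in $\b{b}'$, so $\b{b}' \ni b$. Because $\b{b}'$ is $Aa$-indiscernible and contains $b$, I will then apply (a)$\Leftrightarrow$(c) to the second hypothesis---$\tp(c/Ba) = \tp(c/Aab)$ does not $I$-divide over $Aa$---with ambient parameter set $Aa$, obtaining $\b{b}'' \equiv_{Aab} \b{b}'$ that is $Aac$-indiscernible and satisfies that $\tp(c/Aa\b{b}'')$ is $I$-wide.

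The third step is to verify that $\b{b}''$ is the desired witness. Transitivity of conjugacy over $Ab$ gives $\b{b}'' \equiv_{Ab} \b{b}$, and $Aac$-indiscernibility is automatic from the construction. To see that $\tp(ac/A\b{b}'')$ is $I$-wide, I will invoke Lemma \ref{V product is V}, which guarantees that $I^{\otimes l(a)}$ and $I^{\otimes l(c)}$ are $\emptyset$-$\bigvee$-definable, and then apply Lemma \ref{V product props}, reducing to the two conditions that $\tp(a/A\b{b}'')$ is $I$-wide and $\tp(c/A\b{b}''a)$ is $I$-wide. The latter holds by construction. For the former, I will pick $\tau \in \Aut(\frak{U}/Aab)$ sending $\b{b}'$ to $\b{b}''$; since $\tau$ fixes $a$, the $\Aut(\frak{U})$-invariance of $I$ transports $I$-wideness of $\tp(a/A\b{b}')$ to that of $\tp(a/A\b{b}'')$. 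A final appeal to (c)$\Rightarrow$(a) of Lemma \ref{I-dividing_and_indiscernibles} then completes the argument.

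The main source of care---though not a serious obstacle---is the bookkeeping across tuples of different lengths: Lemma \ref{I-dividing_and_indiscernibles} is invoked with the variable tuple being $ac$ once (for the output) and $a$ or $c$ in the two input steps, and the product lemma must be applied with the matching powers of $I$, so that combining $I$-wideness of $\tp(a/A\b{b}'')$ with $I$-wideness of $\tp(c/A\b{b}''a)$ yields $I$-wideness of $\tp(ac/A\b{b}'')$ in the sense of $I^{\otimes l(ac)}$.
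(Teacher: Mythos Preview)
Your proposal is correct and follows essentially the same route as the paper: two successive applications of Lemma \ref{I-dividing_and_indiscernibles} to pass from $\b{b}$ to $\b{b}'$ to $\b{b}''$, then Lemma \ref{V product props} (together with invariance of $I$ under the automorphism sending $\b{b}'$ to $\b{b}''$) to assemble $I$-wideness of $\tp(ac/A\b{b}'')$. The only cosmetic difference is that the paper works with an arbitrary finite tuple $b \in B^{<\omega}$ and concludes by letting $b$ range over $B^{<\omega}$, whereas you enumerate all of $B$ at once; since $B$ is small and Lemma \ref{I-dividing_and_indiscernibles} applies to tuples of any small length, both are valid.
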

\begin{proof}
Let $b \in B^{<\omega}$ and $\b{b}$ an infinite $A$-indiscernible sequence containing $b$. Since $\tp(a/B)$ does not $I$-divide over $A$, Lemma \ref{I-dividing_and_indiscernibles} delivers an $Aa$-indiscernible $\b{b}' \equiv_{Ab} \b{b}$ such that $\tp(a/A\b{b}')$ is wide. Since $\tp(c/Ba)$ does not $I$-divide over $Aa$, it also delivers an $Aac$-indiscernible sequence $\b{b}'' \equiv_{Aab} \b{b}'$ such that $\tp(c/Aa\b{b}'')$ is wide. But if $\tp(a/A\b{b}')$ is wide and $\b{b}' \equiv_{Aab} \b{b}''$ then $\tp(a/A\b{b}'')$ is also wide. It follows, $ac/A\b{b}''$ is wide with respect to $I^{\otimes l(a)}\otimes I^{\otimes l(c)} = I^{\otimes l(a) + l(c)}$ (Corollary \ref{associativity}). Moreover, $\b{b}'' \equiv_{Ab} \b{b}$ holds. Another application of Proposition \ref{I-dividing_and_indiscernibles} shows that $\tp(ac/Ab)$ does not $I$-divide over $A$, completing the proof.
\end{proof}

\begin{definition}
A partial type $\pi(x)$ \emph{$I$-forks over $A$} if it implies a disjunction $\bigvee_{l < d} \phi_l(x,c_l)$ of formulas $\phi_l(x,c_l) \in \L_x(U)$, each $I$-dividing over $A$. $c$ is \emph{$I$-independent from $B$ over $A$}, denoted by
\[ c \underset{A}{\anch^I} B, \]
if $\tp(c/AB)$ does not $I$-fork over $A$.
\end{definition}

\begin{lemma}
Let $\M\models T$, $A \subset M$, and $p(x) \in S^I(\M)$.
\begin{enumerate}[i.]
\item If $\M$ is $|A|^+$-saturated, then $p(x)$ $I$-divides over $A$ if and only if $p(x)$ $I$-forks over $A$.
\item If $\M$ is $\kappa$-saturated and strongly $\kappa$-homogeneous for some $\kappa > |A|$ and $p(x)$ $I$-forks over $A$, then it has at least $\kappa$ many conjugates under the action of $\Aut(\M/A)$.
\item Let $\pi(x)$ be a partial type over $A$. If $\pi(x)$ does not $I$-fork over $A$, it extends to some complete $q(x) \in S(A)$ such that $q(x)$ does not $I$-fork over $A$. In particular, if $q(x)\in S(A)$ does not $I$-fork over $A \subset B$, it extends to some $r(x) \in S(B)$ which does not $I$-fork over $A$.
\end{enumerate}
\end{lemma}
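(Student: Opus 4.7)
The plan is to dispatch parts (i), (ii), and (iii) separately, exploiting saturation and (for (ii)) strong homogeneity of $\M$.

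For (i), one direction is immediate: $I$-dividing is the $d=1$ case of $I$-forking. For the converse, suppose $p \in S^I(\M)$ implies a disjunction $\bigvee_{l<d} \phi_l(x,c_l)$ with each $\phi_l(x,c_l)$ $I$-dividing over $A$. Compactness yields some $\psi(x,b) \in p$ with $b \in M$ already entailing this disjunction. Using $|A|^+$-saturation of $\M$ and the finiteness of $b$, I realize $\tp(c_0,\dots,c_{d-1}/Ab)$ by a tuple $(c_0',\dots,c_{d-1}')$ in $M$; the implication $\psi(x,b) \vdash \bigvee_l \phi_l(x,c_l')$ is preserved under $Ab$-conjugation, and each $\phi_l(x,c_l')$ still $I$-divides over $A$ by Proposition \ref{I-dividing basics}.iii. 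Completeness of $p$ over $M \ni c_l'$ forces some $\phi_l(x,c_l') \in p$ (otherwise $p$ would contain both $\bigvee_l\phi_l(x,c_l')$ and each $\neg\phi_l(x,c_l')$), hence $p$ $I$-divides over $A$.

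For (ii), invoke (i) to produce $\phi(x,c) \in p$ that $k$-$I$-divides over $A$ for some $k$, witnessed by an $A$-indiscernible sequence containing $c$. Using $\kappa$-saturation together with strong $\kappa$-homogeneity of $\M$, I extend this to an $A$-indiscernible sequence $(c_i : i<\kappa)$ inside $M$ with $c_0 = c$, keeping $\{\phi(x,c_i) : i<\kappa\}$ $I$-thin, and pick automorphisms $\sigma_i \in \Aut(\M/A)$ with $\sigma_i(c) = c_i$. If the orbit $\{\sigma_i(p) : i < \kappa\}$ contained fewer than $\kappa$ distinct types, pigeonhole would produce $S \subset \kappa$ of size $\kappa$ and an index $i_0 \in S$ with $\tau_i := \sigma_{i_0}^{-1}\sigma_i$ fixing $p$ setwise for all $i \in S$. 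Applying each such $\tau_i$ to $\phi(x,c) \in p$ places $\phi(x, \sigma_{i_0}^{-1}(c_i)) \in p$ for every $i \in S$; but the pullback sequence $(\sigma_{i_0}^{-1}(c_i) : i \in S)$ remains $A$-indiscernible (as $\sigma_{i_0}^{-1} \in \Aut(\M/A)$) and its associated instance family remains $I$-thin by $\Aut(\frak{U})$-invariance of $I$, yielding a finite subfamily of $p$ with $I$-thin conjunction. This contradicts $I$-wideness of $p$.

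For (iii), I apply Zorn's lemma to the poset of partial types extending $\pi$ (resp.\ $q$) that do not $I$-fork over $A$, ordered by inclusion. Unions of chains stay non-forking by compactness, since any finite subset of the union lies in one link of the chain and would otherwise witness forking there. A maximal element $\pi_*$ must be complete over $A$ (resp.\ $B$): if some $\psi$ were undecided, maximality would force both $\pi_* \cup \{\psi\}$ and $\pi_* \cup \{\neg\psi\}$ to $I$-fork over $A$, yielding $\pi_* \vdash \neg\psi \vee \bigvee_l \phi^1_l(x,c^1_l)$ and $\pi_* \vdash \psi \vee \bigvee_l \phi^2_l(x,c^2_l)$, and case analysis on $\psi$ would deliver $\pi_* \vdash \bigvee_l \phi^1_l \vee \bigvee_l \phi^2_l$, contradicting non-forking of $\pi_*$. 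The ``in particular'' clause is the special case where the ambient parameter set is enlarged from $A$ to $B$ while keeping the non-forking reference point fixed at $A$.

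The main obstacle is the pigeonhole step in (ii), where I must verify that both $A$-indiscernibility and $I$-thinness of the instance family descend along $\sigma_{i_0}^{-1}$; both follow cleanly from $\sigma_{i_0}^{-1} \in \Aut(\M/A)$ together with $\Aut(\frak{U})$-invariance of the $\bigvee$-definable ideal schema $I$, but the bookkeeping around $k$-$I$-dividing witnesses requires care.
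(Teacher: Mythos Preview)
Your proposal is correct and follows essentially the same approach as the paper in all three parts: for (i), realize the parameters of the dividing disjuncts inside $M$ via saturation and use completeness of $p$ to select one disjunct; for (ii), stretch the dividing witness to length $\kappa$ inside $M$, pigeonhole into a single conjugate, and contradict wideness; for (iii), apply Zorn to the poset of non-$I$-forking partial extensions and argue completeness of a maximal element by combining the two forking disjunctions. The only cosmetic difference in (ii) is that you pull back by $\sigma_{i_0}^{-1}$ to place the thin family inside $p$ itself, whereas the paper works directly with the conjugate containing infinitely many $\phi(x,m_i)$; these are equivalent formulations of the same pigeonhole-plus-invariance argument.
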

\begin{proof}
All of these are straightforward translations of the proofs of the parallel results for forking---see \cite{tz}. For instance, in (ii), assume $p(x)$ $I$-forks over $A$. By (i), $p(x)$ $I$-divides over $A$; suppose $\phi(x,m)\in p(x)$ affords $(m_i : i < \kappa)$ and $q(y_0,\dots,y_{k-1}) \subset \neg Ix.\bigwedge_{i<k}\phi(x,y_i)$ for $k$-$I$-dividing over $A$. $\kappa$-saturation of $\M$ lets us take $\left\{m_i : i < \kappa\right\} \subset \M_y$, and strong $\kappa$-homogeneity ensures that each $\phi(x,m_i)$ belongs to some $\Aut(\M/A)$-conjugate of $p(x)$. If $|\Aut(\M/A) \cdot p(x)| < \kappa$, then by the pigeonhole principle, some $\Aut(\M/A)$-conjugate $q(x)$ of $p(x)$ would contain infinitely many of the conjugates $\phi(x,m_i)$, $i < \kappa$. As this family has the property that all of its $k$-fold meets are thin, it follows that $q(x)$ is thin. But $p(x)$ is wide and $I$ is invariant, so $q(x)$ is wide too--contradiction.
\end{proof}
\subsection{$I$-simplicity}

Let $\Delta = \left\{\phi_l(x,y) : l < d\right\} \subset \L_x$ be finite.\footnote{Since $\Delta$ is finite, we may assume each $\phi_l(x,y_l)$ has the same parameter variables $y$ by padding.}

\begin{definition}
A \emph{uniformly $\Delta$-$k$-$I$-dividing sequence over $A$ in order type $K$} is a sequence of the form $\vec{\psi} := (\psi_i(x,a_i) : i \in K)$ where 
\begin{enumerate}
\item[i.] $K$ is a linear order and 
\item[ii.] for each $i$ there is a (unique) $l = l(i)$ which satisfies $\models \forall xy\left(\psi_i(x,y) \leftrightarrow \phi_{l(i)}(x,y)\right)$,
\end{enumerate}
such that the following conditions are met:
\begin{enumerate}
\item[iii.] $\left\{\psi_i(x,a_i) : i \in K\right\}$ is wide.
\item[iv.] For each $l < d$ there exists a finite $q_l(y_0,\dots,y_{k-1}) \subset \neg Ix.\bigwedge_{j < k} \phi_l(x,y_j)$ so that for every $i \in K$,  $q_{l(i)}$ witnesses $k$-$I$-dividing for $\psi_i(x,a_i)$ over $Aa_{<i}$.
\end{enumerate}
In this case, we say $\vec{\psi}$ \emph{affords} $Q = \left\{q_l(y_0,\dots,y_{k-1}) : l < d\right\}$ \emph{for uniform $\Delta$-$k$-$I$-dividing over $A$}. 
\end{definition}

\begin{observation}
\begin{enumerate}[i.]
\item If $\vec{\psi}$ affords $Q$ for uniform $\Delta$-$k$-$I$-dividing over $A$, then for any subsequence $\vec{\theta}$ and $B \subset A$, $\vec{\theta}$ affords $Q$ for uniform 
$\Delta$-$k$-$I$-dividing over $B$.
\item The following conditions are all type-definable over $A$ in the parameter variables $y_i$, $i \in K$:
\begin{enumerate}[a.]
\item $\left\{\phi(x,y_i) : i \in K\right\}$ is wide.
\item $\phi(x,y_i)$ affords $q$ and $\b{z} = (z_s : s \in \omega)$ for $k$-$I$-dividing over $A \cup \left\{y_j : j < i\right\}$ if, and only if, 
\begin{align*}
\models & \left\{\theta(y_i,y_{j_0},\dots,y_{j_{n-1}}) \leftrightarrow \theta(z_s,y_{j_0},\dots,y_{j_{n-1}}) : \theta \in \L(A), (j_0,\dots,j_{n-1}) \in \left\{j \in K : j < i\right\}^{<\omega}, s \in \omega\right\} \cup \\ & \left\{\neg\bigwedge q(z_{s_0},\dots,z_{s_{k-1}}) : (s_0,\dots,s_{k-1}) \in \omega^k\right\};
\end{align*} 
so this condition is type-definable in the $y_i$ and $z_s$. The projection of a type-definable set is still type-definable in a (sufficiently) saturated model. Therefore, the condition ``$q$ witnesses $\phi(x,y_i)$ uniformly $k$-$I$-dividing over $A$'' is type-definable in the $z_i$.
\end{enumerate}
It follows, there is a type $\Gamma(y_i : i \in K)$ with the property that $(\phi_i(x,b_i) : i \in K)$ affords $q$ for uniform $\phi$-$k$-$I$-dividing over $A$ if, and only if, $\models \Gamma(b_i : i \in K)$. By compactness, there is an infinite uniformly $\phi$-$k$-$I$-dividing sequence over $A$ if, and only if, there is a finite $q \subset \neg Ix.\bigwedge_{j<k}\phi(x,y_j)$ so that for every $N \in \omega$, there is a sequence over $A$ of length $N$ which affords $q$ for uniform $\phi$-$k$-$I$-dividing.
\end{enumerate}
\end{observation}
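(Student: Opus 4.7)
The plan is to verify the three claims of the observation in turn, each of which reduces to a routine compactness manipulation under the hypothesis that $I$ is $\bigvee$-definable and that the ambient monster $\frak{U}$ is sufficiently saturated. Part~(i) is unpacking definitions: widness of $\{\psi_i(x,a_i) : i \in K\}$ is itself defined through its finite subconjunctions, hence transfers to any subsequence $\vec{\theta}$; and if $B \subseteq A$ and $\vec{\theta}$ is indexed by a subsequence $K' \subseteq K$, then any witness sequence realizing $\tp(a_i/Aa_{<i})$ also realizes $\tp(a_i/Ba_{<i'})$, so the same $q_{l(i)}$ continues to afford $k$-$I$-dividing over the smaller base. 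Part~(ii)(a) follows immediately from $\bigvee$-definability: each $\neg Ix.\bigwedge_{i\in F}\phi(x,y_i)$ is itself a partial type, and the displayed union over finite $F \subset K$ type-defines widness of $\{\phi(x,b_i) : i \in K\}$ by the very definition of ``wide.''

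For part~(ii)(b), the two groups of conditions in the displayed collection correspond respectively to ``$\b{c}$ is a sequence in $\tp(b_i/Ab_{<i})$'' (captured by the $\theta$-biconditionals as $\theta$ ranges over $\L(A)$-formulas in the parameters from $b_{<i}$) and ``$q$ witnesses thinness of $\bigwedge_{j<k}\phi(x,c_{s_j})$ along $\b{c}$'' (captured by the $\neg\bigwedge q$ formulas). Together these encode precisely that $q$ and $\b{c}$ afford $k$-$I$-dividing for $\phi(x,b_i)$ over $Ab_{<i}$, and the encoding is manifestly type-definable over $A$ in the joint variables $(b_i,(c_s)_{s \in \omega})$. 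To project out $\b{c}$ while preserving type-definability in the $b_i$ alone, observe that any finite subtype of the joint type involves only finitely many $c_s$'s, so $\exists\b{c}\,\bigwedge(\text{finite subtype})$ becomes a single formula over $A$ in the $b_i$'s; by $\omega$-saturation of $\frak{U}$, the projection is type-defined by the collection of all such existentially-quantified finite conjunctions. Conjoining these projected conditions with the widness condition from (a) yields the promised type $\Gamma(z_i : i \in K)$ for a fixed $q$. The closing compactness corollary is then immediate: $\Gamma(z_i : i \in \omega)$ is consistent iff every $\Gamma(z_i : i \in N)$, $N \in \omega$, is consistent, iff arbitrarily long finite sequences witnessed by the same fixed $q$ exist. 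The converse direction is trivial since any infinite uniformly $\Delta$-$k$-$I$-dividing sequence already uses finitely many fixed $q_l$ ($l<d$) by definition of the notion.

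The only nontrivial step, to my mind, is the $\omega$-indexed existential projection inside part~(ii)(b): preserving type-definability under this projection requires both $\bigvee$-definability of $I$ (so that the joint condition is type-definable to begin with) and $\omega$-saturation of $\frak{U}$ (to pass freely between finitary realizability of projected conditions and realizability in $\frak{U}$). Everything else is bookkeeping of the definitions already set up in the preceding section.
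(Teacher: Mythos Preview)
Your proposal is correct and follows the same approach as the paper, which in fact does not give a separate proof: the observation is stated with the explicit formulas and the remark about projections of type-definable sets already embedded in the statement itself. Your write-up is simply a faithful unpacking of those embedded justifications, including the key point that existential projection over the countable tuple $\b{c}$ preserves type-definability by saturation.
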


\begin{definition}
$T$ is \emph{$I$-simple} if for every $x$, $\phi(x,y) \in \L_x$, $k \in \omega$, and $A$ there are no infinite uniformly $\phi$-$k$-$I$-dividing sequences over $A$.
\end{definition}

\begin{proposition}
    \label{simple prop}
The following are equivalent:
\begin{enumerate}[a.]
\item For every finite $\Delta = \left\{\phi_l(x,y) : l < d\right\}$, $k \in \omega$, finite $q_l(\overline{y}) \subset \neg Ix.\bigwedge_{j < k} \phi_l(x,y_j)$ for $l < d$, and $A$ there is a finite bound on the lengths of sequences which afford $\left\{q_l : l < d\right\}$ for uniform $\Delta$-$k$-$I$-dividing over $A$.
\item For every $\phi(x,y)$, $k \in \omega$, finite $q(\overline{y}) \subset \neg Ix.\bigwedge_{j<k}\phi(x,y_j)$, and $A$ there is a finite bound on the lengths of sequences which afford $q(\overline{y})$ for uniform $\phi$-$k$-$I$-dividing over $A$.
\item For every $\phi(x,y)$, $k \in \omega$, finite $q(\overline{y}) \subset \neg Ix.\bigwedge_{j<k}\phi(x,y_j)$, and $A$ there are no sequences of length $\omega$ which afford $q(\overline{y})$ for uniform $\phi$-$k$-$I$-dividing over $A$.
\item For every $\phi(x,y)$, $k \in \omega$, finite $q(\overline{y}) \subset \neg Ix.\bigwedge_{j<k}\phi(x,y_j)$, and $A$ there are no sequences of length $\kappa$ which afford $q(\overline{y})$ for uniform $\phi$-$k$-$I$-dividing over $A$, for every $\kappa \geq \aleph_0$.
For every $\phi(x,y)$, $k \in \omega$, finite $q(\overline{y}) \subset \neg Ix.\bigwedge_{j<k}\phi(x,y_j)$, and $A$ there are no sequences of length $\kappa$ which afford $q(\overline{y})$ for uniform $\phi$-$k$-$I$-dividing over $A$, for some $\kappa \geq \aleph_0$.
\end{enumerate}
\end{proposition}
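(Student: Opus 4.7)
The plan is to exploit the type-definability set up in the preceding observation, together with a pigeonhole argument on the labels of a uniform $\Delta$-$k$-$I$-dividing sequence. The easy implications are (a)$\Rightarrow$(b) (take $\Delta = \{\phi\}$ and $d = 1$), (b)$\Rightarrow$(c) (initial segments of any length-$\omega$ sequence have arbitrary finite length, so a finite bound as in (b) precludes a length-$\omega$ one), and (c)$\Rightarrow$(d) in either formulation, by taking the first $\omega$ terms of a putative length-$\kappa$ sequence: by part (i) of the preceding observation these form a length-$\omega$ uniform $\phi$-$k$-$I$-dividing subsequence, using that widness descends to subsets and that $k$-$I$-dividing persists over the restricted initial segments of parameters. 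The equivalence of the two clauses of (d) will follow from the compactness argument below.

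For (b)$\Rightarrow$(a) I argue by pigeonhole. Given $\Delta = \{\phi_l : l < d\}$, $k$, and finite $\{q_l : l < d\}$, let $N_l$ be the bound afforded by (b) applied to $\phi_l$, $k$, $q_l$, $A$, and put $N := d \cdot \max_{l < d} N_l$. If a uniform $\Delta$-$k$-$I$-dividing sequence $(\psi_i(x, a_i) : i \leq N)$ over $A$ affording $\{q_l\}$ existed, then by pigeonhole some label $l_0 < d$ would satisfy $|\{i \leq N : l(i) = l_0\}| > N_{l_0}$. Restricting to these indices yields a uniform $\phi_{l_0}$-$k$-$I$-dividing sequence over $A$ affording $q_{l_0}$ of length exceeding $N_{l_0}$: widness descends to the subset of instances, and part (i) of the preceding observation guarantees that the $k$-$I$-dividing conditions afforded by $q_{l_0}$ survive restriction to a subsequence over the correspondingly restricted initial segments of parameters. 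This contradicts the choice of $N_{l_0}$.

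The loop closes with (c)$\Rightarrow$(b), which is a direct consequence of part (ii) of the preceding observation. If (b) fails, fix $\phi$, $k$, $q$, $A$ admitting arbitrarily long finite sequences affording $q$ for uniform $\phi$-$k$-$I$-dividing over $A$. Since the condition of affording $q$ over $A$ is type-definable in the parameter variables (indexed by the desired order type), finite satisfiability combined with the saturation of $\frak{U}$ yields an $\omega$-length sequence affording $q$, contradicting (c); the same argument also produces sequences of any length $\kappa < \kappa_{\frak{U}}$, giving the remaining equivalence inside (d). The main obstacle, such as it is, is the bookkeeping to ensure that restricting a uniformly dividing sequence to a subsequence remains uniformly dividing over the restricted initial segments of parameters---which is exactly what part (i) of the preceding observation handles.
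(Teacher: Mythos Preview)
Your proof is correct and follows essentially the same route as the paper: a pigeonhole argument on the labels for (b)$\Rightarrow$(a), and the type-definability recorded in the preceding observation together with compactness for the step linking ``no finite bound'' to ``an infinite sequence exists.'' The only cosmetic difference is that the paper phrases (b)$\Rightarrow$(a) in contrapositive (finding a single ``persistent'' $\phi_{l_*}$ that recurs unboundedly often) whereas you give an explicit bound $N = d\cdot\max_l N_l$, and the paper spells out the type $\Gamma(z_i:i\in K)$ in stages for (e)$\Rightarrow$(b) rather than citing the observation directly.
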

\begin{proof}
(a)$\Rightarrow$(b), (b)$\Rightarrow$(c), (c)$\Rightarrow$(d), and (d)$\Rightarrow$(e) are trivial. For (e)$\Rightarrow$(b), assume that for some $\phi(x,y)$, $k \in \omega$, finite $q(\overline{y})\subset\neg Ix.\bigwedge_{j<k}\phi(x,y_j)$, and $A$ there are arbitrarily long finite sequences which afford $q(\overline{y})$ for uniform $\phi$-$k$-$I$-dividing over $A$. Fix an arbitrary linear order $K$; we want to show that the condition ``$(\phi(x,z_i) : i \in K)$ affords $q(\overline{y})$ for uniform $\phi$-$k$-$I$-dividing over $A$'' is type-definable by some $\Gamma(z_i : i \in K)$. For then, our assumption guarantees that $\Gamma(z_i : i \in K)$ is satisfiable for any linear order $K$, i.e. there are arbitrarily long infinite uniformly $\phi$-$k$-$I$-dividing sequences. To this end, we proceed in stages:

\begin{enumerate}[(i)]    
\item $\left\{\phi(x,z_i) : i \in k\right\}$ is $I$-wide if and only if $\models \Gamma_0(z_i : i \in K)$ where \[ \Gamma_0(z_i : i \in K) = \bigcup \left\{\neg Ix.\bigwedge_{i \in F} \phi(x,z_i) : F \subset K \text{ finite} \right\}. \]

\item $\phi(x,z_i)$ affords $q(\overline{y})$ and $(w_s : s \in \omega)$ for $k$-dividing over $A \cup \left\{z_j : j < i\right\}$ if and only if $\models \alpha_i((z_j : j \leq i)^\frown (w_s : s \in \omega))$ where
\begin{align*}
\alpha_i((z_j : j \leq i)^\frown (w_s : s \in \omega)) = & \left\{ \neg\bigwedge q(w_{\vec{s}}) : \vec{s}\in \omega^k\right\} \cup \\
& \left\{\psi(z_i,z_{\vec{j}})) \leftrightarrow \psi(w_s,z_{\vec{j}}) : \psi \in \L(A), \vec{j} \in \left\{j \in K : j < i\right\}^{<\omega}, s \in \omega\right\}.
\end{align*}

\item $\phi(x,z_i)$ affords $q(\overline{y})$ for $k$-$I$-dividing over $A\cup\left\{z_j : j < i\right\}$ if and only if $\models \beta_i(z_j : j \in K)$ where $\beta_i(z_j:j \in K) = \exists\overline{w}\left(\alpha_i((z_j : j \in K)^\frown \overline{w})\right)$ is the projection onto the $(z_j : j \in K)$, which we recall is type-definable over $A$.

\item Finally, $(\phi(x,z_i) : i \in K)$ affords $q(\overline{y})$ for uniform $\phi$-$k$-$I$-dividing over $A$ if and only if $\models \Gamma(z_i : i \in K)$, where $\Gamma = \bigcup\left\{\beta_i : i \in K\right\}\cup\Gamma_0$.
\end{enumerate}

It remains to prove (b)$\Rightarrow$(a). Assume there are a finite $\Delta = \left\{\phi_l(x,y) : l < d\right\}$, $k \in \omega$,  $Q = \left\{q_l(\overline{y}) : l < d\right\}$ with each $q_l(\overline{y})\subset\neg Ix.\bigwedge_{j<k}\phi(x,y_j)$ finite, and $A$ such that for every $N \in \omega$ there is a sequence of length $N$ which affords $Q$ for uniform $\Delta$-$k$-$I$-dividing over $A$. Say $\phi_l(x,y)$ is \emph{persistent up to $N$} if there is a sequence $(\psi_i(x,a_i) : i < M)$ which affords $Q$ for uniform $\Delta$-$k$-$I$-dividing over $A$ and is such that
\[ |\left\{ i < M : \models \forall x\left(\psi_i(x,y)\leftrightarrow\phi_l(x,y)\right)\right\}|\geq N. \]
Since $\Delta$ is finite and we assumed there are arbitrarily long finite uniformly $\Delta$-$k$-$I$-dividing sequences affording $Q$, there is some $l_* < d$ such that $\phi_{l_*}(x,y)$ is persistent up to $N$ for every $N \in \omega$.\footnote{Clearly persistence up to $N$ implies persistence up to $N'$ for all $N'\leq N$.}
Passing to the subsequences of instances of $\phi_{l_*}(x,y)$, we see that there are arbitrarily long finite sequences which afford $q_{l_*}$ for uniform $\phi_{l_*}$-$k$-$I$-dividing over $A$. This completes the proof.
\end{proof}

\begin{definition}
$T$ satisfies \emph{wide local character} (with respect to $I$) if for every $A \subset \frak{U}$, $p(x) \in S^I(A)$,\footnote{$x$ may be an arbitrary (finite) tuple of free variables.} $p(x)$ does not $I$-divide over some $A_0 \subset A$ with $|A_0|\leq|T|$.
\end{definition}

\begin{theorem}
(Wide local character) The following are equivalent:
\begin{enumerate}[a.]
\item $T$ is $I$-simple.
\item $T$ satisfies wide local character with respect to $I$.
\item For every $\omega$-saturated $\M \models T$ and $p(x) \in S^I(\M)$, $p(x)$ does not $I$-divide over some $A_0 \subset M$ with $|A_0|\leq|T|$.
\end{enumerate}
\end{theorem}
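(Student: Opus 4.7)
The plan is $(a)\Rightarrow(b)\Rightarrow(c)\Rightarrow(a)$. Since any $\omega$-saturated $\M$ is itself a valid choice of $A$ in the statement of (b), the implication $(b)\Rightarrow(c)$ is immediate.

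For $(a)\Rightarrow(b)$, I argue by contraposition. Given $p(x) \in S^I(A)$ which $I$-divides over every $A_0 \subset A$ of size $\leq |T|$, I build by transfinite recursion on $\alpha < |T|^+$ parameters $b_\alpha \in A$, formulas $\phi_\alpha(x,b_\alpha) \in p$, integers $k_\alpha$, and witnesses $q_\alpha$ so that $\phi_\alpha(x,b_\alpha)$ $k_\alpha$-$I$-divides over $\{b_\beta : \beta < \alpha\}$ with witness $q_\alpha$ (possible since $|\{b_\beta : \beta < \alpha\}| \leq |T|$). There are at most $|T|$ possible triples $(\phi,k,q)$, so pigeonhole along $|T|^+$ yields a cofinal $S$ on which the triple stabilizes at some $(\phi,k,q)$. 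Fix an $\omega$-long strictly increasing subsequence $\alpha_0 < \alpha_1 < \cdots$ from $S$ and put $c_n := b_{\alpha_n}$. Then $(c_n)_{n<\omega}$ is uniformly $\phi$-$k$-$I$-dividing over $\emptyset$ with witness $q$: wideness of $\{\phi(x,c_n)\}_n$ follows from $\{\phi(x,c_n)\} \subset p$, and each $\phi(x,c_n)$ $k$-$I$-divides over $\{c_0,\ldots,c_{n-1}\}$ via $q$, using that the witness for dividing over the larger set $\{b_\beta : \beta < \alpha_n\}$ restricts (Proposition \ref{I-dividing basics}.iii). This contradicts (a).

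For $(c)\Rightarrow(a)$, I again contrapose: failure of (a) yields $\phi$, $k$, $A$, $q$, and an infinite uniformly $\phi$-$k$-$I$-dividing sequence over $A$, which by Proposition \ref{I-dividing basics}.iii is also uniformly $\phi$-$k$-$I$-dividing over $\emptyset$. Compactness (via the type-definability observation preceding Proposition \ref{simple prop}) produces such sequences of any cardinal length, and Erd\H{o}s--Rado--Shelah extracts an $\emptyset$-indiscernible uniformly $\phi$-$k$-$I$-dividing sequence $(a_i : i < \kappa)$ with witness $q$ for any prescribed $\kappa$; wideness of finite meets $\bigwedge_{i \in F}\phi(x,a_i)$ is preserved because the condition $\neg Ix.\bigwedge_{i \in F}\phi(x,y_i)$ is $\emptyset$-type-definable and hence invariant under equality of $\emptyset$-types. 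Choose $\kappa$ large enough (e.g.\ $\kappa = \beth_\omega(|T|)^+$) that every length-$\kappa$ sequence admits an $M_0$-indiscernible extraction of length $\omega$ for every $M_0$ with $|M_0| \leq |T|$. Take $\omega$-saturated $\M \supset \{a_i : i < \kappa\}$ and extend $\{\phi(x,a_i) : i < \kappa\}$ to $p \in S^I(\M)$. For each $M_0 \subset M$ of size $\leq |T|$, extract an $M_0$-indiscernible $(b_j : j < \omega)$ from $(a_i)$ such that every finite tuple of $(b_j)$ is $M_0$-conjugate to one of $(a_i)$. Since $q$ is over $\emptyset$, $(b_j)$ witnesses $\phi(x,b_0)$ $k$-$I$-dividing over $M_0$; picking $i_1 < \kappa$ with $b_0 \equiv_{M_0} a_{i_1}$ and transporting by $\sigma \in \Aut(\frak{U}/M_0)$ sending $b_0$ to $a_{i_1}$ shows that $\phi(x,a_{i_1}) \in p$ also $k$-$I$-divides over $M_0$. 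Hence $p$ $I$-divides over every small $M_0 \subset M$, contradicting (c).

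The main obstacle is the coordination in $(c)\Rightarrow(a)$: a single long dividing sequence must simultaneously witness $I$-dividing over every small $M_0 \subset M$. This is handled by working with an $\emptyset$-indiscernible sequence so that the $\emptyset$-type-definable conditions (both wideness and the $q$-witness) transfer freely under $M_0$-conjugation, and by invoking strong homogeneity of $\frak{U}$ to move the extracted witness back onto a formula already in $p$.
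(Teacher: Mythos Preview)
Your arguments for $(a)\Rightarrow(b)$ and $(b)\Rightarrow(c)$ match the paper's.

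For $(c)\Rightarrow(a)$ there is a genuine gap. You claim that after extracting an $M_0$-indiscernible sequence $(b_j)$ from the uniformly $\phi$-$k$-$I$-dividing sequence $(a_i)$, the sequence $(b_j)$ itself witnesses $\phi(x,b_0)$ $k$-$I$-dividing over $M_0$. For that you would need $\models \neg\bigwedge q(b_{j_0},\ldots,b_{j_{k-1}})$ for every increasing $k$-tuple of indices. But this is \emph{not} part of the definition of a uniformly $\phi$-$k$-$I$-dividing sequence: that definition only says that each $\phi(x,a_i)$ admits \emph{some external} witness sequence $(c^i_s)_s$ in $\tp(a_i/a_{<i})$ whose $k$-fold meets fail $q$. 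The elements $a_i$ themselves (and hence the extracted $b_j$'s) need not form a $q$-bad family; on the contrary, condition (iii) in the definition forces $\{\phi(x,a_i):i\}$ to be $I$-wide, so in particular each $k$-fold meet $\bigwedge_{l<k}\phi(x,a_{i_l})$ is wide. The remark ``$q$ is over $\emptyset$'' does not help. Consequently you have not shown that $\phi(x,b_0)$ $k$-$I$-divides over $M_0$, and without this the transport back to $\phi(x,a_{i_1})\in p$ does not go through.

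The paper handles $(c)\Rightarrow(a)$ by a substantially different construction that is designed exactly to bridge this gap. Starting from a uniformly $\phi$-$k$-$I$-dividing sequence $(a_i:i<|T|^+)$ over $\emptyset$, it builds by transfinite induction a coherent array of $\omega$-saturated models $M^i_j$ ($j\le i<|T|^+$) with $M^i_j\prec M^i_l$ for $j<l$, $a_{<j}\subset M^i_j$, and---crucially---$\phi(x,a_j)$ affording $q$ for $k$-$I$-dividing over the \emph{model} $M^i_j$, not merely over $a_{<j}$. Each successor and limit step uses Proposition~\ref{I-dividing basics}.iii to conjugate the freshly built model so that the next dividing condition holds over it. Setting $\M^*=M^{|T|^+}_{|T|^+}$ and taking a wide completion $r\in S^I(\M^*)$ of $\{\phi(x,a_i):i<|T|^+\}$, regularity of $|T|^+$ ensures that any $A_0\subset M^*$ of size $\le|T|$ lies inside some $M^{|T|^+}_\alpha$, and then $\phi(x,a_\alpha)\in r$ $k$-$I$-divides over $M^{|T|^+}_\alpha\supseteq A_0$. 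This explicit nesting of models is what your Erd\H{o}s--Rado extraction is trying to shortcut, but it cannot be avoided without first arranging that the dividing witnesses are built into the sequence itself (e.g., via a tree construction), which you have not done.
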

\begin{proof}
For (a)$\Rightarrow$(b), suppose $p(x) \in S^I(A)$ $I$-divides over every $A_0 \subset A$ with $|A_0|\leq|T|$. A simple induction delivers a sequence $(\phi_i(x,a_i) : i < |T|^+)$ such that $\left\{\phi_i(x,a_i) : i \in \omega\right\} \subset p(x)$ and for each $i$, $\phi_i(x,a_i)$ uniformly $k_i$-$I$-divides over $\left\{a_j : j < i\right\}$, affording the witness $q_i(y_0,\dots,y_{k_i-1})$. Consider the coloring $\chi : \left\{\phi_i(x,a_i) : i < |T|^+\right\} \to \omega \times \L \times \L$ given by $\chi(\phi_i(x,a_i)) := (k_i,\phi_i(x,y_i),q_i(y_0,\dots,y_{k_i-1}))$. The color set has cardinality $|T|$, so the pigeonhole principle delivers a monochromatic subsequence $(\phi(x,b_i) : i < |T|^+)$ of length $|T|^+$, say with constant value $(k,\phi(x,y),q(y_0,\dots,y_{k-1}))$. By definition, this subsequence forms an infinite uniformly $\phi$-$k$-$I$-dividing sequence over $\emptyset$.

(b)$\Rightarrow$(c) is trivial, so it remains to prove (c)$\Rightarrow$(a).

Suppose $T$ is not $I$-simple, so that for some $\phi(x,y)$, $k \in \omega$, and finite $q(\overline{y})\subset \neg Ix.\bigwedge_{j < k}\phi(x,y_j)$ there is a sequence $(\phi(x,a_i) : i < |T|^+)$ which affords $q(\overline{y})$ for uniform $\phi$-$k$-$I$-dividing over $\emptyset$. By induction, we construct $\omega$-saturated models $M^i_j$, $j \leq i < |T|^+$ such that
\begin{enumerate}[i.]
\item for each $i$, $M^i_j \prec M^i_l$ whenever $j < l \leq i$,
\item $\left\{a_l : l < j\right\} \subset M^i_j$ whenever $j \leq i$, and
\item $\phi(x,a_j)$ affords $q(\overline{y})$ for $k$-$I$-dividing over $M^i_j$ whenever $j \leq i$.
\end{enumerate}
The induction starts by taking $M^0_0$ to be an $\Aut(\frak{U})$-conjugate of $M$ over which $\phi(x,a_0)$ affords $q(\overline{y})$ for $k$-$I$-dividing, per Proposition \ref{I-dividing basics}.iii. For the successor step $i \Rightarrow i + 1$, assume we are given an elementary chain $M^i_j, j \leq i$ of $\omega$-saturated models so that each $M^i_j \supset \left\{a_k : k < j\right\}$ and $\phi(x,a_j)$ affords $q(\overline{y})$ for $I$-dividing over $M^i_j$. Let $M^i_{i+1} \prec \frak{U}$ be an arbitrary $\omega$-saturated elementary extension of $M^i_i \cup \left\{a_j : j < i+1\right\}$. By Proposition \ref{I-dividing basics}.iii, since $\phi(x,a_{i+1})$ affords $q(\overline{y})$ for $k$-$I$-dividing over $\left\{a_j : j < i+1\right\}$, there is an automorphism $\sigma \in \Aut(\frak{U}/\left\{a_j : j < i+1\right\})$ such that $M^{i+1}_{i+1} := \sigma(M^i_{i+1})$ has the property that $\phi(x,a_{i+1})$ affords $q(\overline{y})$ for $k$-$I$-dividing over $M^{i+1}_{i+1}$,. Define, for $j \leq i$, $M^{i+1}_j := \sigma(M^i_j)$. Since $\phi(x,a_j)$ affords $q(\overline{y})$ for $k$-$I$-dividing over $M^i_j$, the same is true for $\phi(x,\sigma(a_j))$ over $\sigma(M^i_j) = M^{i+1}_j$. Clearly $M^{i+1}_j$, $j \leq i+1$ forms an elementary chain of $\omega$-saturated models, and since $\sigma$ fixes $\left\{a_j : j < i+1\right\}$ pointwise, each model $M^{i+1}_j$ contains $\left\{a_k : k < j\right\}$.

For the limit step, assume for induction we are given models $M^\alpha_\beta, \beta \leq \alpha < \delta$ with the desired properties. We want to exhibit an elementary chain $M^{\delta}_{\beta}, \beta \leq \delta$ so that $\left\{a_{<\beta}\right\} \subset M^\delta_\beta$ and $\phi(x,a_\beta)$ affords $q(\overline{y})$ for uniform $\phi$-$k$-$I$-dividing over $\emptyset$. Let $p^\alpha_\beta(z_\beta) = \tp(M^\alpha_\beta/a_{<\beta})$, so that $\alpha \subset \alpha'$ and $\beta \leq \beta'$ implies $p^\alpha_\beta(z_\beta) \subset p^{\alpha'}_{\beta'}(z_{\beta'})$. Let $p_\delta(z_\delta) = \bigcup_{\alpha < \delta} p^\delta_\alpha(z_\alpha)$. By compactness, this is a type; let $N^\delta_\delta$ be an arbitrary realization. The substructures of $N^\delta_\delta$ corresponding to the variable restriction from $z_\delta$ to $z_\alpha$ are denoted by $N^\delta_\alpha$. By inductive hypothesis and definition of the $N^\delta_\alpha$, we have $M^{\alpha+1}_\alpha \models p^{\alpha+1}_\alpha(z_\alpha)$, $N^\delta_\alpha \models p^\delta_\alpha(z_\alpha)$, and $p^{\alpha+1}_\alpha(z_\alpha) \subset p^\delta_\alpha(z_\alpha)$, so in particular we have $N^\delta_\alpha \equiv_{a_{<\alpha + 1}} M^{\alpha+1}_\alpha$ whence each $N^{\delta}_\alpha$ contains $a_{<\alpha}$. Moreover, since $N^{\delta}_\alpha \equiv_{a_{<\alpha + 1}} M^{\alpha+1}_\alpha$ (not just over $a_{<\alpha}$) and $\phi(x,a_\alpha)$ affords $q(\overline{y})$ for $k$-$I$-dividing over $M^{\alpha+1}_\alpha$, $\phi(x,a_\alpha)$ also affords $q(\overline{y})$ for $k$-$I$-dividing over $N^\delta_\alpha$. Similarly we see each $N^{\delta}_\alpha$, $\alpha < \delta$ is $\omega$-saturated; for $N^\delta_\delta$, there is no harm passing to an $\omega$-saturated elementary extension.

Now $\phi(x,a_\delta)$ affords $q(\overline{y})$ for $k$-$I$-dividing over $a_{<\delta} \subset N^\delta_\delta$, so there is an $\Aut(\frak{U}/a_{<\delta})$-conjugate $M^\delta_\delta := \sigma N^\delta_\delta$ over which $\phi(x,a_\delta)$ affords $q(\overline{y})$ for $k$-$I$-dividing. Let $M^\delta_\alpha :=\sigma N^\delta_\alpha$. Then each $M^\delta_\alpha$ is $\omega$-saturated, $\beta \leq \alpha \leq \delta$ implies $M^\delta_\beta \prec M^\delta_\alpha$, and $\beta \leq \delta$ implies $a_{<\beta} \subset M^\delta_\beta \equiv_{a_{<\delta}} N^\delta_\beta $. Moreover, for $\beta < \delta$, $\phi(x,\sigma a_\beta) = \phi(x,a_\beta)$ affords $q(\overline{y})$ for $\phi$-$k$-$I$-dividing over $\sigma N^\delta_\beta = M^\delta_\beta$, and we have already established that $\phi(x,a_\delta)$ affords $q(\overline{y})$ for $\phi$-$k$-$I$-dividing over $M^\delta_\beta$. Hence the induction continues.

Let $M^* := M^{|T|^+}_{|T|^+}$. Our sequence $(\phi(x,a_\alpha):\alpha < |T|^+)$ affords $q(\overline{y})$ for uniform $\phi$-$k$-$I$-dividing over $\emptyset$. In particular, $\left\{\phi(x,a_\alpha) : \alpha < |T|^+\right\}$ is wide; let $r(x) \in S^I(M^*)$ be any wide completion to $M^* \supset a_{<|T|^+}$. Fix $A \subset M$ with $|A| \leq |T|$. As $|T|^+$ is regular, there is some $\alpha < |T|^+$ such that $A \subset M^{|T|^+}_\alpha$. By construction, $\phi(x,a_\alpha) \in r(x)$ affords $q(\overline{y})$ for $k$-$I$-dividing over $M^{|T|^+}_\alpha$. Therefore $r(x)$ $I$-divides over $A$, and it thereby witnesses the failure of wide local character over $\omega$-saturated models, as desired.
\end{proof}

\begin{corollary} \label{existence}
Let $T$ be $I$-simple and $p(x) \in S^I(A)$. Then $p(x)$ does not $I$-fork over $A$.
\end{corollary}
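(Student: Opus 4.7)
The plan is a proof by contradiction: assume $p(x) \in S^I(A)$ $I$-forks over $A$ and extract an infinite uniformly $I$-dividing sequence over $A$, contradicting $I$-simplicity. By hypothesis, $p(x) \vdash \bigvee_{l < d}\phi_l(x,c_l)$ with each $\phi_l(x,c_l)$ $k$-$I$-dividing over $A$ (uniform $k$ obtained by padding) via a finite witness $q_l$. Two observations drive the argument: first, $p$ being $A$-invariant implies $p(x) \vdash \bigvee_l \phi_l(x,c'_l)$ for every $c' \equiv_A c$; second, a lattice-theoretic argument shows that whenever $\tau(x) \supset p(x)$ is $I$-wide and $c' \equiv_A c$, some $l < d$ makes $\tau(x) \cup \{\phi_l(x,c'_l)\}$ $I$-wide (otherwise, finite $\tau_l \subset \tau$ witnessing thinness at each $l$, together with $\tau_0 \subset \tau$ implying $\bigvee_l \phi_l(x,c'_l)$, would combine to yield a finite $\tau_* \subset \tau$ with $\bigwedge\tau_* \in I(\frak{U})$, contradicting widthness of $\tau$).

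I then plan to build inductively $(c^{(m)}, l_m)_{m<\omega}$ with $c^{(m)} \equiv_A c$ and $l_m < d$ maintaining (i) $\tau_m := p(x) \cup \{\phi_{l_j}(x, c^{(j)}_{l_j}) : j \leq m\}$ is $I$-wide and (ii) $\phi_{l_m}(x, c^{(m)}_{l_m})$ $k$-$I$-divides over $B_{m-1} := A \cup \{c^{(j)}_{l_j} : j < m\}$ with witness $q_{l_m}$. The base case $m=0$ takes $c^{(0)} := c$ and chooses $l_0$ by the widthness observation applied to $p$. The successor step uses Proposition \ref{I-dividing basics}.iii to produce a single $c^{(m+1)} \equiv_A c$ for which $\phi_l(x, c^{(m+1)}_l)$ $k$-$I$-divides over $B_m$ with witness $q_l$ for every $l < d$ \emph{simultaneously}, after which the widthness observation selects $l_{m+1}$.

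Pigeonhole over $\{0,\ldots,d-1\}$ yields $l_* < d$ with $J := \{m : l_m = l_*\}$ infinite. Enumerating $J = (m_n)_n$ increasingly, the subsequence $(\phi_{l_*}(x, c^{(m_n)}_{l_*}))_n$ is $I$-wide (as a subset of the wide $\bigcup_m \tau_m$) and each term $k$-$I$-divides over $A \cup \{c^{(m_i)}_{l_*} : i < n\}$ by the monotonicity half of Proposition \ref{I-dividing basics}.iii, yielding an infinite uniformly $\phi_{l_*}$-$k$-$I$-dividing sequence over $A$ affording $q_{l_*}$, which contradicts $I$-simplicity. The main obstacle is the simultaneity in the successor step: I plan to address it by invoking the Standard Lemma on $d$ separate $A$-indiscernible sequences each witnessing dividing of $\phi_l(x,c_l)$, amalgamating them via a Ramsey coloring into a single $A$-indiscernible sequence of tuples whose coordinate projections realize the appropriate Ehrenfeucht--Mostowski types; transferring simultaneous dividing from $A$ to the enlarged base $B_m$ is then handled by a further conjugation as in the second assertion of Proposition \ref{I-dividing basics}.iii.
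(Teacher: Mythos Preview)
Your overall strategy—inductively building an $I$-wide uniformly $\Delta$-$k$-$I$-dividing sequence and contradicting $I$-simplicity—matches the paper's. The difference is that you append each new instance at the \emph{end} of the sequence, which forces you to find a single $c^{(m+1)} \equiv_A c$ for which every $\phi_l(x,c^{(m+1)}_l)$, $l<d$, $k$-$I$-divides over the accumulated base $B_m$; this is where your Ramsey amalgamation enters, and where the argument breaks down.

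Concretely, suppose (after padding) all $\phi_l$ share the same parameter $c=b$, and let $\b{c}^l$ be an $A$-indiscernible witness to $\phi_l(x,b)$ $k$-$I$-dividing over $A$, with $c^l_0 = b$. Zipping into $g_i=(c^0_i,\ldots,c^{d-1}_i)$ and applying the standard lemma yields an $A$-indiscernible $(e_i)$ realizing $\EM((g_i)/A)$. But since the $\b{c}^l$ are unrelated sequences, the $2$-types $\tp(c^l_0,c^l_1/A)$ may differ across $l$; hence $\EM_1((g_i)/A)$ need not equal the complete type $\tp((b,\ldots,b)/A)$, so $e_0$ need not be $\equiv_A c$, and the crucial implication $p(x)\vdash\bigvee_l\phi_l(x,(e_0)_l)$ is lost. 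More fundamentally, it is not clear that \emph{any} single $c'\equiv_A c$ exists with all $d$ formulas simultaneously dividing over $B_m$: these ``parallel'' constraints on the $\phi_l$'s admit no obvious inductive reduction, since conjugating $c$ to fix dividing for one $l$ may destroy it for another.

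The paper sidesteps this by prepending the new instance at the \emph{front}: one seeks $b'\equiv_A b$ such that the \emph{existing} sequence $(\psi_i(x,a_i):i<n)$ becomes uniformly $\Delta$-$k$-$I$-dividing over $Ab'$. This simultaneity \emph{is} achievable by a straightforward induction on $n$ using Proposition~\ref{I-dividing basics}.iii at each step (equivalently: construct $(a'_i:i<n)\equiv_A(a_i:i<n)$ one coordinate at a time so that the conjugated sequence divides over $Ab$; at step $i$ one only enlarges the base of $\psi_i$ without disturbing $a'_{<i}$). The new instance $\phi_{l_0}(x,b')$ then divides over $A$ automatically since $b'\equiv_A b$, and your widthness observation selects $l_0$ exactly as you describe. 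Replacing ``append at end'' with ``prepend at front'' repairs the gap with no other change to your outline.
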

\begin{proof}
Suppose $p(x)$ $I$-forks over $A$, so that $p(x) \vdash \bigvee_{l<d} \phi_l(x,b)$ where each $\phi_l(x,b)$ $k$-$I$-divides over $A$, witnessed by some $q_l(y_0,\dots,y_k) \subset \neg Ix.\bigwedge_{i<k}\phi(x,y_i)$. Put $\Delta = \left\{\phi_l(x,y) : l < d\right\}$. We will show by induction on $n \in \omega$ there is a uniformly $\Delta$-$k$-$I$-dividing sequence over $A$ of length $n$ which is wide with $p(x)$ and affords the witnesses $\left\{q_l(y_0,\dots,y_k) : l < d\right\}$. By Proposition \ref{simple prop}, this implies $T$ is not $I$-simple.

Assume for induction that $(\psi_i(x,a_i) : i < n)$ affords the witnesses $\left\{q_l(y_0,\dots,y_k) : l < d\right\}$ for uniform $\Delta$-$k$-$I$-dividing over $A$, such that $p(x) \cup \left\{\psi_i(x,a_i) : i < n\right\}$ is wide. By Proposition \ref{I-dividing basics}.iii, there exists $b' \equiv_A b$ such that $(\psi_i(x,a_i) : i < n)$ affords the witnesses $\left\{q_l(y_0,\dots,y_k) : l < k\right\}$ for uniform $\Delta$-$k$-$I$-dividing over $Ab'$. Since $p(x) \cup \left\{\psi_i(x,a_i) : i < n\right\}$ is wide and $p(x) \vdash \bigvee_{l<d} \phi_l(x,b')$, there must be some $l_0 < d$ such that $\phi_{l_0}(x,b')$ is wide with $p(x) \cup \left\{\psi_i(x,a_i) : i < n\right\}$. Now $\phi_{l_0}(x,b)$ affords the witness $q_{l_0}(y_0,\dots,y_{k-1})$ for $k$-$I$-dividing over $A$, so the same conclusion holds for $\phi_{l_0}(x,b')$ because $b' \equiv_A b$. Putting all of this together, we conclude that $(\phi_{l_0}(x,b'),\psi_0(x,a_0),\dots,\psi_{n-1}(x,a_{n-1})$ is wide with $p(x)$ and affords the witnesses $\left\{q_l(y_0,\dots,y_{k-1}) : l < d\right\}$ for uniform $\Delta$-$k$-$I$-dividing over $A$. This completes the proof.
\end{proof}

\begin{corollary}
If $T$ is $I$-stable, then $T$ is $I$-simple.
\end{corollary}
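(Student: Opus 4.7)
The plan is to deduce $I$-simplicity from $I$-stability by verifying condition (c) of the wide local character theorem: for every $\omega$-saturated $\M\models T$ and every $p(x)\in S^I(\M)$, $p(x)$ does not $I$-divide over some $A_0\subseteq M$ with $|A_0|\leq |T|$. The key input is Theorem \ref{V-WUFT}(e): by $I$-stability, each wide $\phi$-type $p\upharpoonright\phi$ is $M$-definable, and inspecting the proof of (d)$\Rightarrow$(e) reveals that the defining formula $d_{p,\phi}(y)$ may be taken over a \emph{finite} $M_{\phi,0}\subseteq M$, namely the parameters of a finite subtype $p_0\subseteq p\upharpoonright\phi$ witnessing the local $I$-rank.

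Fix such $\M$ and $p$, and set $A_0:=\bigcup_{\phi\in\L_x} M_{\phi,0}$, so that $|A_0|\leq|T|$. I would define the canonical extension $\tilde p\subseteq\L_x(\frak{U})$ to the monster by declaring $\phi(x,a)^\epsilon\in\tilde p$ iff $\frak{U}\models d_{p,\phi}(a)^\epsilon$. Invariance of $\tilde p$ under $\Aut(\frak{U}/A_0)$ is immediate, since every defining formula has parameters in $A_0$. Once $\tilde p\in S^I(\frak{U})$ is verified, Proposition \ref{I-dividing basics}(v) yields that $\tilde p$ does not $I$-divide over $A_0$; because $p\subseteq\tilde p$ as partial types and $I$-dividing passes from subtype to supertype, the same holds for $p$, establishing condition (c).

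Consistency of $\tilde p$ will follow by elementary transfer along $\M\prec\frak{U}$: for each finite $\{\phi_i(x,a_i)^{\epsilon_i}\}\subseteq\tilde p$, the sentence
\[
\forall\bar y\Bigl(\bigwedge_i d_{p,\phi_i}(y_i)^{\epsilon_i}\to \exists x\bigwedge_i\phi_i(x,y_i)^{\epsilon_i}\Bigr)
\]
holds in $\M$, because whenever the antecedent is satisfied by $\bar b\in M_{\bar y}$ each $\phi_i(x,b_i)^{\epsilon_i}$ lies in $p$ by definability, and every finite subset of the type $p$ is realized in $\M$; the sentence then transfers to $\frak{U}$.

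The main technical step, and the one I expect to be the chief obstacle, is establishing wideness of $\tilde p$. The plan is to exploit the $\bigvee$-definability of $I$: the partial type $\neg Ix.\bigwedge_i\phi_i(x,y_i)^{\epsilon_i}=\{\theta_l(\bar y)\}_l$ captures over $\emptyset$ exactly the parameter tuples making the corresponding multi-parameter conjunction wide. Inside $\M$, every such conjunction drawn from $p$ is wide because $p$ is wide as a partial type, so for each $l$ the implication $\forall\bar y\bigl(\bigwedge_i d_{p,\phi_i}(y_i)^{\epsilon_i}\to\theta_l(\bar y)\bigr)$ holds in $\M$; these first-order sentences over $A_0\subseteq M$ then transfer to $\frak{U}$ via $\M\prec\frak{U}$, yielding wideness of the corresponding conjunctions in $\tilde p$. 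Care is required because $\neg Ix.-$ is a partial type rather than a single formula, so the wideness transfer must be carried out formula-by-formula across the $\bigvee$-definable schema.
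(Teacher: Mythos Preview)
Your proposal is correct and follows essentially the paper's approach: verify condition~(c) of the wide local character theorem using the definability of wide types afforded by Theorem~\ref{V-WUFT}(e), collecting the parameters of the defining formulas into a set $A_0$ of size at most $|T|$. The paper asserts in one line that $M_0$-definability of $p$ implies non-$I$-dividing over $M_0$; your explicit construction of the global $A_0$-definable extension $\tilde p$ and the formula-by-formula wideness transfer via the $\bigvee$-definable schema are precisely the details that justify that step (and the implicit appeal to Proposition~\ref{I-dividing basics}(v), which as stated applies only to global types).
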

\begin{proof}
Let $T$ be $I$-stable, $\M \models T$ $\omega$-saturated, and $p(x) \in S^I(\M)$. Then $p(x)$ is $M$-definable by Theorem \ref{V-WUFT}, so collecting together choices of parameters in $M$ for $d_p x.\phi(x,y)$ as $\phi(x,y)$ ranges over $\L_x$, we obtain a subset $M_0 \subset M$ with $|M_0|\leq |T|$ such that $p(x)$ is $M_0$-definable, and thus in particular does not $I$-divide over $M_0$. So $T$ satisfies wide local character over $\omega$-saturated models, which implies $T$ is $I$-simple in view of the preceding theorem.
\end{proof}
\subsection{Independence in $I$-simple theories}

\begin{definition}
Let $\b{a} = (a_i : i \in J)$ be a sequence in $\frak{U}$, $l(a_i) = l$. $\b{a}$ is \emph{$I$-independent from $B$ over $A$} if for each $i \in J$, 
\[ a_i \underset{A}{\anch^I} Ba_{<i}, \]
i.e. $\tp(a_i/ABa_{<i})$ does not $I$-fork over $A$. $\b{a}$ is an \emph{$I$-Morley sequence with respect to $B$ over $A$} if it is $I$-independent from $B$ over $A$ and $A$-indiscernible. If $B = \emptyset$, we omit ``with respect to $B$.''
\end{definition}

Note that $\b{a}$ is $I$-independent from $B$ over $A$ if and only if $\b{a}$ is $I$-independent from $AB$ over $A$. 

\begin{proposition} \label{existence of I-MS}
If $p(x) \in S^I(B)$ does not $I$-fork over $A$, there is an infinite sequence in $p(x)$ which is $I$-Morley sequence with respect to $B$ over $A$ and $B$-indiscernible. If $T$ is $I$-simple, then for every $p(x) \in S^I(A)$, there is an infinite $I$-Morley sequence over $A$ in $p(x)$.
\end{proposition}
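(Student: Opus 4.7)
The plan is to realize a long sequence of successive non-$I$-forking extensions of $p(x)$ and then extract an infinite $B$-indiscernible subsequence via the Erd\H{o}s--Rado--Shelah theorem. The key technical observation I will rely on is that non-$I$-forking over $A$ is inherited by restrictions of a partial type (if $\pi \subset \pi'$ and $\pi'$ does not $I$-fork over $A$, neither does $\pi$, since $\pi'$ implies every disjunction implied by $\pi$) and is preserved by $\Aut(\frak{U}/A)$, which holds because $I$ is $\Aut(\frak{U})$-invariant as a $\bigvee$-definable ideal schema.

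For the first claim, assume $p(x) \in S^I(B)$ does not $I$-fork over $A \subset B$, and let $\lambda$ be the cardinal provided by Erd\H{o}s--Rado--Shelah applied to the parameter set $B$. I would recursively construct a sequence $(a_\alpha : \alpha < \lambda)$ as follows: at stage $\alpha$, use part (iii) of the last lemma of the $I$-forking subsection to extend $p(x)$ to some $p_\alpha(x) \in S\left(B \cup \{a_\beta : \beta < \alpha\}\right)$ which does not $I$-fork over $A$. Any thin formula $\psi(x,c)$ $I$-divides over $\{c\}$ via the constant sequence, hence thin types $I$-fork; consequently every non-$I$-forking complete type is automatically $I$-wide, so $p_\alpha$ has a realization $a_\alpha \in \frak{U}_x$. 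By construction, $\tp(a_\alpha/Ba_{<\alpha})$ does not $I$-fork over $A$.

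Next, apply Erd\H{o}s--Rado--Shelah over $B$ to extract a $B$-indiscernible sequence $(a_i' : i \in \omega)$ such that for every $j_0 < \cdots < j_n$ there exist $\alpha_0 < \cdots < \alpha_n < \lambda$ with
\[
\tp(a_{\alpha_0}, \dots, a_{\alpha_n}/B) = \tp(a_{j_0}', \dots, a_{j_n}'/B).
\]
Because each $a_\alpha \models p$, the matching yields $a_i' \models p$ for every $i$. To verify $I$-independence with $B$ over $A$, fix $i$ and pick $\alpha_0 < \cdots < \alpha_i$ matching $(a_0', \ldots, a_i')$ over $B$; strong homogeneity produces $\sigma \in \Aut(\frak{U}/B)$ sending $(a_{\alpha_0}, \ldots, a_{\alpha_i})$ to $(a_0', \ldots, a_i')$, and hence sending $\tp(a_{\alpha_i}/Ba_{\alpha_0}\cdots a_{\alpha_{i-1}})$ to $\tp(a_i'/Ba_{<i}')$. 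The source is a restriction of $\tp(a_{\alpha_i}/Ba_{<\alpha_i})$, which does not $I$-fork over $A$; combining the monotonicity and automorphism-invariance observations above yields that $\tp(a_i'/Ba_{<i}')$ does not $I$-fork over $A$, as desired.

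For the second claim, assume $T$ is $I$-simple and $p(x) \in S^I(A)$. Corollary \ref{existence} gives that $p(x)$ does not $I$-fork over $A$, so the first part applied with $B := A$ produces an $A$-indiscernible sequence in $p(x)$ which is $I$-independent over $A$, i.e.\ an $I$-Morley sequence over $A$ in $p(x)$. I do not expect any genuine obstacle here; the only step requiring real care is the verification that non-$I$-forking is transported through the Erd\H{o}s--Rado--Shelah extraction, which as noted reduces to the monotonicity-and-$\Aut$-invariance observation recorded at the outset.
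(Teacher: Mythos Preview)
Your proposal is correct and follows essentially the same route as the paper's proof: build a long sequence of successive non-$I$-forking extensions of $p$ by transfinite recursion, extract a $B$-indiscernible sequence via Erd\H{o}s--Rado--Shelah, and observe that the extraction preserves both membership in $p$ and non-$I$-forking over $A$, then invoke Corollary~\ref{existence} for the second part. You are simply more explicit than the paper about why the recursion can proceed (the extension lemma plus the observation that non-$I$-forking types are automatically wide) and about why non-$I$-forking survives the extraction (monotonicity in the domain together with $\Aut(\frak{U}/A)$-invariance).
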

\begin{proof}
Straightforward translation of the proof of the parallel result for Morley sequences in simple theories; see \cite{tz}.
\end{proof}

\begin{lemma}
\label{WKL}
Let $T$ be $I$-simple. Let $\pi(x,y)$ be a partial type over $A$ and $\b{b} = (b_i : i \in \omega)$ an infinite $I$-Morley sequence over $A$. If $\bigcup_{i \in \omega} \pi(x,b_i)$ is $I$-wide, then $\pi(x,b_0)$ does not $I$-divide over $A$. 
\end{lemma}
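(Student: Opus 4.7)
The plan is to derive a contradiction with $I$-simplicity by exhibiting $(\phi(x, b_i))_{i \in \omega}$ as an infinite uniformly $\phi$-$k$-$I$-dividing sequence over $A$, forbidden by Proposition \ref{simple prop}.c. Preliminarily, by compactness and Proposition \ref{I-dividing basics}.ii, I reduce to the case $\pi(x,y) = \{\phi(x,y)\}$ for a single formula $\phi \in \L(A)$: if $\pi(x, b_0)$ implies a formula which $I$-divides over $A$, some finite $\pi_0 \subseteq \pi$ already does, so $\bigwedge \pi_0(x, y)$ serves as the desired $\phi(x, y)$, and the widness of $\bigcup_i \pi(x, b_i)$ passes to $\{\bigwedge \pi_0(x, b_i) : i \in \omega\}$. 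So I assume $\phi(x, b_0)$ $k$-$I$-divides over $A$ with witness $q(y_0, \ldots, y_{k-1}) \subseteq \neg Ix. \bigwedge_{j < k} \phi(x, y_j)$, while $\{\phi(x, b_i) : i \in \omega\}$ is $I$-wide.

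Condition (iii) of the definition of a uniformly dividing sequence (widness of the family) holds by the standing assumption, so the substantive task is condition (iv): for each $i \in \omega$, $\phi(x, b_i)$ should $k$-$I$-divide over $Ab_{<i}$ with witness $q$. I would derive this from the following sublemma: if $b \equiv_A b_0$, $\phi(x, b_0)$ $k$-$I$-divides over $A$ with witness $q$, and $b \underset{A}{\anch^I} B$, then $\phi(x, b)$ $k$-$I$-divides over $AB$ with witness $q$. Applying this with $b = b_i$ and $B = b_{<i}$, where the Morley hypothesis supplies $b_i \underset{A}{\anch^I} b_{<i}$, yields (iv).

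To prove the sublemma, I would first apply an $A$-automorphism to the original dividing witness to produce an $A$-indiscernible sequence $(c_j)_{j \in \omega}$ in $\tp(b/A)$ with $c_0 = b$ and $\{\phi(x, c_j)\}_j$ uniformly $k$-$I$-thin via $q$. Then I would transport $(c_j)_j$ to an $AB$-indiscernible sequence $(d_j)_j$ in $\tp(b/AB)$ preserving the dividing property. The transport uses Lemma \ref{WSL} together with the observation that in $I$-simple theories non-forking extensions of wide types remain wide: a thin formula in such an extension would $1$-$I$-divide over its own parameters (a subset of the extended base), contradicting the non-forking hypothesis together with the fact that non-forking over a set is preserved under enlarging that set. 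This supplies the widness hypothesis needed to invoke Lemma \ref{WSL}. Since $\neg \bigwedge q(y_0, \ldots, y_{k-1}) \in \EM((c_j)_j/\emptyset) \subseteq \EM((d_j)_j/AB)$, the transported sequence inherits the uniform $k$-$I$-thinness witness $q$, giving the desired dividing over $AB$.

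The main obstacle is the identification step: ensuring that $(d_j)_j$ actually lives in $\tp(b/AB)$, equivalently that $d_0 \equiv_{AB} b$, rather than merely some $A$-conjugate of this type. Symmetry of $I$-independence is not available---the paper lists it as open in its concluding problem---so I cannot freely exchange the roles of $b$ and $B$ in the transport. I would pin the base point down via a compactness argument over $AB$, exploiting the non-forking of $\tp(b/AB)$ over $A$ together with Lemma \ref{WSL}'s preservation of widness to supply a suitable $AB$-automorphism aligning $d_0$ with $b$. If that route resists direct treatment, an alternative is a tree construction that extracts the infinite uniformly dividing sequence directly from the Morley data by iterated application of wide local character.
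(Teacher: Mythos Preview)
Your strategy reduces the lemma to the sublemma that $k$-$I$-dividing of $\phi(x,b)$ over $A$ lifts to $k$-$I$-dividing over $AB$ whenever $b \anch^I_A B$. This is the right shape for a ``direct'' proof of Kim's lemma, but the argument you sketch for the sublemma does not go through. To make $(c_j)_j$ into an $AB$-indiscernible sequence via Lemma~\ref{WSL}, you must take the distinguished element $a$ there to be (an enumeration of) $B$, so that the output is $AB$-indiscernible; the wideness hypothesis then demands that $\tp(B/A\b{c})$ be $I$-wide. Your justification---that non-$I$-forking extensions of wide types remain wide---establishes only that $\tp(b/AB)$ is wide, which is the wrong type: the roles of $b$ and $B$ are reversed. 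Obtaining wideness of $\tp(B/A\b{c})$, or even of $\tp(B/Ab)$, from $b \anch^I_A B$ is precisely a form of symmetry, which the paper explicitly leaves open in its concluding problem. More generally, in the classical setting the statement ``dividing lifts along non-forking of the parameter'' is \emph{derived from} Kim's lemma rather than used to prove it, so absent an independent argument your route is circular. The ``identification step'' you flag is not a technicality to be patched by compactness; it is the whole difficulty.

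The paper's proof avoids this entirely. It stretches the $I$-Morley sequence to reverse order type $|T|^+$, takes a realization $d$ of $\bigcup_k \pi(x,c_k)$ with $\tp(d/A\b{c})$ wide, and applies wide local character to $\tp(d/A\b{c})$ to locate an index $i_0$ such that $d$ does not $I$-divide over $A$ together with the tail $\{c_i : i_0 <^* i\}$. Two applications of transitivity then show that $\pi(x,c_{i_0})$---and hence $\pi(x,b_0)$---does not $I$-divide over $A$. No lifting of dividing and no symmetry is used; only local character and transitivity, both of which follow directly from $I$-simplicity. Your fallback suggestion of ``iterated application of wide local character'' gestures toward the right ingredient, but the actual mechanism is a single application of local character to a sufficiently long sequence, not a tree construction.
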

\begin{proof}
Fix a linear order $K$; by the standard lemma, there is an $A$-indiscernible sequence $\b{c}$ in order type $K$ such that $\b{c} \models \EM(\b{b}/A)$. Note that since $\b{b}$ is $A$-indiscernible, for each $n$, the set $\EM_n(\b{b}/A)$ of $\L(A)$-formulas in the free variables $(y_i : i < n)$ satisfied by all increasing $n$-length subsequences of $\b{b}$ is a complete $n\cdot l(y)$-type. In particular, as $\models \neg Ix.\bigwedge_{j \in J_0} \pi(x,b_j)$ holds whenever $J_0 \subset J$ is finite, so too $\models \neg Ix.\wedge_{k \in K_0} \pi(x,c_k)$ whenever $K_0 \subset K$  is finite. Thus $\bigcup_{k \in K} \pi(x,c_k)$ is $I$-wide. We further claim that for each $k \in K$, 
\[ c_k \underset{A}{\anch^I} c_{<k}. \]
Indeed, assume $\tp(c_k/Ac_{<k})$ $I$-forks over $A$. Then $\tp(c_k/Ac')$ $I$-forks over $A$ for some finite $c' \subset c_{<k}$; let $l = l(c')$. Since $c'c_k$ and $b_{<l}b_l$ both satisfy \[ \EM_{l+1}(\b{b}/A) \in S_{(l+1)l(y)}(A), \]
$\tp(b_l/Ab_{<l})$ does not $I$-fork over $A$, contradicting the assumption that $\b{b}$ is $I$-independent over $A$.

Now let $K = (|T|^+,<^*)$ where $i <^* j \Longleftrightarrow j \in i$ for $i,j \in |T|^+$. The argument just given delivers a sequence $\b{c}$ which is $I$-Morley with respect to $b$ over $A$ in order type $K$ such that $\bigcup_{k\in K}\pi(x,c_k)$ is $I$-wide. So there is a realization $d \models \bigcup_{k\in K} \pi(x,c_k)$ such that $\tp(d/A\b{c})$ is wide. By wide local character, $\tp(d/A\b{c})$ does not $I$-divide over $A\left\{c : i \in i_0\right\} = A\left\{c_i : i_0 <^* i\right\}$ for some $i_0 \in |T|^+$. In particular, $d$ does not $I$-divide with $c_{i_0}$ over $A\left\{c_i : i_0 <^* i\right\}$. On the other hand, using the fact that $\b{c}$ is $I$-independent over $A$ along with an application of transitivity of non-$I$-dividing, we see $(c_i : i_0 <^* i)$ does not $I$-divide with $c_{i_0}$ over $A$. Therefore, by another application of transitivity, it follows that $(c_i : i_0 <^* i)^\frown d$ does not $I$-divide with $c_{i_0}$ over $A$. But $d \models \pi(x,c_{i_0})$, so $\pi(x,c_{i_0})$ does not $I$-divide over $A$. As $b_0 \equiv_A c_{i_0}$, we conclude that $\pi(x,b_0)$ does not $I$-divide over $A$, as desired.
\end{proof}

\begin{corollary} \label{I-forking=I-dividing}
Let $T$ be $I$-simple. Then $\pi(x,b)$ $I$-forks over $A$ if and only if $\pi(x,b)$ $I$-divides over $A$.
\end{corollary}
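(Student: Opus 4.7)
The plan is to prove the two directions separately. The direction $I$-dividing $\Rightarrow$ $I$-forking is immediate from the definition: a single formula that $I$-divides is a length-one disjunction of dividing formulas, so $\pi(x,b)$ $I$-forks trivially. The substantive content lies in the converse.

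For the converse, I would first reduce to a single formula: by Proposition \ref{I-dividing basics}(ii), $\pi(x,b)$ $I$-divides over $A$ iff some finite conjunction $\bigwedge \pi_0(x,b)$ does, so it suffices to assume $\pi = \psi$ is a single formula with $\psi(x,b) \vdash \bigvee_{l<d} \phi_l(x, c_l)$, each $\phi_l(x, c_l)$ $I$-dividing over $A$. Let $c := b \frown c_0 \frown \cdots \frown c_{d-1}$. Using Proposition \ref{existence of I-MS} (available by Corollary \ref{existence}, since $T$ is $I$-simple), I would extract an $A$-indiscernible $I$-Morley sequence $(c^{(i)})_{i \in \omega}$ over $A$ in $\tp(c/A)$, decomposed as $c^{(i)} = b^{(i)} \frown c_0^{(i)} \frown \cdots \frown c_{d-1}^{(i)}$. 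A short verification shows that each coordinate projection $(b^{(i)})$ and $(c_l^{(i)})$ remains an $I$-Morley sequence over $A$ in its respective type, using monotonicity of non-$I$-forking in both the parameter set and the variable tuple (together with the product behavior from Lemma \ref{V product props}).

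Next I would apply the contrapositive of Kim's lemma (Lemma \ref{WKL}) to each $\phi_l$: since $\phi_l(x, c_l^{(0)})$ $I$-divides over $A$ and $(c_l^{(i)})$ is Morley over $A$ in $\tp(c_l/A)$, the set $\{\phi_l(x, c_l^{(i)}) : i \in \omega\}$ must be $I$-thin, so there is some $N_l \in \omega$ with $\bigwedge_{i<N_l} \phi_l(x, c_l^{(i)})$ $I$-thin; $A$-indiscernibility lets us take this conjunction over any $N_l$-element subset of $\omega$. Fix $N := d \cdot \max_l N_l$. For each function $f : N \to d$, pigeonhole yields a value $l_*$ attained at least $N_{l_*}$ times on $f$, so $\bigwedge_{i < N} \phi_{f(i)}(x, c_{f(i)}^{(i)})$ is $I$-thin (it implies the $I$-thin $l_*$-homogeneous sub-conjunction). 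Taking the join over the finitely many $f : N \to d$ and applying distributivity,
\[ \bigwedge_{i < N} \psi(x, b^{(i)}) \vdash \bigwedge_{i < N} \bigvee_{l < d} \phi_l(x, c_l^{(i)}) \equiv \bigvee_{f : N \to d} \bigwedge_{i < N} \phi_{f(i)}(x, c_{f(i)}^{(i)}), \]
the right-hand side is a finite join of $I$-thin formulas and so lies in the ideal. Since $(b^{(i)})$ is $A$-indiscernible in $\tp(b/A)$ with $b^{(0)} \equiv_A b$, this witnesses that $\psi(x,b)$ $I$-divides over $A$ (after $A$-conjugation via Proposition \ref{I-dividing basics}(iii)).

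The main obstacle is the pigeonhole/distributivity bookkeeping: passing from a ``Kim's lemma for each disjunct'' conclusion to thinness of the mixed conjunction requires re-expressing the implication in a disjunctive normal form indexed by functions $N \to d$, and then checking that the ideal is closed under the finite join needed. A secondary technicality worth flagging is confirming that projections of $I$-Morley sequences remain $I$-Morley; this is where the hypothesis that $I$ is $\bigvee$-definable (so that products $I^{\otimes n}$ are well-behaved ideals via Lemma \ref{V product is V}) pays off, and without which the transfer of non-$I$-forking to sub-tuples would be more delicate.
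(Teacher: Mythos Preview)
Your argument is correct, but it takes a genuinely different route from the paper's. The paper works in the contrapositive: assuming $\psi(x,b) := \bigvee_{l<d}\phi_l(x,b)$ does \emph{not} $I$-divide, it fixes an $I$-Morley sequence $(b_i)_{i\in\omega}$ in $\tp(b/A)$ (with all parameters folded into $b$), notes that $\{\psi(x,b_i):i\in\omega\}$ is wide, chooses a wide realization $a$, and applies the infinite pigeonhole principle to the coloring $i\mapsto\min\{l:\models\phi_l(a,b_i)\}$ to find an infinite monochromatic $K$ and an index $l_0$ with $\{\phi_{l_0}(x,b_i):i\in K\}$ wide; a single application of Lemma~\ref{WKL} to the Morley subsequence indexed by $K$ then shows $\phi_{l_0}(x,b_0)$ does not $I$-divide. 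Your approach instead runs Kim's lemma in contrapositive form once per disjunct and then aggregates thinness via a finitary DNF/pigeonhole computation. This buys you an explicit $N$-$I$-dividing witness with $N=d\cdot\max_l N_l$, at the cost of the extra verification that coordinate projections of an $I$-Morley sequence remain $I$-Morley. That verification is valid (via Lemma~\ref{V product props} and parameter/variable monotonicity of non-$I$-forking, as you indicate), but it is also avoidable: you may apply Lemma~\ref{WKL} directly to the full Morley sequence $(c^{(i)})$ with the partial type $\phi_l(x,y_l)$, which only reads the $y_l$-coordinates of $y$, and thereby sidestep the projection issue entirely.
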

\begin{proof}
Suppose $\pi(x,b)$ does not $I$-divide over $A$. Assume 
\[\pi(x,b) \vdash \bigvee_{l < d} \phi_l(x,b) =: \psi(x,b)\]
(so $\psi(x,b)$ does not $I$-divide over $A$ by definition). We want to show $\phi_l(x,b)$ does not $I$-divide over $A$ for some $l < d$. Let $\b{b} = (b_i : i \in \omega)$ be an infinite $I$-Morley sequence over $A$ in $\tp(b/A)$. Since $\psi(x,b)$ does not $I$-divide over $A$ and $\b{b}$ is $A$-indiscernible, the type $\left\{\psi(x,b_i) : i \in \omega\right\}$ is wide. In view of Lemma \ref{WKL} and because $\b{b}$ is $I$-Morley over $A$, it suffices to show that there exist $l < d$ and an infinite $K \subset \omega$ such that $\left\{\phi_l(x,b_i) : i \in I\right\}$ is wide. Now 
\[ \left\{\bigvee_{l < d} \phi(x,b_i) : i \in \omega\right\} \]
is wide, so by saturation there is a point $a$ whose complete type $\tp(a/\b{b})$ over $\b{b}$ is wide. Consider the coloring $\omega \to d$ which assigns to $i$ the least $l < d$ such that $\models \phi_l(a,b_i)$. The pigeonhole principle delivers some $l_0 < d$ and an infinite $K \subset \omega$ such that $\models \phi_{l_0}(a,b_i)$ holds for all $i \in K$. Moreover, since $\tp(a/b{b})$ is wide, so is $\left\{\phi_{l_0}(x,b_i) : i \in K\right\}$, as required.
\end{proof}

As of writing this article, the author could not determine whether the assumption that $I$ is $\bigvee$-definable alone implies symmetry of non-$I$-forking independence. Accordingly, we conclude with a

\begin{problem}
Let $I$ be a schema of $\bigvee$-definable ideals for a complete theory $T$ and let $\frak{U}$ be a sufficiently saturated and strongly homogeneous model of $T$. Find necessary and sufficient conditions on $I$ which characterize when ${\anch^I}$-independence is symmetric: for all $a$, $b \in U^{<\omega}$ and small $C \subset U$, if $\tp(a/Cb)$ and $\tp(b/Ca)$ are $I$-wide, then 
\[a {\anch^I_C} b \hspace{0.25cm} \Longleftrightarrow \hspace{0.25cm} b {\anch^I_C} a.\]
\end{problem}

\printbibliography

\end{document}